\title[Propagation of Gabor singularities]{Propagation of Gabor singularities for Schr\"odinger equations with quadratic Hamiltonians}
\author[K. Pravda--Starov]{Karel Pravda--Starov}
\address{IRMAR, CNRS UMR 6625, Universit\'e de Rennes 1, Campus de Beaulieu, 263 avenue du G\'en\'eral Leclerc, CS 74205,
35042 Rennes cedex, France}
\email{karel.pravda-starov@univ-rennes1.fr}
\author[L. Rodino]{Luigi Rodino}
\address{Department of Mathematics, University of Turin, Via Carlo Alberto 10, 10123 Torino (TO), Italy}
\email{luigi.rodino@unito.it}
\author[P. Wahlberg]{Patrik Wahlberg}
\address{Department of Mathematics, Linn{\ae}us University, SE--35195 V{\"a}xj{\"o}, Sweden}
\email{patrik.wahlberg@lnu.se}
\numberwithin{equation}{section}          %Detta gr att man f�r
\newtheorem{thm}{Theorem}
\numberwithin{thm}{section}
\newcommand{\rubrik}{}
\newtheorem{prop}[thm]{Proposition}
\newtheorem{cor}[thm]{Corollary}
\newtheorem{lem}[thm]{Lemma}
\theoremstyle{definition}
\newtheorem{defn}[thm]{Definition}
\newtheorem{example}[thm]{Example}
\theoremstyle{remark}
\newtheorem{rem}[thm]{Remark}              %T o m hit r bara allmn
\newcommand{\ro}{\mathbb R}
\newcommand{\no}{\mathbb N}
\newcommand{\rr}[1]{\mathbb R^{#1}}
\newcommand{\nn}[1]{\mathbb N^{#1}}
\newcommand{\co}{\mathbb C}
\newcommand{\cc}[1]{\mathbb C^{#1}}
\newcommand{\Ker}{\operatorname{Ker}}
\newcommand{\Ran}{\operatorname{Ran}}
\newcommand{\ep}{\varepsilon}
\newcommand{\fy}{\varphi}
\newcommand{\supp}{\operatorname{supp}}
\newcommand{\eabs}[1]{\langle #1\rangle}
\newcommand{\Sp}{\operatorname{Sp}}
\newcommand{\ssp}{\operatorname{sp}}
\newcommand{\Mp}{\operatorname{Mp}}
\newcommand{\GL}{\operatorname{GL}}
\newcommand{\charac}{\operatorname{char}}
\newcommand{\conesupp}{\operatorname{conesupp}}
\newcommand{\cL}{\mathscr{L}}
\newcommand{\cS}{\mathscr{S}}
\newcommand{\cD}{\mathscr{D}}
\newcommand{\cF}{\mathscr{F}}
\newcommand{\cK}{\mathscr{K}}
\newcommand{\wh}{\widehat}
\newcommand{\re}{{\rm Re} \, }
\newcommand{\im}{{\rm Im} \, }
\newcommand{\J}{\mathcal{J}}
\def\la{\langle}
\def\ra{\rangle}
\begin{document}

\begin{abstract}
We study propagation of the Gabor wave front set for a Schr\"odinger equation with a Hamiltonian that is the Weyl quantization of a quadratic form with non-negative real part. 
We point out that the singular space associated to the quadratic form plays a crucial role for the understanding of this propagation. We show that the Gabor singularities of the solution to the equation for positive times are always contained in the singular space, and that they propagate in this set along the flow of the Hamilton vector field associated to the imaginary part of the quadratic form.
As an application we obtain for the heat equation a sufficient condition on the Gabor wave front set of the initial datum tempered distribution that implies regularization to Schwartz regularity for positive times. 
\end{abstract}

\keywords{Schr\"odinger equation, heat equation, propagation of singularities, Gabor wave front set, singular space.
MSC 2010 codes: 35A18, 35A21, 35Q40, 35Q79, 35S10}

\maketitle

%%%%%%%%%%%%%%%%%%%%%%
\section{Introduction}
%%%%%%%%%%%%%%%%%%%%%%

We study in this work evolution equations associated to quadratic operators. This class of operators stands for pseudodifferential operators
$$
q^w(x,D)u(x)  =(2\pi)^{-d} \int_{\rr{2d}} e^{i \la x-y, \xi \ra} q \left(\frac{x+y}{2},\xi \right) \, u(y) \, dy \, d\xi,
$$
defined by the Weyl quantization of complex-valued quadratic symbols $q(x,\xi)$ on the phase space $T^*\rr{d}$.
These operators are differential operators with simple and fully explicit expressions since the Weyl quantization of the quadratic symbol
$x^{\alpha} \xi^{\beta}$, with $(\alpha,\beta) \in \mathbb{N}^{2d}$, $|\alpha+\beta|=2$, is given by
$$(x^{\alpha} \xi^{\beta})^w=\textrm{Op}^w(x^{\alpha} \xi^{\beta})=\frac{x^{\alpha}D_x^{\beta}+D_x^{\beta} x^{\alpha}}{2},$$
with $D_x=i^{-1}\partial_x$. As the Weyl symbols are allowed to be complex-valued, it is a class of non-selfadjoint operators.

The maximal closed realization of a quadratic operator $q^w(x,D)$ on $L^2(\rr{d})$, that is, the operator equipped with the domain
$$\big\{u \in L^2(\rr{d}) : \ q^w(x,D)u \in L^2(\rr{d})\big\},$$
where $q^w(x,D)u$ is defined in the distribution sense, is known to coincide with the graph closure of its restriction to the Schwartz space~\cite{Hormander2} (pp.~425--426),
$$q^w(x,D) : \mathscr{S}(\rr{d}) \rightarrow \mathscr{S}(\rr{d}).$$
Classically, to any quadratic form defined on the phase space 
$$q : T^*\rr{d} \rightarrow \mathbb{C},$$
is associated a matrix $F \in \cc {2d \times 2d}$ called its Hamilton map, or its fundamental matrix, which is defined as the unique matrix satisfying the identity
$$\forall  (x,\xi) \in T^*\rr{d},\forall (y,\eta) \in T^*\rr{d}, \quad q((x,\xi),(y,\eta))=\sigma((x,\xi),F(y,\eta)),$$
with $q(\cdot,\cdot)$ the polarized form associated to the quadratic form $q$, where $\sigma$ stands for the standard symplectic form on $T^*\rr d$. This Hamiltonian matrix is given by $F=\J Q$ where 
\begin{equation*}
\J =
\left(
\begin{array}{cc}
0 & I_d \\
-I_d & 0
\end{array}
\right), 
\end{equation*}
and $Q \in \cc {2d \times 2d}$ is the symmetric matrix that defines $q$ by $q(x,\xi) = \la (x,\xi), Q(x,\xi) \ra$.

In~\cite{Hitrik1}, the notion of singular space was introduced by Hitrik and the first author by pointing out the existence of a particular vector subspace in the phase space $T^*\rr d$, which is intrinsically associated to a quadratic symbol $q$, and defined as the following finite intersection of kernels
\begin{equation}\label{h1}
S=\Big(\bigcap_{j=0}^{2d-1} \Ker \big[\re F(\im F)^j \big]\Big) \cap T^*\rr d \subseteq T^*\rr d, 
\end{equation}
where $\textrm{Re }F$ and $\textrm{Im }F$ stand for the real and imaginary parts of the Hamilton map $F$ associated to the quadratic symbol $q$,
$$\textrm{Re }F=\frac{1}{2}(F+\overline{F}), \quad \textrm{Im }F=\frac{1}{2i}(F-\overline{F}),$$
which are respectively the Hamilton maps of the quadratic forms $\re q$ and $\im q$.

As pointed out in \cite{Hitrik1,kps11,kps21,viola1}, the singular space is playing a basic role in understanding the spectral and hypoelliptic properties of non-elliptic quadratic operators, as well as the spectral and pseudospectral properties of certain classes of degenerate doubly characteristic pseudodifferential operators~\cite{kps3,kps4,viola}.

When the Weyl symbol of a quadratic operator $q^w(x,D)$ has a non-negative real part $\textrm{Re }q \geq 0$, this operator is maximal accretive and generates a contraction semigroup $(e^{-tq^w(x,D)})_{t \geq 0}$ on $L^2(\rr{d})$. In~\cite{Hitrik1}, the singular space was shown to play a key role in understanding the smoothing properties of semigroups generated by accretive quadratic operators. Indeed, it was shown in~\cite[Theorem~1.2.1]{Hitrik1} that the semigroup of a (possibly non-elliptic) accretive quadratic operator enjoys regularizing properties of Schwartz type for any positive time
$$\forall u_0 \in L^2(\rr{d}), \forall t>0, \quad e^{-tq^w(x,D)}u_0 \in \mathscr{S}(\rr{d}),$$
when its singular space is zero. This condition $S=\{0\}$ actually implies that the quadratic operator enjoys some subelliptic properties \cite[Theorem~1.2.1]{kps21}. It holds for instance for some kinetic operators as the Kramers--Fokker--Planck operator 
$$K=-\Delta_v+\frac{|v|^2}{4}+v\cdot\partial_x-\nabla V(x)\cdot\partial_v, \quad (x,v) \in \rr{2},$$
with a quadratic potential $V(x)=ax^2$, $a \in \mathbb{R} \setminus \{0\}$; some operators appearing in models of finite-dimensional Markovian approximation of the general Langevin equation, or in chains of oscillators coupled to heat baths~\cite[Section~4]{kps11}.

When the singular space $S$ is non-zero but still has a symplectic structure in the sense that the restriction of the canonical symplectic form to the singular space $\sigma|_{S}$ is non-degenerate, \cite[Theorem~1.2.1]{Hitrik1} shows more generally that the semigroup enjoys some smoothing properties of Schwartz type but only in the directions given by the orthogonal complement of the singular space in $T^*\rr d$ with respect to the symplectic form, 
$$
S^{\sigma \perp}=\{(x,\xi) \in T^*\rr d : \ \forall (y,\eta) \in S, \ \sigma((x,\xi),(y,\eta))=0\}. 
$$
This means that for all $t>0$, $N \in \no$ and $u \in L^2(\rr d)$,
$$(1+|x'|^2+|D_{x'}|^2)^Ne^{-t q^w(x,D)}u \in L^2(\rr d),$$ 
if $(x',\xi')$ are some linear symplectic coordinates on the symplectic space $S^{\sigma \perp}$.

In the present work, we study the general case when the singular space of a quadratic operator does not enjoy necessarily a symplectic structure. We aim at understanding what remains of the Schwartz type regularizing properties of the semigroup $(e^{-tq^w(x,D)})_{t \geq 0}$, and how the lack of Schwartz regularity of the initial datum may propagate in time.

We consider the initial value Cauchy problem for Schr\"odinger equations of the form
\begin{equation}\label{cp}
\left\{
\begin{array}{rl}
\partial_t u(t,x) + q^w(x,D) u (t,x) & = 0, \qquad t \geq 0, \quad x \in \rr d, \\
u(0,\cdot) & = u_0,  
\end{array}
\right.
\end{equation}
where the initial datum $u_0$ is more generally a tempered distribution on $\rr d$ and $q^w(x,D)$ is a quadratic operator whose Weyl symbol has a non-negative real part $\textrm{Re }q \geq 0$. The family of equations (\ref{cp}) includes for example the proper Schr\"odinger equation when $q(x,\xi)= i |\xi|^2$, or the heat equation if $q(x,\xi) = |\xi|^2$.

We aim at studying how the Gabor wave front set $WF(u_0)$ of the initial datum $u_0$ is propagated to the solution $u(t,\cdot)$ at positive time $t>0$. The Gabor wave front set (or singularities) was introduced by H\"ormander \cite{Hormander1} and measures the directions in the phase space in which a tempered distribution does not behave like a Schwartz function. It is hence empty if and only if a distribution that is a priori tempered is in fact a Schwartz function. The Gabor wave front set thus measures global regularity in the sense of both smoothness and decay at infinity. 

Our starting point is the case of a purely imaginary-valued quadratic form 
$q=i\textrm{Im }q.$ 
It corresponds to the case when the singular space is equal to the whole phase space $T^*\rr{d}$. With $e^{- t q^w(x,D)}$ the solution operator, it is then a known consequence of the metaplectic representation (see e.g. \cite{Cordero5}) that there is exact propagation of Gabor singularities as follows 
\begin{equation*}
WF(e^{- t q^w(x,D)} u_0) = e^{-2 i t F} WF(u_0), \qquad t \in \ro, \ u_0 \in \cS'(\rr d), 
\end{equation*}
where $F$ stands for the Hamilton map of $q$. We notice in this case that there is no regularizing effect of Schwartz type and that the Gabor singularities propagate along the curves given by the mapping $t \mapsto e^{-2 i t F}$, which corresponds to the flow of the Hamilton vector field 
$$H_{\textrm{Im}q}=\frac{\partial \im q}{\partial \xi} \cdot \frac{\partial}{\partial_x}-\frac{\partial \im q}{\partial x} \cdot \frac{\partial}{\partial_{\xi}},$$
associated to the imaginary part of the symbol
$$\forall t \in \mathbb{R},  \quad \exp(tH_{\textrm{Im}q})=e^{2t \im F}=e^{-2 i t F}.$$

The main result in this article is Theorem~\ref{th1}. When the Weyl symbol of a quadratic operator $q^w(x,D)$ has a non-negative real part $\textrm{Re }q \geq 0$, this result shows that all the Gabor singularities outside the singular space are smoothed out for any positive time $t>0$. More specifically, it proves that the following microlocal inclusions of the Gabor wave front sets  
\begin{align*}
WF\big(e^{-tq^w(x,D)}u_0\big) \subseteq \left( \exp(tH_{\textrm{Im}q})\big(WF(u_0) \cap S\big) \right) \cap S,
\end{align*}
hold for any positive time $t>0$. These microlocal inclusions point out that the possible Gabor singularities of the solution are localized in the singular space and as in the first case, do propagate along the curves given by the flow of the Hamilton vector field $H_{\textrm{Im}q}$ associated to the imaginary part of the symbol.

In Corollary~\ref{co1} we notice further that in the particular case when the Weyl symbol of the operator satisfies the null Poisson bracket condition $\{q, \overline q \}=0$ then the matrices $\re F$ and $\im F$ commute and the result of Theorem~\ref{th1} simply reads as the following microlocal inclusion
\begin{equation*}
WF(e^{- t q^w(x,D)}u_0) 
\subseteq  \left(  \exp(tH_{\textrm{Im}q})\big( WF(u_0) \cap \Ker ( \re F) \big) \right) \cap \Ker (\re F),
\end{equation*}
holds for any positive time $t>0$.

For the heat equation 
\begin{equation*}
\partial_t u(t,x) - \Delta_x u(t,x) = 0, \quad t \geq 0, \quad x \in \rr d, 
\end{equation*}
we have $\{q, \overline q \}=0$ and $\Ker (\re F) = \rr d \times \{0\}$, so the latter corollary gives
\begin{equation}\label{Gaborpropheat}
WF(e^{-t q^w(x,D)} u_0) 
\subseteq  WF(u_0) \cap ( \rr d \times \{ 0 \} ), \quad t > 0, \quad u \in \cS'(\rr d). 
\end{equation}
The heat equation is thus immediately globally regularizing
\begin{equation*}
e^{- t q^w(x,D)} u_0 \in \cS(\rr d), \quad t > 0, 
\end{equation*}
provided the Gabor wave front set of the initial datum $u_0 \in \cS'(\rr d)$ satisfies
\begin{equation*}
WF(u_0) \cap ( \rr d \times \{ 0 \} ) = \emptyset. 
\end{equation*}
Note that 
\begin{equation*}
\rr d \setminus \{ 0 \} \times \{ 0 \} = WF( 1 ). 
\end{equation*}
The heat propagator is thus immediately regularizing if $WF(u_0)$ is disjoint from $WF(1)$. 

Our results on the propagation of the Gabor wave front set are proved by a combination of the following ingredients. 
We rely heavily on H\"ormander's results \cite{Hormander1,Hormander2} concerning the Gabor wave front set, and the theory of Gaussian oscillatory integrals and their associated positive Lagrangians in $T^* \cc d$. 
This class of oscillatory integrals have the form
\begin{equation*}
u(x) = \int_{\rr N} e^{i p(x,\theta)} d \theta, \quad x \in \rr d,
\end{equation*}
where $p$ is a quadratic form with certain properties, among which $\im p \geq 0$. 
Each such oscillatory integral is associated with a positive Lagrangian in $T^* \cc d$, uniquely determined by $u$ up to multiplication with $\co \setminus 0$. 

One basic building block for our results is the fact that the solution operator $e^{-t q^w(x,D)}$ of the Schr\"odinger equation has a Schwartz kernel that is an oscillatory integral associated to a positive Lagrangian $\lambda \subseteq T^* \cc {2d}$ that is  defined in terms of $F$. This theory is developed in \cite{Hormander2}. 

Another essential component is the microlocal inclusion from \cite{Hormander1} for the Gabor wave front set, that reads roughly
\begin{equation}\label{WFpropbasic}
WF (\cK u) \subseteq WF(K)' \circ WF(u)
\end{equation}
for a linear operator $\cK: \cS (\rr d) \mapsto \cS' (\rr d)$ with Schwartz kernel $K \in \cS' (\rr {2d})$, expressed with the twisted Gabor wave front set $WF(K)'$ of the Schwartz kernel, where the fourth coordinate is reflected. 

The basic idea of our proofs is to combine these two building blocks with the following result that we prove (see Theorem \ref{WFinclusion}): The Gabor wave front set of an oscillatory integral on $\rr d$ is contained in the corresponding positive Lagrangian intersected with $T^* \rr d$. This gives information on the Gabor wave front set of the Schwartz kernel of the propagator $\cK=e^{- t q^w(x,D)}$, which when inserted into \eqref{WFpropbasic} gives results on propagation of Gabor singularities. 

We note that the recent papers and preprints \cite{Cordero5,Cordero6,Nicola2} treat propagation of Gabor-type wave front sets for various classes of Schr\"odinger equations. These classes cater for the presence of potentials with certain properties and symbols $q$ of greater generality than quadratic forms, but they do not admit the quadratic form $q$ to have a nonzero real part. 
We also note the recent preprint \cite{Aleman1} containing interesting ideas on Schr\"odinger equations of more general type than we study. These ideas are different from those explored in this paper.  

The papers \cite{Weinstein1} by Weinstein and \cite{Zelditch1} by Zelditch are interesting references for propagation of singularities for Schr\"odinger equations. In these papers the propagation of the classical $C^\infty$ wave front set (cf. \cite[Chapter 8]{Hormander0}) is studied, and in \cite{Weinstein1} also a notion called metawavefront set, that is not the same as the Gabor wave front set although they have some features in common. For the classical wave front set, it is shown that the propagator for the harmonic oscillator with a perturbation under certain circumstances smooths out singularities except at periodically recurring time points when the singularities reappear. 

These works deal with Schr\"odinger equations with real-valued Hamiltonians. We consider here Schr\"odinger equations with complex-valued Hamiltonians. Our main result actually displays how non-trivial interplays due to the possible non-commutativity between the real and imaginary parts of the Hamiltonian matrix, accounting for the structure of the singular space, imply Schwartz smoothing properties of Schr\"odinger equations with complex-valued Hamiltonians. 

The paper is organized in the following manner. Section \ref{prelim} fixes notation and introduces background results. 
In Section \ref{formulation}, the Schr\"odinger equation is formulated and we briefly discuss the semigroup theory underlying the construction of the solution operator. 
Section \ref{oscintsec} treats H\"ormander's theory of oscillatory integrals with quadratic phase functions, which is a subspace of distributions designated Gaussians. We give a fairly detailed (and somewhat redundant to \cite{Hormander2}) treatment of this theory.
In particular we prove a parametrization of positive Lagrangians in $T^* \cc d$ which is needed for  
the crucial Theorem \ref{WFinclusion}. 

In Section \ref{kernelsec}, we connect Theorem \ref{WFinclusion} with H\"ormander's result \cite{Hormander2} on the Schwartz kernel of the Schr\"odinger propagator to give an inclusion for the Gabor wave front set of the Schwartz kernel of the propagator (Theorem \ref{WFkernel}). 

In Section \ref{generalcasesec}, we prove Theorem \ref{th1} concerning the propagation of Gabor singularities
in terms of the singular space and $\im F$.
We also state Corollary \ref{co1} which is a consequence of Theorem \ref{th1} and the additional assumption $\{q, \overline q \}=0$. 

In Section \ref{secparticulareq}, we give a variety of examples of equations and how they propagate the Gabor singularities. 

Finally in Section \ref{secexamples}, we give some examples of tempered distributions on $\ro$ and their Gabor wave front sets. One of these examples, which is in fact the Airy function, shows that the heat equation microlocal inclusion \eqref{Gaborpropheat} may be strict.

%%%%%%%%%%%%%%%%%%%%%%%
\section{Preliminaries}\label{prelim}
%%%%%%%%%%%%%%%%%%%%%%%

The gradient of a function $f$ with respect to the variable $x \in \rr d$ is denoted by $f'_x$ and
the Hessian with respect to $x,y$ is denoted $f_{x y}''$. 
The gradient operator is denoted $\partial/\partial x$. 
The Fourier transform of $f \in \cS(\rr d)$ (the Schwartz space) is normalized as
$$
\mathscr{F} f(\xi) = \wh f(\xi) = \int_{\rr d} f(x) e^{- i \la x, \xi \ra} dx,
$$
where $\la x, \xi \ra = x \cdot \xi = \sum_{j=1}^d x_j \xi_j$ denotes the inner product on $\rr d$. 
The same notation is used for the inner product on $\cc d$ which is linear in both variables (note that it is not sesquilinear). 

The Japanese bracket is $\eabs{x} = (1+|x|^2)^{1/2}$ for $x \in \rr d$. 
A closed ball in $\rr d$ of radius $\ep>0$ is denoted $B_\ep (\rr d)= \{ x \in \rr d: \, |x| \le \ep \}$.
For a matrix $A$ with real or complex entries $A \geq 0$ means positive semidefinite and $A > 0$ means positive definite. 
The transpose of $A$ is $A^t$. 
In estimates we denote by $f (x) \lesssim g (x)$ that $f(x) \leq C g(x)$ holds for some constant $C>0$ and all $x$ in the domain of $f$ and $g$. 

We denote the translation operator by $T_x f(y)=f(y-x)$, the modulation operator by $M_\xi f(y)=e^{i \la y, \xi \ra} f(y)$, $x,y,\xi \in \rr d$, and the
phase space translation operator by $\Pi(z) = M_\xi T_x$, $z=(x,\xi) \in \rr {2d}$. 
Given a window function $\varphi \in \cS(\rr d) \setminus \{ 0 \}$, the short-time Fourier transform (STFT) (cf. \cite{Grochenig1}) of $f \in \cS'(\rr d)$ is defined by
\begin{equation}\nonumber
V_\varphi f(z) = ( f, \Pi(z) \varphi ), \quad z \in \rr {2d},
\end{equation}
where $(\cdot,\cdot)$ denotes the conjugate linear action of $\cS'$ on $\cS$,
consistent with the inner product $(\cdot,\cdot)_{L^2}$ which is conjugate linear in the second argument.
The function $z \mapsto V_\varphi f(z)$ is smooth and its modulus is bounded by $C \eabs{z}^k$ for all $z \in \rr {2d}$ for some $C,k \geq 0$.
If $\varphi \in \cS(\rr d)$, $\| \varphi \|_{L^2}=1$ and $f \in \cS'(\rr d)$,
the STFT inversion formula reads (cf. \cite[Corollary 11.2.7]{Grochenig1})
\begin{equation}\label{STFTrecon}
(f,g ) = (2 \pi)^{-d} \int_{\rr {2d}} V_\varphi f(z) ( \Pi(z) \varphi,g ) \, dz, \quad g \in \cS(\rr d).
\end{equation}

The Weyl quantization (cf. \cite{Folland1,Hormander0,Shubin1}) is the map from a symbol $a$ to an operator defined by
\begin{equation}\nonumber
a^w(x,D) f(x) = (2 \pi)^{-d} \int_{\rr {2d}} e^{i \la x-y,\xi \ra} a \left( \frac{x+y}{2},\xi \right)  \, f(y) \, dy \, d \xi
\end{equation}
for $a \in \cS(\rr {2d})$ and $f \in \cS(\rr d)$. The latter conditions can be relaxed in various ways.
By the Schwartz kernel theorem, any continuous linear operator $\cS(\rr d) \mapsto \cS'(\rr d)$
can be written as the Weyl quantization for a unique symbol $a \in \cS'(\rr {2d})$.

\begin{defn}\label{shubinclasses1}\cite{Shubin1}
For $m\in \ro$, the Shubin symbol class $G^m$ is the subspace of all
$a \in C^\infty(\rr {2d})$ such that for every
$\alpha,\beta \in \nn d$ 
\begin{equation}\label{symbolestimate1}
|\partial_x^\alpha \partial_\xi^\beta a(x,\xi)| 
\lesssim \langle (x,\xi) \rangle^{m-|\alpha|-|\beta|}, \quad (x,\xi)\in \rr {2d}, 
\end{equation}
and $G^m$ is a Fr\'echet space with respect to the seminorms 
\begin{equation*}
\sup_{(x,\xi) \in \rr {2d}} \la (x,\xi) \ra^{-m+|\alpha|+|\beta|} \left| \partial_x^\alpha \partial_\xi^\beta a(x,\xi) \right|, \quad (\alpha,\beta) \in \nn {2d}.
\end{equation*}
\end{defn}

\begin{defn}\label{hormanderclasses}\cite{Hormander0}
For $m\in \ro$, $0 \leq \rho \leq 1$, $0 \leq \delta < 1$, the H\"ormander symbol class $S_{\rho,\delta}^m$ is the subspace of all
$a \in C^\infty(\rr {2d})$ such that for every
$\alpha,\beta \in \nn d$ 
\begin{equation}\label{symbolestimate2}
|\partial_x^\alpha \partial_\xi^\beta a(x,\xi)| 
\lesssim \eabs{\xi}^{m - \rho|\beta| + \delta |\alpha|}, \quad (x,\xi)\in \rr {2d}, 
\end{equation}
and $S_{\rho,\delta}^m$ is a Fr\'echet space with respect to the seminorms 
\begin{equation*}
\sup_{(x,\xi) \in \rr {2d}} \eabs{\xi}^{-m + \rho|\beta| - \delta |\alpha|} \left| \partial_x^\alpha \partial_\xi^\beta a(x,\xi) \right|, \quad (\alpha,\beta) \in \nn {2d}.
\end{equation*}
\end{defn}

The following definition involves conic sets in the phase space $T^* \rr d \simeq \rr {2d}$. 
A set is conic if it is invariant under multiplication with positive reals. 
Note the difference to the frequency-conic sets that are used in the definition of the (classical) $C^\infty$ wave front set \cite{Hormander0}. 

\begin{defn}\label{noncharacteristic2}
Given $a \in G^m$, a point in the phase space $z_0 \in T^* \rr d \setminus \{ (0,0) \}$ is called non-characteristic for $a$ provided there exist $A,\ep>0$ and an open conic set $\Gamma \subseteq T^* \rr d \setminus \{ (0,0) \}$ such that $z_0 \in \Gamma$ and
\begin{equation}\label{noncharlowerbound2}
|a(z )| \geq \ep \eabs{z}^m, \quad z \in \Gamma, \quad |z| \geq A.
\end{equation}
\end{defn}

The Gabor wave front set is defined as follows. 

\begin{defn}\label{wavefront1}
\cite{Hormander1}
If $u \in \cS'(\rr d)$ then the Gabor wave front set $WF(u)$ is the set of all $z \in T^*\rr d \setminus \{ (0,0) \}$ such that $a \in G^m$ for some $m \in \ro$ and $a^w(x,D) u \in \cS$ implies that $z \in \charac(a)$.
\end{defn}

According to \cite[Proposition 6.8]{Hormander1} and \cite[Corollary 4.3]{Rodino1}, the Gabor wave front set can be characterized microlocally by means of the STFT as follows. 

If $u \in \cS'(\rr d)$ and $\varphi \in \cS(\rr d) \setminus \{0 \}$ then $z_0 \in T^*\rr d \setminus \{ (0,0) \}$ satisfies $z_0 \notin WF(u)$ if and only if there exists an open conic set $\Gamma_{z_0} \subseteq T^*\rr d \setminus \{ (0,0) \}$ containing $z_0$ such that
\begin{equation*}
\sup_{z \in \Gamma_{z_0}} \eabs{z}^N |V_\varphi u(z)| < +\infty, \quad \forall N \geq 0.
\end{equation*}

The Gabor wave front set has properties the most important of which includes the following list. 

\begin{enumerate}

\item If $u \in \cS'(\rr d)$ then $WF(u) = \emptyset$ if and only if $u \in \cS (\rr d)$ \cite[Proposition 2.4]{Hormander1}.

\item If $u \in \cS'(\rr d)$ and $a \in G^m$ then
\begin{align*}
WF( a^w(x,D) u) 
& \subseteq WF(u) \cap \conesupp (a) \\
& \subseteq WF( a^w(x,D) u) \ \cup \ \charac (a). 
\end{align*}
Here the conic support $\conesupp (a)$ of $a \in \cD'(\rr n)$  is the set of all
$x \in \rr n \setminus \{ 0 \}$ such that any conic open set $\Gamma_x \subseteq \rr n \setminus \{ 0 \}$ containing $x$ satisfies:
\begin{equation*}
\overline{\supp (a) \cap \Gamma_x} \quad \mbox{is not compact in} \quad \rr n.
\end{equation*}

\item If $a \in S_{0,0}^0$ and $u \in \cS'(\rr d)$ then by \cite[Theorem 5.1]{Rodino1}
\begin{equation}\label{microlocal2}
WF(a^w(x,D) u) \subseteq WF(u).
\end{equation}
In particular $WF(\Pi(z) u) = WF(u)$ for any $z \in \rr {2d}$.  

\end{enumerate}

As three basic examples of the Gabor wave front set we have (cf. \cite[Example 6.4--6.6]{Rodino1})
\begin{equation}\label{example1}
WF(\delta_x) = \{ 0 \} \times (\rr d \setminus \{ 0 \}), \quad x \in \rr d,  
\end{equation}
\begin{equation}\label{example2}
WF(e^{i \la \cdot,\xi \ra}) = (\rr d \setminus \{0\}) \times  \{ 0 \}, \quad \xi \in \rr d, 
\end{equation}
and 
\begin{equation}\label{example3}
WF(e^{i \la x, A x \ra/2 } ) = \{ (x, Ax): \, x \in \rr d \setminus 0 \}, \quad A \in \rr {d \times d} \quad \mbox{symmetric}. 
\end{equation}

A symplectic vector space over $\ro$ or $\co$ is an even-dimensional vector space equipped with a nondegenerate antisymmetric bilinear form. 
Two basic examples are $T^* \rr d$ and $T^* \cc d$, both equipped with the canonical symplectic form 
\begin{equation}\label{cansympform}
\sigma((x,\xi), (x',\xi')) = \la x' , \xi \ra - \la x, \xi' \ra
\end{equation}
where $(x,\xi), (x',\xi') \in T^* \rr d$ or $(x,\xi), (x',\xi') \in T^* \cc d$ respectively. 
Using the matrix 
$$
\J =
\left(
\begin{array}{cc}
0 & I_d \\
-I_d & 0
\end{array}
\right) \in \rr {2d \times 2d}
$$
the symplectic form can be expressed with the inner product on $\rr {2d}$ or $\cc {2d}$ as
$$
\sigma((x,\xi), (x',\xi')) = \la \J (x,\xi), (x',\xi') \ra. 
$$

A Lagrangian (subspace) $\lambda \subseteq V$ of a symplectic vector space $V$ of dimension $2d$ with symplectic form $\sigma$ satisfies 
\begin{equation*}
\lambda = \{ X \in V: \, \sigma(X,Y) = 0 \ \forall Y \in \lambda \}  
\end{equation*}
and has dimension $d$. 
Thus $\sigma(X,Y) = 0$ when $X,Y \in \lambda$. 

The real symplectic group $\Sp(d,\ro)$ consists of all matrices $\chi \in \GL( 2 d, \ro)$ that satisfy
\begin{equation}\label{symplecticgroup}
\sigma(\chi z,\chi z') = \sigma(z,z')
\end{equation}
for all $z,z' \in T^* \rr {d}$. 
The complex symplectic group $\Sp(d,\co)$ consists of all matrices $\chi \in \GL( 2 d, \co)$ that satisfy \eqref{symplecticgroup} for all $z,z' \in T^* \cc {d}$. 

To each symplectic matrix $\chi \in \Sp(d,\ro)$ is associated a unitary operator $\mu(\chi)$ on $L^2(\rr d)$, determined up to a complex factor of modulus one, such that
\begin{equation}\label{symplecticoperator}
\mu(\chi)^{-1} a^w(x,D) \, \mu(\chi) = (a \circ \chi)^w(x,D), \quad a \in \cS'(\rr {2d})
\end{equation}
(cf. \cite{Folland1,Hormander0}).
The operator $\mu(\chi)$ is an homeomorphism on $\cS$ and on $\cS'$.

The mapping $\Sp(d,\ro) \ni \chi \mapsto \mu(\chi)$ is called the metaplectic representation \cite{Folland1,Taylor1}.
It is in fact a representation of the so called $2$-fold covering group of $\Sp(d,\ro)$, which is called the metaplectic group and denoted $\Mp(d,\ro)$.
The metaplectic representation satisfies the homomorphism relation only modulo a change of sign:
\begin{equation*}
\mu( \chi \chi') = \pm \mu(\chi ) \mu(\chi' ), \quad \chi, \chi' \in \Sp(d,\ro).
\end{equation*}

According to \cite[Proposition~2.2]{Hormander1} the Gabor wave front set is symplectically invariant as follows
for $u \in \cS'(\rr d)$,
\begin{equation*}
(x,\xi) \in WF(u) \quad \Longleftrightarrow \quad \chi(x,\xi) \in WF( \mu(\chi) u) \quad \forall \chi \in \Sp(d, \ro),
\end{equation*}
or, in short,
\begin{equation}\label{symplecticinvarianceWF}
WF( \mu(\chi) u) = \chi WF(u), \quad \chi \in \Sp(d, \ro), \quad u \in \cS'(\rr d).
\end{equation}

%%%%%%%%%%%%%%%%%%%%%%%%%%%
\section{Problem formulation and the solution operator}
\label{formulation}
%%%%%%%%%%%%%%%%%%%%%%%%%%%

We study the initial value problem for a Schr\"odinger equation of the form
\begin{equation}\label{schrodeq}
\left\{
\begin{array}{rl}
\partial_t u(t,x) + q^w(x,D) u (t,x) & = 0, \\
u(0,\cdot) & = u_0, 
\end{array}
\right.
\end{equation}
with $u_0 \in \cS'(\rr d)$, $t \geq 0$, $x \in \rr d$, where $q^w(x,D)$ is a quadratic operator whose Weyl symbol 
$$q(x,\xi)=\langle (x,\xi),Q(x,\xi) \rangle, \quad (x,\xi) \in T^*\rr{d},$$
has a non-negative real part $\textrm{Re }Q \geq 0$.
The equation \eqref{schrodeq} is solved for $u_0 \in L^2(\rr d)$ and $t \geq 0$ by 
$$
u(t,x) = e^{-t q^w(x,D)} u_0(x),
$$
where the solution operator, or propagator, $e^{- t q^w(x,D)}$ is the contraction semigroup that is generated by the operator $-q^w(x,D)$. 
A contraction semigroup is a strongly continuous semigroup with $L^2$ operator norm  $\leq 1$ for all $t \geq 0$ \cite{Yosida1}. 
The fact that $-q^w(x,D)$, or more precisely its closure $M_{-q}$ as an unbounded linear operator in $L^2(\rr d)$, generates such a semigroup is proved in \cite[pp. 425--26]{Hormander2}. The conclusion follows from the fact that $M_{- q}$ and its adjoint $M_{-\overline q}$ are dissipative operators \cite{Yosida1}. This means for $M_{-q}$ that
$$
\re (M_{-q} u,u) = (M_{-\re q} u,u) \leq 0, \quad u \in D(M_{-q}), 
$$
which follows from the assumption $\re Q \geq 0$. 
Here $D(M_{-q})$ denotes the domain of $M_{-q}$. On the other hand, it was shown in~\cite{Hormander2} (Proposition~5.8 and Theorem~5.12) that the solution operator $e^{-tq^w(x,D)}$ is a Fourier integral operator defining a continuous mapping on the Schwartz space
$$e^{-tq^w(x,D)}: \mathscr{S}(\rr{d}) \rightarrow \mathscr{S}(\rr{d}),$$ 
which can be extended by duality as a continuous mapping on the space of tempered distributions
$$e^{-tq^w(x,D)}: \mathscr{S}'(\rr{d}) \rightarrow \mathscr{S}'(\rr{d}).$$ 
We are interested in the propagation of the Gabor wave front set for the Schr\"odinger propagator $e^{-t q^w(x,D)}$, that is, we want to find microlocal inclusions for the Gabor wave front set
$$
WF(e^{-t q^w(x,D)} u_0)
$$
in terms of $WF(u_0)$, $F$ and $t \geq 0$, for $u_0 \in \cS'(\rr d)$. 

If $\re Q=0$ then exact propagation is given by means of the metaplectic representation. In fact, if 
$\re Q=0$ then $e^{-t q^w(x,D)}$ is a group of unitary operators, and we have by \cite[Theorem 4.45]{Folland1}
$$
e^{- t q^w(x,D)} = \mu(e^{-2 i t F}), \quad t \in \ro. 
$$
Note that $F$ is purely imaginary in this case and $i F \in \ssp(d,\ro)$, the symplectic Lie algebra, which implies that $e^{-2 i t F} \in \Sp(d,\ro)$ for any $t \in \ro$ \cite{Folland1}.
According to \eqref{symplecticinvarianceWF} we thus have 
\begin{equation}\label{realcase}
WF(e^{- t q^w(x,D)} u_0) = e^{- 2 i t F} WF(u_0), \quad t \in \ro, \ u_0 \in \cS'(\rr d). 
\end{equation}

%%%%%%%%%%%%%%%%%%%%%%%%%%%%%%%%%%%%%%%%%%%%%%%%
\section{Oscillatory integrals, Gaussians, associated Lagrangians and the Gabor wave front set}
\label{oscintsec}
%%%%%%%%%%%%%%%%%%%%%%%%%%%%%%%%%%%%%%%%%%%%%%%%

In this section we give an account of a class of oscillatory integrals with quadratic phase functions and trivial amplitudes introduced by H\"ormander \cite{Hormander2}. 
To give a coherent picture we rededuce some of the results in \cite{Hormander2}. 
The main novelty is Theorem \ref{WFinclusion} which is an inclusion for the Gabor wave front set of an oscillatory integral. 
This class of oscillatory integrals will be useful in Section \ref{kernelsec}, since H\"ormander has proved that the Schwartz kernel of the propagator for the Schr\"odinger equation belongs to this class. 

The oscillatory integrals have the form 
\begin{equation}\label{oscillint1}
u(x) = \int_{\rr N} e^{i p(x,\theta)} \, d \theta, \quad x \in \rr d, 
\end{equation}
where $p$ is a complex-valued quadratic form on $\rr {d + N}$. 
Thus 
\begin{equation}\label{pform}
p(x,\theta) = \la (x, \theta), P (x, \theta) \ra, \quad x \in \rr d, \quad \theta \in \rr N, 
\end{equation}
where $P \in \cc {(d+N) \times (d+N)}$ is symmetric. 
We write the matrix $P$ as
\begin{equation}\label{Pmatrix}
P=\left(
\begin{array}{ll}
P_{xx} & P_{x \theta} \\
P_{\theta x} & P_{\theta \theta} 
\end{array}
\right)
\end{equation}
where $P_{xx} \in \cc {d \times d}$, $P_{x \theta} \in \cc {d \times N}$, $P_{\theta \theta} \in \cc {N \times N}$, and $P_{\theta x} = P_{x \theta}^t$. 

Provided the following two conditions hold for the matrix $P$, we call it a Gaussian generator matrix, for reasons that will become clear shortly. 

\begin{enumerate}

\item  $\im P \geq 0$; 

\item the row vectors of  the submatrix
\begin{equation*}
\left(
\begin{array}{ll}
P_{\theta x} & P_{\theta \theta} 
\end{array}
\right) \in \cc {N \times (d+N)}
\end{equation*}
are linearly independent over $\co$. 

\end{enumerate}

A statement equivalent to (2) is that the linear forms $\{ \partial p/\partial \theta_j \}_{j=1}^N$ be linearly independent over $\co$. 

It is clear that if assumption (1) is strengthened to $\im P > 0$ then the oscillatory integral \eqref{oscillint1} is absolutely convergent and $u \in L^1(\rr d)$. 
We will next show that when $\im P \geq 0$ the integral \eqref{oscillint1} still makes sense if it is interpreted as a  regularization, and then $u \in \cS'(\rr d)$. 

\begin{lem}\label{matrixlemma}
Suppose $p$ is a quadratic form on $\rr {d + N}$ defined by a symmetric Gaussian generator matrix $P \in \cc {(d+N) \times (d+N)}$. 
Then there exist matrices $A \in \cc {N \times d}$, $B \in \cc {N \times d}$ and $C \in \cc {N \times N}$ such that 
\begin{equation}\label{thetaformula}
\theta = A x + B p_x'(x,\theta) + C p_\theta '(x,\theta), \quad x\in \rr d, \quad \theta \in \rr N. 
\end{equation}
\end{lem}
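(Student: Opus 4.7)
The plan is to treat the equation \eqref{thetaformula} as an underdetermined linear problem in the unknown matrices $A,B,C$ and use the rank hypothesis on $(P_{\theta x} \ P_{\theta\theta})$ to produce a solution. First I would compute the gradients explicitly: since $P$ is symmetric, differentiating $p(x,\theta)=\langle (x,\theta),P(x,\theta)\rangle$ block by block gives
\begin{equation*}
p_x'(x,\theta) = 2\bigl(P_{xx}x + P_{x\theta}\theta\bigr),\qquad p_\theta'(x,\theta) = 2\bigl(P_{\theta x}x + P_{\theta\theta}\theta\bigr).
\end{equation*}
Substituting these expressions into the right-hand side of \eqref{thetaformula} and identifying the coefficients of $x$ and $\theta$ (which are independent variables) yields the pair of matrix equations
\begin{equation*}
A + 2B P_{xx} + 2C P_{\theta x} = 0, \qquad 2B P_{x\theta} + 2C P_{\theta\theta} = I_N.
\end{equation*}

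The first equation simply determines $A$ in terms of $B$ and $C$, so the whole lemma reduces to solving the second. Rewriting it as
\begin{equation*}
\bigl( 2B \ \ 2C \bigr) \begin{pmatrix} P_{x\theta} \\ P_{\theta\theta} \end{pmatrix} = I_N,
\end{equation*}
and observing, by symmetry of $P$, that $\begin{pmatrix} P_{x\theta} \\ P_{\theta\theta} \end{pmatrix} = \bigl( P_{\theta x} \ \ P_{\theta\theta} \bigr)^{t}$, the task is precisely to exhibit a left inverse of this $(d+N)\times N$ matrix. This is where the second Gaussian generator hypothesis enters: by assumption, the $N$ rows of $(P_{\theta x} \ \ P_{\theta\theta})$ are linearly independent over $\co$, so this matrix has rank $N$, hence its transpose has rank $N$ as well and its $N$ columns are linearly independent in $\cc{d+N}$. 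A standard linear algebra argument (e.g. extending to a basis, or equivalently noting that $M M^t$ is invertible for any $N\times(d+N)$ matrix $M$ of rank $N$, and then taking $(2B \ 2C) = M(M^t M)^{-1} M^t$... more simply, the explicit choice $(2B \ 2C) = (M M^t)^{-1} M$ with $M = (P_{\theta x} \ P_{\theta\theta})$) produces the required left inverse.

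With $B$ and $C$ in hand, set $A := -2B P_{xx} - 2C P_{\theta x}$; this realizes \eqref{thetaformula} identically in $(x,\theta) \in \rr d \times \rr N$. I expect no serious obstacle: the hypothesis $\im P \geq 0$ plays no role here, and the content is purely linear-algebraic, reducing to the fact that a full row-rank matrix admits a right inverse. The only mild care needed is the bookkeeping of blocks and the symmetry identities $P_{x\theta}=P_{\theta x}^{t}$ and $P_{\theta\theta}^{t}=P_{\theta\theta}$ used to recognize the relevant matrix as the transpose of $(P_{\theta x} \ P_{\theta\theta})$.
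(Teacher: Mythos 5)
Your proposal follows essentially the same route as the paper: compute the block gradients of $p$, match the coefficients of $x$ and $\theta$, use the full row rank of $(P_{\theta x}\ \ P_{\theta\theta})$ (equivalently the full column rank of its transpose) to produce a left inverse giving $B,C$, and then set $A=-2(BP_{xx}+CP_{\theta x})$. One caveat on your parenthetical explicit constructions: over $\co$ with the plain (non-conjugated) transpose, $MM^{t}$ need \emph{not} be invertible for a full row rank $M$ --- e.g. $M=(1\ \ i)$ gives $MM^{t}=0$ --- and $M^{t}M$ is never invertible when $N<d+N$; so you should rely on the basis-extension argument you also mention (or use $MM^{*}$ with the conjugate transpose), which is all that is needed and is exactly the level of justification the paper gives.
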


\begin{proof}
The gradients $p_x'(x,\theta)$ and $p_\theta'(x,\theta)$ are the linear forms
\begin{equation*}
\begin{aligned}
& p_x'(x,\theta) = 2 
\left(
\begin{array}{ll}
P_{x x} & P_{x \theta} 
\end{array}
\right)
\left(
\begin{array}{l}
x \\
\theta 
\end{array}
\right), \\
& p_\theta'(x,\theta) = 2 
\left(
\begin{array}{ll}
P_{\theta x} & P_{\theta \theta} 
\end{array}
\right)
\left(
\begin{array}{l}
x \\
\theta 
\end{array}
\right), 
\end{aligned}
\end{equation*}
respectively. 
Thus we can rewrite \eqref{thetaformula} as 
\begin{equation}\label{reformulation1}
\left(A+2 B P_{x x} + 2 C P_{\theta x} \right) x + 
\left(2  
\left(
\begin{array}{ll}
B & C
\end{array}
\right)
\left(
\begin{array}{l}
P_{x \theta} \\
P_{\theta \theta} 
\end{array}
\right)
- I_N \right) \theta = 0
\end{equation}
for all $(x,\theta) \in \rr {d+N}$. 
The assumptions imply that the matrix 
\begin{equation*}
\left(
\begin{array}{l}
P_{x \theta} \\
P_{\theta \theta} 
\end{array}
\right)
\end{equation*}
has full rank equal to $N$. Therefore the matrices $B$ and $C$ can be chosen such that 
\begin{equation}\label{leftinverse}
2  \left(
\begin{array}{ll}
B & C
\end{array}
\right)
\left(
\begin{array}{l}
P_{x \theta} \\
P_{\theta \theta} 
\end{array}
\right)
= I_N. 
\end{equation}
If we then let $A=-2 (B P_{x x} + C P_{\theta x})$ it follows that \eqref{reformulation1} holds for all
$(x,\theta) \in \rr {d+N}$. 
\end{proof}

The following result is proved in \cite[Proposition 5.5]{Hormander2}. 

\begin{prop}\label{oscintprop}
Let $p$ be a quadratic form on $\rr {d + N}$ defined by a symmetric Gaussian generator matrix $P \in \cc {(d+N) \times (d+N)}$. 
Then the oscillatory integral \eqref{oscillint1} can be interpreted as a tempered distribution $u \in \cS'(\rr d)$ that agrees with the Lebesgue integral \eqref{oscillint1} when $\im P > 0$. 
\end{prop}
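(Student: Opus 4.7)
\emph{Plan.} The strategy is to construct a first-order linear partial differential operator $L$ in the variables $(x,\theta)$ with rational coefficients such that $L e^{ip} = e^{ip}$, and then to define $u$ by regularization and integration by parts against this operator.

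From the identity \eqref{thetaformula} of Lemma~\ref{matrixlemma} and the relations $p_x'(x,\theta)\, e^{ip} = -i \partial_x e^{ip}$, $p_\theta'(x,\theta)\, e^{ip} = -i \partial_\theta e^{ip}$, I obtain
\[
(1+|\theta|^2)\, e^{ip(x,\theta)} = \bigl( 1 + \theta^t A x - i (B^t \theta)^t \partial_x - i (C^t \theta)^t \partial_\theta \bigr)\, e^{ip(x,\theta)}.
\]
Dividing by $1+|\theta|^2$ yields $L e^{ip} = e^{ip}$, where the coefficients of $L$ are of order $\eabs{x}\eabs{\theta}^{-1}$ in the multiplicative part and of order $\eabs{\theta}^{-1}$ in the differential part. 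The formal transpose $L^t$ is a first-order operator of the same structural type (the $x$-divergence of $L$ vanishes since the $\beta$-coefficients are independent of $x$, and the $\theta$-divergence of the $\gamma$-coefficients decays faster in $\theta$): applied to a smooth function of $(x,\theta)$, it produces a function that has gained a factor $\eabs{\theta}^{-1}$ at the cost of at most one extra power of $\eabs{x}$ and one differentiation.

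For $\varphi \in \cS(\rr d)$ and a cut-off $\chi \in \cS(\rr N)$ with $\chi(0) = 1$, set
\[
\langle u_\ep, \varphi \rangle := \int_{\rr d}\int_{\rr N} e^{ip(x,\theta)}\, \varphi(x)\, \chi(\ep \theta)\, d\theta\, dx, \qquad \ep > 0,
\]
which is absolutely convergent. Since $L^k e^{ip} = e^{ip}$ for every positive integer $k$ and the Schwartz factors $\varphi(x)$ and $\chi(\ep \theta)$ kill all boundary terms at infinity, integration by parts gives
\[
\langle u_\ep, \varphi \rangle = \int_{\rr d}\int_{\rr N} e^{ip(x,\theta)}\, (L^t)^k\bigl( \varphi(x)\chi(\ep \theta) \bigr)\, d\theta\, dx.
\]
Choosing $k > N$ and iterating the structural estimates for $L^t$, with the Schwartz decay of $\varphi$ absorbing the polynomial growth in $x$ that is generated, the integrand is bounded uniformly in $\ep \in [0,1]$ by a fixed integrable function on $\rr {d+N}$. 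Dominated convergence then shows that $\lim_{\ep \to 0^+} \langle u_\ep, \varphi \rangle$ exists, and defines a linear functional $u$ on $\cS(\rr d)$ that is continuous in $\varphi$ and independent of the choices of $k$ and $\chi$. Hence $u \in \cS'(\rr d)$. When $\im P > 0$, the factor $e^{ip(x,\theta)}$ already decays like a Gaussian in $\theta$, uniformly on bounded sets in $x$, so Fubini's theorem identifies this regularized tempered distribution with the absolutely convergent Lebesgue integral \eqref{oscillint1}.

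The main technical obstacle is the bookkeeping of the polynomial growth in $x$ created by the coefficient $(1+\theta^t A x)/(1+|\theta|^2)$ of $L$ under iteration of $L^t$: each application may add a power of $\eabs{x}$. Since $\varphi \in \cS(\rr d)$, these powers are absorbed by the rapid decay of $\varphi$ and its derivatives, and a careful induction on $k$ yields uniform bounds in $\ep$ of the form $C_{\varphi,\chi}\, \eabs{x}^{-N_1}\eabs{\theta}^{-k}$, which is the integrability estimate needed for the dominated convergence argument.
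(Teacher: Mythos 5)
Your proposal is correct and follows essentially the same route as the paper: both build from Lemma \ref{matrixlemma} a first-order operator that reproduces $e^{ip}$, integrate by parts to gain decay $\eabs{\theta}^{-k}$ while letting the Schwartz decay of $\varphi$ absorb the powers of $x$, and conclude that the resulting absolutely convergent expression defines $u \in \cS'(\rr d)$ agreeing with the Lebesgue integral when $\im P > 0$. The only difference is technical packaging: you use a single radial weight $(1+|\theta|^2)^{-1}$ together with a cutoff $\chi(\ep\theta)$ removed by dominated convergence, whereas the paper works coordinate-wise with factors $(1-i\theta_j)^{-1}$ and takes the integrated-by-parts formula, derived under $\im P > 0$, directly as the definition of $u$ for $\im P \geq 0$, with no limiting procedure.
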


\begin{proof}
First suppose $\im P > 0$. Then the integral  \eqref{oscillint1} is absolutely convergent. 
By Lemma \ref{matrixlemma} we can write for any $1 \leq j \leq N$  
\begin{equation*}
\theta_j = \sum_{k=1}^{d} a_{j k} x_k + \sum_{k=1}^{d} b_{j k} \frac{\partial p}{\partial x_k} (x,\theta) + \sum_{k=1}^{N} c_{j k} \frac{\partial p}{\partial \theta_k} (x,\theta)
\end{equation*}
for some complex coefficients $(a_{j k})$, $(b_{j k})$, $(c_{j k})$.
Let $\fy \in \cS(\rr d)$. 
Multiplying with $(1-i \theta_j) (1-i \theta_j)^{-1}$ and integrating by parts give
\begin{align*}
& \iint_{\rr {d+N}} e^{i p(x,\theta)} \, \fy (x)  \, dx \, d \theta \\
& = \iint_{\rr {d+N}} e^{i p(x,\theta)} \left( 
1-i \sum_{k=1}^{d} a_{j k} x_k + \sum_{k=1}^{d} b_{j k} \frac{\partial}{\partial x_k} + \sum_{k=1}^{N} c_{j k} \frac{\partial}{\partial \theta_k} 
\right) \, 
\frac{\fy(x)}{1-i \theta_j}
 \, dx \, d \theta \\
& = \iint_{\rr {d+N}} e^{i p(x,\theta)} \left( 
\frac{L_j \, \fy (x)}{1-i \theta_j} + \frac{i c_{j j} \fy (x)}{(1-i \theta_j)^2} \right) \, dx \, d \theta, 
\end{align*}
where 
$L_j$ is the partial differential operator 
\begin{equation*}
L_j = 1-i \sum_{k=1}^{d} a_{j k} x_k + \sum_{k=1}^{d} b_{j k} \frac{\partial}{\partial x_k}. 
\end{equation*}
Repeating the same procedure once again, and then iterating over $j=1,\dots,N$, yield by induction
\begin{multline}\label{regularization1}
\iint_{\rr {d+N}} e^{i p(x,\theta)} \, \fy (x)  \, dx \, d \theta \\
 = \iint_{\rr {d+N}} e^{i p(x,\theta)} 
\sum_{|\alpha| + |\beta|=4N, \, \beta_1,\dots, \beta_N \geq 2}
C_{\alpha,\beta} \, \frac{L_N^{\alpha_N} \cdots L_1^{\alpha_1} \fy (x)}{(1-i\theta_1)^{\beta_1} \cdots {(1-i\theta_N)^{\beta_N}}}
\, dx \, d \theta,
\end{multline}
where $\alpha=(\alpha_1,\dots,\alpha_N) \in \nn N$, $\beta=(\beta_1,\dots,\beta_N) \in \nn N$ and $C_{\alpha,\beta} \in \co$ are constants. 

If we now relax the assumption $\im P > 0$ to $\im P \geq 0$ we see that the right hand side integral \eqref{regularization1} still converges absolutely, thanks to the terms $(1-i\theta_j)^{-\beta_j}$ where $\beta_j \geq 2$.
It follows that
\begin{equation*}
\la u, \fy \ra = \iint_{\rr {d+N}} e^{i p(x,\theta)} \, \fy (x)  \, dx \, d \theta 
\end{equation*}
defined by the right hand side of \eqref{regularization1} is a well defined element $u \in \cS'(\rr d)$ when $\im P \geq 0$. 
\end{proof}

It is shown in \cite{Hormander2} that a tempered distribution $u \in \cS'(\rr d)$ defined by the oscillatory integral \eqref{oscillint1} is a so called Gaussian, which justifies our designation of the corresponding matrix $P$ as a Gaussian generator. 

To define a Gaussian one needs the following concept. 
For $0 \neq u \in \cD'(\rr d)$ set
\begin{equation*}
\cL_u = \{ L(x,\xi)= \la a,\xi \ra + \la b,x \ra : \, a, b \in \cc d, \, L^w(x,D) u = 0 \}.
\end{equation*}
Thus $\cL_u$ is the space of linear symbols $L$ whose Weyl operator $L^w(x,D)$ acting on $u$ is zero. 
The distribution $u \in \cD'(\rr d)$ is called Gaussian if every $v \in \cD'(\rr d)$ such that $L^w(x,D) v = 0$ for all $L \in \cL_u$ is a complex multiple of $u$. 

As shown in \cite[Proposition 5.1]{Hormander2}, a Gaussian $u$ is, up to multiplication with a nonzero complex number, bijectively associated with a Lagrangian subspace $\lambda_u \subseteq T^* \cc d$, defined as
\begin{equation*}
\lambda_u = \bigcap_{L \in \cL_u } \{ (x,\xi) \in T^* \cc d: \, L(x,\xi) =0 \}. 
\end{equation*}
As the term indicates, a Gaussian is a distribution of the form
\begin{equation*}
C (\delta_0 \otimes e^q) \circ T
\end{equation*}
where $T \in \rr {d \times d}$ is an invertible matrix, $\delta_0$ is the delta function on $\rr k$, $0 \leq k \leq d$, $q$ is a quadratic form on $\rr {d-k}$, and $C \in \co \setminus 0$. 

A Gaussian is a tempered distribution exactly when the quadratic form $q$ has a non-positive real part. 
This happens if and only if the corresponding Lagrangian $\lambda \subseteq T^* \cc d$ is positive \cite{Hormander0b, Hormander2, Melin1}, which means
\begin{equation*}
i \sigma(\overline X,X) \geq 0, \quad X \in \lambda. 
\end{equation*}

It is proven in \cite[Proposition 5.6]{Hormander2} that an oscillatory integral of the form \eqref{oscillint1} does not have a uniquely determined integrand domain dimension $N$. It may happen that $N$ can be decreased, and if the quadratic form $p$ is modified accordingly the resulting modified oscillatory integral gives the same distribution $u \in \cS' (\rr d)$ times a nonzero complex constant.

According to \cite[Proposition 5.5]{Hormander2}, the Lagrangian corresponding to the oscillatory integral \eqref{oscillint1} is the positive Lagrangian 
\begin{equation}\label{lagrangian1}
\lambda = \{ (x, p_x'(x,\theta)) \in T^* \cc d: \, p_\theta'(x,\theta) = 0, \, (x,\theta) \in \cc {d+N} \} \subseteq T^* \cc d. 
\end{equation}

If a Lagrangian $\lambda \subseteq T^* \cc d$ is strictly positive, that is, 
\begin{equation*}
i \sigma(\overline X,X) > 0, \quad 0 \neq X \in \lambda,  
\end{equation*}
then by \cite[Proposition 21.5.9]{Hormander0} there exists a symmetric matrix $P \in \co^{d \times d} $ with $\im P > 0$ such that 
\begin{equation}\label{strictposlag1}
\lambda = \{ (x, 2 Px), \, x \in \cc d \}. 
\end{equation}
This matrix $P$ can be considered a particular case of matrices of the form \eqref{Pmatrix} with $N=0$. It satisfies the criteria for a Gaussian generator matrix (with condition (2) interpreted as non-existent when $N=0$), and  
\eqref{strictposlag1} is a particular case of a Lagrangian of the form \eqref{lagrangian1} with $p(x,\theta)=\la x, P x \ra$. 
The corresponding Gaussian 
$$
u(x) = e^{i \la x, P x \ra}, \quad x \in \rr d, 
$$ 
can be considered a degenerate case of an oscillatory integral of the type \eqref{oscillint1}. 

As we will see next, also a Lagrangian that is merely positive and not necessarily strictly positive can be given the form \eqref{lagrangian1}. 
In fact, a more particular parametrization of the Lagrangian exists, the properties of which are needed for the forthcoming inclusion of the Gabor wave front set of an oscillatory integral into the intersection of the corresponding positive Lagrangian and $T^* \rr d$ (Theorem \ref{WFinclusion}). 

First we need a lemma expressed in terms of the projection $\lambda_1$ of a Lagrangian $\lambda \subseteq T^* \cc d$ on the first $\cc d$ coordinate:
\begin{equation}\label{proj1coord}
\lambda_1 = \{ x \in \cc d: \, \exists \xi \in \cc d: \, (x,\xi) \in \lambda \} \subseteq \cc d,  
\end{equation}
which is a complex linear space of dimension $0 \leq k \leq d$. 
We split $\cc d$ variables as $x=(x',x'')$, $x' \in \cc k$, $x'' \in \cc N$ where $N=d-k$. 

\begin{lem}\label{lagrangianpure}
Let $\lambda \subseteq T^* \cc d$ be a positive Lagrangian and suppose 
\begin{equation}\label{subspace1}
\lambda_1 = \cc k \times \{ 0 \}. 
\end{equation}
If $k=0$ then $\lambda=\{0\} \times \cc d$, and if $1 \leq k \leq d$ then 
there exists  a matrix $Q \in \cc {k \times k}$ such that $\im Q \geq 0$ and 
\begin{equation}\label{lagrangerep}
\lambda = \{ (x',0,Qx',\xi''): \, x' \in \cc k, \, \xi'' \in \cc N \}. 
\end{equation}
\end{lem}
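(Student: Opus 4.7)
The plan is to split on $k$ and, in the main case $1\le k\le d$, to realize $\lambda$ as the graph of a linear map $Q:\cc k\to\cc k$ after first peeling off a canonical isotropic factor.

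When $k=0$ the hypothesis \eqref{subspace1} forces $\lambda\subseteq\{0\}\times\cc d$. The right hand side is Lagrangian (the symplectic form \eqref{cansympform} vanishes identically on it) and has complex dimension $d$, so $\lambda=\{0\}\times\cc d$.

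Now assume $1\le k\le d$ and let $V=\{((0,0),(0,\eta'')):\eta''\in\cc N\}\subseteq T^*\cc d$. First I would check $V\subseteq\lambda$. By \eqref{subspace1}, any $X\in\lambda$ has the form $X=((x',0),(\xi',\xi''))$, so for $v=((0,0),(0,\eta''))\in V$ the formula \eqref{cansympform} gives $\sigma(v,X)=\la (x',0),(0,\eta'')\ra-\la (0,0),(\xi',\xi'')\ra=0$. Since $\lambda=\lambda^\sigma$, this yields $V\subseteq\lambda$. Next, consider the linear projection $\pi:\lambda\to\cc k$, $X\mapsto x'$, which is surjective by \eqref{subspace1}; I claim $\ker\pi=V$. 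Indeed, if $Z=((0,0),(\xi',\xi''))\in\ker\pi$, then $Z\in\lambda=\lambda^\sigma$, so for any $Y=((y',0),(\eta',\eta''))\in\lambda$ we have $0=\sigma(Z,Y)=\la y',\xi'\ra$; since $y'$ sweeps all of $\cc k$ we obtain $\xi'=0$. Hence $\dim\ker\pi=N$, and $\pi$ factors through a linear isomorphism $\lambda/V\to\cc k$. Inverting it produces a unique linear $Q:\cc k\to\cc k$ with $\xi'=Qx'$ for every $((x',0),(\xi',\xi''))\in\lambda$, which is exactly \eqref{lagrangerep}.

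It remains to verify that $Q$ is symmetric and $\im Q\ge 0$. Symmetry is forced by isotropy of $\lambda$: applied to $X_j=((x_j',0),(Qx_j',0))\in\lambda$ the identity $\sigma(X_1,X_2)=0$ reduces to $(x_2')^tQx_1'=(x_1')^tQx_2'$ for all $x_1',x_2'\in\cc k$, hence $Q^t=Q$. For positivity, a direct computation using $Q^t=Q$ rewrites $i\sigma(\bar X,X)=2\,\overline{x'}^t(\im Q)\,x'$ for $X=((x',0),(Qx',0))$; splitting $x'=a+ib$ with $a,b\in\rr k$ and using the symmetry of $\im Q$ this becomes $2\bigl(a^t(\im Q)a+b^t(\im Q)b\bigr)$, which is non-negative for all $a,b$ exactly when $\im Q\ge 0$. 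The most delicate step is probably identifying $\ker\pi$ with $V$, where both the Lagrangian self-orthogonality and the precise form \eqref{subspace1} of $\lambda_1$ are essential; the remaining steps are standard manipulations of the form $X\mapsto i\sigma(\bar X,X)$ on the graph.
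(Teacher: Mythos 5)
Your proof is correct, and it takes a genuinely different route from the paper's. You work directly: using $\lambda=\lambda^{\sigma}$ and \eqref{subspace1} you show that the vertical space $V=\{(0,0,0,\eta''):\,\eta''\in\cc N\}$ is contained in $\lambda$, identify $V$ with the kernel of the projection $\pi:\lambda\to\cc k$, $X\mapsto x'$, and define $Q$ as the graph map of the induced isomorphism $\lambda/V\to\cc k$; symmetry of $Q$ then follows from isotropy of $\lambda$, and $\im Q\geq 0$ from positivity, both by direct evaluation of $\sigma$ on points of the form $(x',0,Qx',\xi'')$. The paper instead uses a perturbation trick: it forms the auxiliary space $\Lambda=\{(x,ix+(\xi',0))\}$, proves that $\Lambda$ is a strictly positive Lagrangian, and invokes the classification of strictly positive Lagrangians \cite[Proposition 21.5.9]{Hormander0} to write $\Lambda=\{(x,Ax):\,x\in\cc d\}$ with $A$ symmetric and $\im A>0$, from which $\xi'=Qx'$ is extracted. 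Your argument is more elementary and self-contained (no auxiliary Lagrangian, no appeal to the strictly positive classification), while the paper's route inherits the symmetry of $Q$ from that of $A$ and runs parallel to the strictly positive case it treats first. The only step you leave slightly implicit is the reverse inclusion in \eqref{lagrangerep}: it follows at once from surjectivity of $\pi$ together with $V\subseteq\lambda$ (or from a dimension count, both sides having dimension $k+N=d$), so this is purely cosmetic.
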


\begin{proof}
If $\lambda \subseteq T^* \cc d$ is a strictly positive Lagrangian then by the preceding discussion \eqref{strictposlag1} holds with $P \in \co^{d \times d} $ such that $\im P > 0$. In this case $k=d$,  $\lambda_1= \cc d$ and \eqref{lagrangerep} holds with $Q=2P$. 

Suppose $\lambda \subseteq T^* \cc d$ is a positive but not strictly positive Lagrangian. 
If $k=0$ we have $\lambda=\{0\} \times \cc d$.
If $k > 0$ we define 
\begin{equation*}
\Lambda = \{ (x,ix+(\xi',0)) : \, x \in \cc d, \, \exists \xi'' \in \cc N: \, (x',0,\xi) \in \lambda \} \subseteq T^* \cc d. 
\end{equation*}
To show that the set $\Lambda$ is well defined, suppose that $(x',0,\xi), (x',0,\eta) \in \lambda$. 
Then by linearity $(0,\xi-\eta) \in \lambda$. Since $\lambda$ is Lagrangian we have for any $(y',0,\theta) \in \lambda$
\begin{equation*}
0 = \sigma( (0,\xi-\eta), (y',0,\theta)) = \la \xi'-\eta',y' \ra
\end{equation*}
which implies $\xi' = \eta'$, so $\Lambda$ is indeed well defined. It is also a complex linear space. 

Let $(x,ix+(\xi',0)), (y,iy+(\eta',0)) \in \Lambda$, that is $(x',0,\xi), (y',0,\eta) \in \lambda$ for some $\xi'',\eta'' \in \cc N$. 
Again since $\lambda$ is Lagrangian
\begin{align*}
\sigma( (x,ix+(\xi',0)), (y,iy+(\eta',0)) ) 
& = \la y',\xi' \ra - \la x',\eta' \ra \\
& = \sigma( (x',0,\xi), (y',0,\eta) ) = 0
\end{align*}
which means that 
\begin{equation*}
\Lambda \subseteq  \{ X \in T^* \cc d: \, \sigma(X,Y) =0 \ \forall Y \in \Lambda \}. 
\end{equation*}
Such a subspace is called isotropic (cf. \cite[Definition 21.2.2]{Hormander0}) and has dimension $\leq d$. 
To see that actually $\dim_{\co} \Lambda = d$ denote by $\{ e_j \}_{j=1}^d$ the standard basis vectors in $\cc d$. 
For each $j=1,\dots,d$ there exists $\xi_j \in \cc d$ such that $(e_j',0,\xi_j) \in \lambda$, so $(e_j, i e_j + (\xi_j',0) ) \in \Lambda$ for each  
$j=1,\dots,d$. The set 
\begin{equation*}
\{ (e_j, i e_j + (\xi_j',0) ) \}_{j=1}^d \subseteq T^* \cc d
\end{equation*}
is clearly linearly independent over $\co$, so $\dim_{\co} \Lambda \geq d$. Thus we have $\dim_{\co} \Lambda = d$. 
According to \cite[Chapter 21.2]{Hormander0} $\Lambda$ is then Lagrangian. 

Let $(x,ix+(\xi',0)) \in \Lambda$ that is $(x',0,\xi) \in \lambda$ for some $\xi'' \in \cc N$.
By the assumption that $\lambda$ is positive we have 
\begin{equation*}
0 \leq i \sigma ( \overline{(x',0,\xi)},  (x',0,\xi) ) = 2 \im \la \overline{x}', \xi' \ra,
\end{equation*}
which gives
\begin{equation}\label{strictposlag2}
i\sigma ( \overline{(x,ix+(\xi',0))},  (x,ix+(\xi',0)) ) = 2 |x|^2 + 2 \im \la \overline{x}', \xi' \ra > 0, \quad x \neq 0. 
\end{equation}
If $x=0$ and $(x,\xi) \in \lambda$ we have for all $(y',0,\eta) \in \lambda$
\begin{equation*}
0 = \sigma( (0,\xi), (y',0,\eta) ) = \la \xi', y' \ra,
\end{equation*}
which implies $\xi'=0$. Thus \eqref{strictposlag2} says that $\Lambda$ is a strictly positive Lagrangian. 

By \cite[Proposition 21.5.9]{Hormander0} there exists a symmetric matrix $A \in \co^{d \times d}$ with $\im A > 0$ such that  
\begin{equation*}
\Lambda = \{ (x,Ax) : \, x \in \cc d \}. 
\end{equation*}
Let $(x',0,\xi) \in \lambda$. Then $( (x',0), i (x',0) + (\xi',0) ) \in \Lambda$ and therefore
\begin{equation*}
i (x',0) + (\xi',0) = A (x',0).
\end{equation*}
It follows that $\xi'=Q x'$ for some symmetric matrix $Q \in \cc {k \times k}$. 
From this argument and $\dim_{\co} \lambda = d$ we obtain
\begin{equation*}
\lambda = \{ (x',0,Qx',\xi''): \, x' \in \cc k, \, \xi'' \in \cc N \}. 
\end{equation*}
The assumed positivity of $\lambda$ is equivalent to $\im Q \geq 0$. 
\end{proof}

In the next result we use the notation $U \perp V$ for linear subspaces $V \subseteq \rr d$ and $U \subseteq \cc d$ to mean that $\re(U), \im (U) \subseteq V^\perp \subseteq \rr d$, that is $\la u,v \ra = 0$ for all $u \in U$ and $v \in V$. 

\begin{prop}\label{poslagrparam}
Let $\lambda \subseteq T^* \cc d$ be a positive Lagrangian
and define the subspace $\lambda_1$ by \eqref{proj1coord}, $k = \dim_{\co} \lambda_1$ and $N=d-k$. 
Then there exists a quadratic form $\rho$ on $\rr d$ with $\im \rho \geq 0$, 
such that if $k=d$ then
\begin{equation}\label{lagrangian2a}
\lambda = \{ (x, \rho'(x)): \,  x \in \cc {d} \}, 
\end{equation}
while if $0 \leq k \leq d-1$ then there exists $L \in \rr {d \times N}$ injective such that 
\begin{equation}\label{orthorange}
\Ran \rho' \perp \Ran L
\end{equation}
and
\begin{equation}\label{lagrangian2}
\lambda = \{ (x, \rho'(x) + L \theta): \,  (x,\theta) \in \cc {d+N}, \, L^t x = 0 \}. 
\end{equation}
\end{prop}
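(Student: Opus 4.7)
The plan is to reduce to the normal form of Lemma \ref{lagrangianpure} by a real orthogonal change of coordinates, after first proving that $\lambda_1$ is the complexification of the real subspace $V = \lambda_1 \cap \rr d$.

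The main step is the conjugation-invariance $\overline{\lambda_1} = \lambda_1$. The Lagrangian condition together with a dimension count gives
\[
\{ \eta \in \cc d : (0,\eta) \in \lambda \} = \lambda_1^{\perp_{\co}},
\]
the complex bilinear orthogonal complement of $\lambda_1$ in $\cc d$. Hence, for any $(x_0,\xi_0) \in \lambda$ and any $\eta \in \lambda_1^{\perp_{\co}}$, also $(x_0,\xi_0+\eta) \in \lambda$. A direct calculation yields $i\sigma(\overline Z, Z) = -2 \im \la x, \overline\xi\ra$ for $Z=(x,\xi) \in T^*\cc d$, so positivity applied to $Z = (x_0,\xi_0+\eta)$ bounds the $\ro$-linear map $\eta \mapsto \im \la x_0, \overline\eta\ra$ from above on $\lambda_1^{\perp_{\co}}$, forcing it to vanish identically. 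Substituting $i\eta$ for $\eta$ (allowed since $\lambda_1^{\perp_{\co}}$ is a complex subspace) likewise kills $\re \la x_0, \overline\eta\ra$, so $\la x_0, \overline\eta\ra = 0$ for every $\eta \in \lambda_1^{\perp_{\co}}$. The identity $\overline{\lambda_1^{\perp_{\co}}} = (\overline{\lambda_1})^{\perp_{\co}}$ together with bilinear biduality in $\cc d$ then gives $x_0 \in \overline{\lambda_1}$, so $\lambda_1 \subseteq \overline{\lambda_1}$; taking conjugates yields $\lambda_1 = \overline{\lambda_1} = V_{\co}$ with $V := \lambda_1 \cap \rr d$ of real dimension $k$.

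Next, choose an orthogonal $U \in O(d)$ mapping $V$ onto $\rr k \times \{0\}$ and set $\chi = \mathrm{diag}(U,U) \in \Sp(d,\ro)$. Since $\chi$ is real, it preserves positivity of Lagrangians, and $(\chi\lambda)_1 = U\lambda_1 = \cc k \times \{0\}$, so Lemma \ref{lagrangianpure} applies to $\chi\lambda$. When $k\geq 1$ it gives a symmetric $Q \in \cc{k\times k}$ with $\im Q \geq 0$ such that $\chi\lambda = \{(x',0,Qx',\xi''): x' \in \cc k, \xi'' \in \cc N\}$, while $k=0$ gives $\chi\lambda = \{0\} \times \cc d$ directly. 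Writing $U = \begin{pmatrix} U_1 \\ U_2 \end{pmatrix}$ with $U_1 \in \rr{k\times d}$, $U_2 \in \rr{N\times d}$, and applying $\chi^{-1}$ unravels this to
\[
\lambda = \{(U_1^t x', \, U_1^t Q x' + U_2^t \xi''): x' \in \cc k, \ \xi'' \in \cc N\}.
\]
Setting $L := U_2^t \in \rr{d\times N}$ (injective), $\theta := \xi''$, and $\rho(x) := \tfrac{1}{2} \la x, U_1^t Q U_1 x \ra$ (a quadratic form on $\rr d$ with $\im \rho \geq 0$ since $\im Q \geq 0$ and $U_1$ is real), the substitution $x = U_1^t x'$ together with the orthogonality identities $U_1 U_1^t = I_k$ and $U_1 U_2^t = 0$ recovers \eqref{lagrangian2}; the constraint $L^t x = U_2 x = 0$ is automatic on $\Ran U_1^t$. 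The range condition \eqref{orthorange} follows because $\rho'$ takes values in the complexification of $\Ran U_1^t$, which is orthogonal to $\Ran U_2^t = \Ran L$ in $\rr d$. The case $k=d$ (so $N=0$) gives \eqref{lagrangian2a} by the same computation with no $\theta$-variable.

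The main obstacle is the first step, extracting conjugation-invariance of $\lambda_1$ from positivity: it hinges on combining the real structure of the inequality $i\sigma(\overline Z, Z) \geq 0$ with the complex linearity of $\lambda_1^{\perp_{\co}}$, which is what upgrades the vanishing of the imaginary part to the full vanishing of $\la x_0, \overline\eta \ra$.
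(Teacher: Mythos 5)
Your proof is correct and follows essentially the same route as the paper: conjugate $\lambda$ by the block-diagonal real symplectic matrix built from an orthonormal basis adapted to the real form of $\lambda_1$, apply Lemma \ref{lagrangianpure} to the transformed Lagrangian, and unwind to obtain $\rho$ and $L$ with \eqref{orthorange} coming from the orthogonality of the two blocks of the basis. The only genuine addition is your first step, where you deduce from positivity that $\lambda_1=\overline{\lambda_1}$ (via the identification $\{\eta:\,(0,\eta)\in\lambda\}=\lambda_1^{\perp}$ and the argument that an $\ro$-linear functional bounded above on a subspace vanishes); the paper uses exactly this fact implicitly when it takes an orthonormal basis of $\re \lambda_1$ with precisely $k$ elements and asserts $\lambda_1=T(\cc k \times \{0\})$, so your argument supplies the justification for that step rather than a different method.
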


\begin{rem}
Note that $\lambda$ defined by \eqref{lagrangian2a}, \eqref{lagrangian2} respectively, are parametrized as in \eqref{lagrangian1} with $p(x,\theta) = \rho(x)$, 
\begin{equation*}
p(x,\theta) = \rho(x) + \la L \theta,x \ra,
\end{equation*}
respectively. These are quadratic forms on $\rr {d+N}$ (with $N=0$ in the former case) defined by 
by the symmetric Gaussian generator matrices $P=\rho'/2$ and 
\begin{equation}\label{Pdef}
P = \frac1{2} \left(
\begin{array}{ll}
\rho' & L \\
L^t & 0 
\end{array}
\right),
\end{equation}
respectively.
\end{rem}

\begin{proof}
If $k=d$ then Lemma \ref{lagrangianpure} gives the representation \eqref{lagrangian2a} with $\rho(x)=\la x, Q x \ra/2$. 
If $k=0$ then $\lambda= \{0\} \times \cc d$, and then $\rho=0$ and $L=I_d$ satisfy \eqref{orthorange}, \eqref{lagrangian2}.   

We can thus suppose $1 \leq k \leq d-1$ in the rest of the proof.
Let $\{ u_j \}_{j=1}^k \subseteq \rr d$ be an orthonormal basis for $\re \lambda_1$, 
and let  $\{ u_j \}_{j=k+1}^d \subseteq \rr d$ be an orthonormal basis for $(\re \lambda_1)^\perp$. 
Considering the vectors $u_j$ as columns, define the injective matrices
\begin{equation*}
U = (u_1 \cdots u_k) \in \rr {d \times k}, \quad L = (u_{k+1} \cdots u_d) \in \rr {d \times N}, 
\end{equation*}
and $T = (U \ L) \in \rr {d \times d}$. Then $T$ is an orthogonal matrix and 
\begin{equation}\label{zeroproduct}
L^t U = 0 \in \rr {N \times k}. 
\end{equation}
With this construction we have 
\begin{equation*}
\lambda_1 = T (\cc k \times \{0\}). 
\end{equation*}
Define
\begin{equation*}
\lambda_T =  \left(
\begin{array}{ll}
T^t & 0 \\
0 & T^t
\end{array}
\right)
\lambda. 
\end{equation*}
Since 
\begin{equation*}
\left(
\begin{array}{ll}
T^t & 0 \\
0 & T^t
\end{array}
\right) \in \Sp(d,\ro) ,
\end{equation*}
it follows that $\lambda_T$ is a positive Lagrangian. 
Since the projection of $\lambda_T$ on the first $\cc d$ coordinate is $\cc k \times \{0\}$, we may apply Lemma \ref{lagrangianpure} which says that 
\begin{equation*}
\lambda_T = \{ (x',0,Qx',\xi''): \, x' \in \cc k, \, \xi'' \in \cc N \} 
\end{equation*}
for a certain $Q \in \cc {k \times k}$ such that $\im Q \geq 0$. 
Hence 
\begin{align*}
\lambda & =  \left(
\begin{array}{ll}
T & 0 \\
0 & T
\end{array}
\right)
\lambda_T \\
& = \{ (T(x',0),T(Qx',\xi'') ): \, x' \in \cc k, \, \xi'' \in \cc N \}. 
\end{align*}
We have $y=T(x',0)$ for some $x' \in \cc k$ if and only if $T^t y \in \cc k \times \{0\}$ that is $L^t y=0$, 
and $x'=U^t y$. 
Hence
\begin{align*}
\lambda 
& = \{ (x,UQU^t x + L\xi'') ): \, (x,\xi'') \in \cc {d+N}, \, L^t x =0 \}. 
\end{align*}
If we set 
\begin{equation*}
\rho(x) = \frac1{2} \la x, UQU^t x \ra,
\end{equation*}
then $\im \rho \geq 0$ when $\rho$ is considered a quadratic form on $\rr d$, the equality \eqref{lagrangian2} holds, and \eqref{orthorange} follows from \eqref{zeroproduct}. 
\end{proof}

\subsection{The Gabor wave front set of an oscillatory integral}
\label{oscintgaborsec}

The next result is decisive for Section \ref{kernelsec}. It shows that the Gabor wave front set of an oscillatory integral is contained in the intersection of its corresponding Lagrangian with the real subspace $T^* \rr d$. 

\begin{thm}\label{WFinclusion}
Let $u \in \cS'(\rr d)$ be the oscillatory integral \eqref{oscillint1} defined by a quadratic form $p$ and a symmetric Gaussian generator matrix so that its associated positive Lagrangian $\lambda$ is given by \eqref{lagrangian1}. 
Then 
\begin{equation}\label{WFincl}
WF(u) \subseteq (\lambda \cap T^* \rr d) \setminus \{ 0 \}. 
\end{equation}
\end{thm}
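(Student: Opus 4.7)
The plan is to combine the canonical parametrization of the Lagrangian from Proposition \ref{poslagrparam} with an explicit computation of the short-time Fourier transform of $u$, and to deduce Gaussian decay in directions off $\lambda \cap T^*\rr d$.

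First I would reduce to a canonical form. Proposition \ref{poslagrparam} and the remark following it exhibit a symmetric Gaussian generator matrix $P$ of the block shape \eqref{Pdef} whose associated Lagrangian is again $\lambda$; since two oscillatory integrals attached to the same Lagrangian define the same tempered distribution up to a non-zero scalar, and since $WF(cu)=WF(u)$ for $c\neq 0$, I may replace $p$ by the canonical phase $\tilde p(x,\theta)=\rho(x)+\langle L\theta,x\rangle$. Using the symplectic invariance \eqref{symplecticinvarianceWF} together with the real orthogonal block-symplectic matrix built from the matrix $T=(U\ L)$ constructed in the proof of Proposition \ref{poslagrparam}, I can additionally arrange $\Ran L=\{0\}\times\rr N$ and $\Ker L^t=\rr k\times\{0\}$. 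In these coordinates $\rho$ depends only on $x_1\in\rr k$ and reads $\rho(x_1)=\langle x_1,R x_1\rangle$ for some symmetric $R\in\cc{k\times k}$ with $\im R\ge 0$, and the $\theta$-integration yields the closed form
\begin{equation*}
u(x_1,x_2)=c_0\,e^{i\langle x_1,R x_1\rangle}\otimes\delta_0(x_2).
\end{equation*}

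Next I would compute $V_\varphi u(x,\xi)$ with the normalised Gaussian window $\varphi(y)=\pi^{-d/4}e^{-|y|^2/2}$. The delta collapses the $y_2$-integration, and the remaining Gaussian integral in $y_1\in\rr k$ has quadratic matrix $M=I-2iR$, whose Hermitian part $2(I+2\im R)>0$ guarantees invertibility. Completing the square gives
\begin{equation*}
|V_\varphi u(x,\xi)|=C\,|\det M|^{-1/2}\,e^{-|x_2|^2/2}\,\exp\bigl(\tfrac12\Psi(x,\xi)\bigr),
\end{equation*}
where, writing $A=M^{-1}=A_R+iA_I$ with $A_R,A_I\in\rr{k\times k}$ symmetric,
\begin{equation*}
\Psi(x,\xi)=-|x_2|^2-|x_1|^2+x_1^t A_R x_1+2x_1^t A_I \xi_1-\xi_1^t A_R \xi_1,
\end{equation*}
and in the same coordinates the real slice reads $\lambda\cap T^*\rr d=\{(x_1,0,2(\re R)x_1,\xi_2):x_1\in\Ker(\im R),\ \xi_2\in\rr N\}$.

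The crux of the proof, and its main obstacle, is then to show that $\Psi\le 0$ on $\rr{2d}$ with zero set exactly $\lambda\cap T^*\rr d$. Once this is established, a compactness argument on the unit sphere gives $\Psi(x,\xi)\le -c\,|(x,\xi)|^2$ on any closed cone $\Gamma$ disjoint from $\lambda\cap T^*\rr d$; hence $|V_\varphi u(x,\xi)|$ decays as $e^{-c|(x,\xi)|^2/2}$ on $\Gamma$, and the STFT characterisation of $WF(u)$ recalled in Section \ref{prelim} delivers the inclusion \eqref{WFincl}. To establish the two properties of $\Psi$ I would extract from $AM=I$ the block identities $A_R M_R-A_I M_I=I$ and $A_R M_I+A_I M_R=0$ with $M_R=I+2\im R>0$ and $M_I=-2\re R$; these give in particular $A_R=(M_R+M_I M_R^{-1}M_I)^{-1}>0$ and allow rewriting $\Psi$ as a manifestly non-positive combination of squares. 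The technical point is the non-commuting case $[\re R,\im R]\neq 0$; I expect to handle it by a block orthogonal change of variables adapted to the splitting $\rr k=\Ker(\im R)\oplus\Ker(\im R)^\perp$, which reduces the verification to a separate scalar analysis on each block.
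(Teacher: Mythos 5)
Your reduction to the canonical form $u(x_1,x_2)=c_0\,e^{i\la x_1,Rx_1\ra}\otimes\delta_0(x_2)$ is sound (it is essentially the paper's own reduction, via Proposition \ref{poslagrparam}, the uniqueness of the Gaussian attached to $\lambda$, and \eqref{symplecticinvarianceWF} for the orthogonal change of variables), and the Gaussian-window STFT computation is correct apart from the slip that $-|x_2|^2$ appears both in the prefactor $e^{-|x_2|^2/2}$ and inside $\Psi$. The genuine gap is exactly at the step you yourself call the crux: the inequality $\Psi\le 0$ together with the identification of its zero set with $\lambda\cap T^*\rr d$ is asserted, not proved, and in your approach it carries the entire content of the theorem. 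Moreover, the strategy you sketch for the case $[\re R,\im R]\neq 0$ --- split $\rr k=\Ker(\im R)\oplus\Ker(\im R)^{\perp}$ and do ``separate scalar analysis on each block'' --- cannot work as stated: $\re R$ generally has nonzero off-diagonal blocks for this splitting, so $\Psi$ does not decouple, and the zero set $\{(x_1,0,2(\re R)x_1,\xi_2):(\im R)x_1=0\}$ itself straddles the two blocks whenever $\re R$ maps $\Ker(\im R)$ outside itself (e.g.\ $\im R=\mathrm{diag}(0,1)$, $\re R$ the flip on $\rr 2$). The claim is true, but its proof needs a genuine matrix argument: for instance substitute $v=M^{-1}(x_1-i\xi_1)$, so that the required bound becomes $\re(v^tMv)\le|\re(Mv)|^2$ for all $v\in\cc k$, and complete the square (a Schur-complement computation with $E=2\im R\ge0$, $S=-2\re R$, which produces the positive remainder $I+E+S(I+E)^{-1}S$ and identifies the equality case as $\im v=0$, $E\,\re v=0$), taking some care when $\im R$ is singular. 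As it stands, your argument is a correct framework with the decisive estimate missing and the proposed route to it flawed.

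For comparison, the paper avoids any explicit STFT estimate and any positivity analysis of a phase-space quadratic form: it writes $u=e^{-\rho_i}\,e^{i\rho_r}\,\delta_0(L^t\cdot)$ (up to a constant), notes that multiplication by $e^{-\rho_i}$ is a Weyl operator with symbol in $S^0_{0,0}$ and hence microlocal by \eqref{microlocal2}, that multiplication by $e^{i\rho_r}$ is metaplectic so \eqref{symplecticinvarianceWF} applies, and that $WF(\delta_0(L^t\cdot))=(\Ker L^t\times L\rr N)\setminus 0$ is known from \eqref{example1} and \cite[Proposition 2.3]{Hormander1}; the inclusion then follows at once, with the orthogonality $\Ran\rho'\perp\Ran L$ used only to identify $\lambda\cap T^*\rr d$. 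If you wish to keep your route, either supply the matrix inequality in full or replace that step by this factorization argument; the latter also explains why only an inclusion, not an equality, is claimed.
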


\begin{proof}
First we assume that $N \geq 1$ in \eqref{oscillint1} and in the end we will take care of the case $N=0$. 

By \cite[Propositions 5.6 and 5.7]{Hormander2} we may assume that $p$ has the form
\begin{equation*}
p(x,\theta) = \rho(x) + \la L \theta,x \ra
\end{equation*}
where $\rho$ is a quadratic form, $\im \rho \geq 0$ and $L \in \rr {d \times N}$ is injective. 
The term of $p$ that is quadratic in $\theta$ can thus be eliminated. 
The matrix $L$ is uniquely determined by $\lambda$ modulo invertible right factors, and the same holds for the values of $\rho$ on $\Ker L^t$. 

The positive Lagrangian \eqref{lagrangian1} associated to $u$ is
\begin{equation}\label{lagrangian3}
\lambda = \{ (x,\rho'(x) + L \theta): \, (x,\theta) \in \cc {d+N}, \, L^t x = 0 \} \subseteq T^* \cc d
\end{equation}
and (cf. \cite[Proposition 5.7]{Hormander2})
\begin{equation*}
u(x) = (2 \pi)^N \delta_0 (L^t x) e^{i \rho(x)}, 
\end{equation*}
where $\delta_0 = \delta_0(\rr N)$. 
By Proposition \ref{poslagrparam} and the uniqueness part of \cite[Proposition 5.7]{Hormander2} we may assume 
\begin{equation*}
\Ran \rho' \perp \Ran L. 
\end{equation*}
From this it follows that if  $x \in \rr d$, $\theta \in \cc N$ and $0 = \im (\rho'(x) + L \theta) = \im (\rho')(x) + L \im \theta$, 
then $\im (\rho')(x)=0$ and $L \im \theta=0$, so the injectivity of $L$ forces $\theta \in \rr N$. 
Thus 
\begin{equation*}
\lambda \cap T^* \rr d = \{ (x,\re(\rho')(x) + L \theta): \, (x,\theta) \in \rr {d+N}, \, L^t x = 0 \}. 
\end{equation*}
According to \cite[Proposition 2.3]{Hormander1} we have
\begin{equation*}
WF( \delta_0 (L^t \cdot) ) 
= \{ (x,L \xi) \in T^* \rr d \setminus 0 : \, (L^t x,\xi) \in WF(\delta_0) \} \cup \Ker L^t \setminus 0  \times \{ 0 \},
\end{equation*}
where $\Ker L^t = \Ker L^t \cap \rr d$ is understood. 
By \eqref{example1} $WF(\delta_0)= \{ 0 \} \times (\rr N \setminus \{0\})$ so 
\begin{align*}
WF( \delta_0 (L^t \cdot) ) 
& = \{ (x,L \xi) \in T^* \rr d \setminus 0: \, L^t x=0, \, \xi \in \rr N \setminus 0 \} \cup \Ker L^t \setminus 0 \times \{ 0 \} \\
& = (\Ker L^t \times L \rr N) \setminus 0. 
\end{align*}
We write $\rho(x) = \rho_r(x) + i \rho_i(x)$ and 
\begin{equation}\label{rhodef}
\rho_r (x) = \la x, R_r x \ra, \quad \rho_i (x) = \la x, R_i x \ra,
\end{equation}
with $R_r, R_i \in \rr {d \times d}$ symmetric and $R_i \geq 0$. 
The function $e^{ i \rho_r(x)}$, considered as a multiplication operator, is the metaplectic operator (cf. \eqref{symplecticoperator} and \cite{Folland1})
\begin{equation*}
e^{ i \rho_r(x)} = \mu (\chi),
\end{equation*}
where 
\begin{equation}\label{chidef}
\chi = 
\left(
\begin{array}{ll}
I_d & 0 \\
2 R_r & I_d 
\end{array}
\right) \in \Sp(d,\ro).
\end{equation}
Since 
 $R_i \geq 0$ we have 
\begin{equation*}
e^{- \rho_i(x) } \in S_{0,0}^0,
\end{equation*}
when we consider $e^{- \rho_i(x) }$ as a function of $(x,\xi) \in T^* \rr d$, constant with respect to the $\xi$ variable. 
The corresponding Weyl pseudodifferential operator, which is a multiplication operator, is by \eqref{microlocal2} microlocal with respect to the Gabor wave front set: 
\begin{equation}\label{propagation0}
WF(e^{- \rho_i } u) \subseteq WF(u), \quad u \in \cS'(\rr d). 
\end{equation}
Piecing these arguments together, using \eqref{symplecticinvarianceWF}, gives
\begin{equation}\label{WFincl0}
\begin{aligned}
WF( u ) 
& = WF ( e^{- \rho_i } e^{i \rho_r} \delta_0 (L^t \cdot) ) \\
& \subseteq WF ( e^{i \rho_r} \delta_0 (L^t \cdot) ) \\
& = \chi WF ( \delta_0 (L^t \cdot) ) \\
& = \{ (x, 2 R_r x + L \theta):  \, (x,\theta)  \in \rr {d+N}, \, L^t x = 0 \} \setminus \{ 0 \} \\
& = \{ (x, \rho_r'(x) + L \theta):  \, (x,\theta)  \in \rr {d+N}, \, L^t x = 0 \} \setminus \{ 0 \} \\
& = (\lambda \cap T^* \rr d) \setminus \{ 0 \}. 
\end{aligned}
\end{equation}
This ends the proof when $N \geq 1$. 
Finally, if $N=0$ then the oscillatory integral \eqref{oscillint1} is interpreted as
\begin{equation*}
u(x) = e^{ i \rho(x)},
\end{equation*} 
where $\rho = \rho_r + i \rho_i$ is given by \eqref{rhodef}
with $R_r, R_i \in \rr {d \times d}$ symmetric and $R_i \geq 0$. 
By \eqref{lagrangian1} the corresponding positive Lagrangian is 
\begin{equation}\label{lambda0}
\lambda = \{ (x, 2 (R_r + i R_i) x): \, x \in \cc d \} \subseteq T^* \cc d,
\end{equation}  
which gives 
\begin{equation*}
\lambda \cap T^* \rr d = \{ (x, 2 R_r x): \, x \in \rr d \}. 
\end{equation*}  
Since $WF(1)= (\rr d \setminus 0) \times \{0\}$ we obtain, 
again using \eqref{microlocal2} and recycling the argument above, 
\begin{equation*}
\begin{aligned}
WF( u ) 
& = WF ( e^{- \rho_i } e^{i \rho_r} ) \\
& \subseteq WF ( e^{i \rho_r}  ) \\
& = \chi WF ( 1 ) \\
& = \{ (x, 2 R_r x):  \, x  \in \rr d \setminus 0 \}  \\
& = (\lambda \cap T^* \rr d) \setminus \{ 0 \}. 
\end{aligned}
\end{equation*}
\end{proof}

\begin{rem}\label{remark1}
It follows from the proof that 
\begin{equation*}
\lambda \cap T^* \rr d = \widetilde \lambda \cap T^* \rr d,
\end{equation*}
where 
\begin{equation}\label{tildelambda}
\widetilde \lambda = \{ (x,\xi) \in \lambda: \, R_i x = 0 \} \subseteq \lambda. 
\end{equation}
Therefore \eqref{WFincl} could be replaced by 
\begin{equation}\label{WFincl1}
WF(u) \subseteq (\widetilde \lambda \cap T^* \rr d) \setminus \{ 0 \}, 
\end{equation}
in the conclusion of Theorem \ref{WFinclusion}, that is
it suffices to work with the smaller space $\widetilde \lambda$ instead of $\lambda$. 
We do not exclude that in particular situations it may be an advantage to deal with the smaller space 
$\widetilde \lambda$ instead of $\lambda$. 

We can characterize the subspace $\widetilde \lambda \subseteq T^* \cc d$ intrinsically as follows. 
Suppose $N \geq 1$. 
Then \eqref{lagrangian3} holds. 
Since $\lambda$ is a positive Lagrangian we have for any $X = (x,\rho'(x) + L \theta) \in \lambda$, 
since $L^t x = 0$,
\begin{align*}
0 & \leq i \sigma(\overline{(x,\rho'(x) + L \theta)},(x,\rho'(x) + L \theta)) \\
& = i \left( \la \overline{\rho'(x)}, x \ra + \la \overline{\theta}, L^t x \ra - \la \overline{x}, \rho'(x) \ra - \la \overline{L^t x}, \theta \ra \right) \\
& = i \left( \la \overline{\rho'(x)}, x \ra - \la \overline{x}, \rho'(x) \ra \right) \\
& = 2 \, \im \la  \rho'(x), \overline{x} \ra \\
& = 4 \, \la  R_i x, \overline{x} \ra. 
\end{align*} 
It follows that $R_i x = 0$ if and only if $\sigma(\overline X,X)=0$.  
In view of \eqref{tildelambda} we thus obtain the following criterion of when $X \in \lambda$ belongs to the subspace $\widetilde \lambda \subseteq \lambda$. 
\begin{equation}\label{kritlamdbatilde}
\mbox{If} \ \, X \in \lambda \ \, \mbox{then} \ X \in \widetilde \lambda \quad \Longleftrightarrow \quad \sigma(\overline X,X)=0. 
\end{equation}
We note that this criterion extends also to the case when $N=0$, i.e. when $\lambda$ is given by \eqref{lambda0}.  
\end{rem}

%%%%%%%%%%%%%%%%%%%%%%%%%%%%%%%%%%%%
\section{The kernel of the solution operator and its Gabor wave front set}
\label{kernelsec}
%%%%%%%%%%%%%%%%%%%%%%%%%%%%%%%%%%%%

In this section we will use H\"ormander's result \cite[Theorem 5.12]{Hormander2} where the Schwartz kernel of the propagator 
$e^{-t q^w(x,D)}$ is shown to be an oscillatory integral associated to the twisted positive Lagrangian given by the graph of a matrix in $\Sp(d,\co)$. 
Combined with Theorem \ref{WFinclusion} this will give an inclusion for the Gabor wave front set of the Schwartz kernel of $e^{-t q^w(x,D)}$, that will be useful in Section \ref{generalcasesec}.

In order to explain the term twisted Lagrangian we first have to look at Lagrangians in the product of symplectic spaces $T^* \cc d \times T^* \cc d$. 

Consider the symplectic form defined on $T^* \cc d \times T^* \cc d$ by
\begin{equation}\label{sympform1}
\begin{aligned}
\sigma_1(X, Y) & = \sigma(x_1,y_1) - \sigma(x_2,y_2), \quad X=(x_1,x_2) \in T^* \cc d \times T^* \cc d , \\
& \qquad \qquad \qquad \qquad \qquad \quad \ \, Y=(y_1,y_2) \in T^* \cc d \times T^* \cc d, 
\end{aligned}
\end{equation}
where $\sigma$ is the symplectic form \eqref{cansympform} for $T^* \cc d$. 
Let $\chi \in \Sp(d,\co)$. 
Since
\begin{equation*}
0 = \sigma(\chi X, \chi Y) - \sigma(X,Y) = \sigma_1( (\chi X,X); (\chi Y, Y) ), \quad X,Y \in T^* \cc d, 
\end{equation*}
the graph of $\chi$
\begin{equation*}
\mathcal G(\chi) = \{ (\chi X,X): \, X \in T^* \cc d \},
\end{equation*}
is a Lagrangian subspace in $T^* \cc d \times T^* \cc d$ with respect to the symplectic form $\sigma_1$. 

In the next result we consider three symplectic vector spaces: 

\begin{enumerate}

\item $T^* \cc d \times T^* \cc d$ equipped with the symplectic form $\sigma_1$; 

\item $T^* \cc d \times T^* \cc d$ equipped with the symplectic form  
\begin{equation*}
\begin{aligned}
\sigma_2(X, Y) & = \sigma(x_1,y_1) + \sigma(x_2,y_2), \quad X=(x_1,x_2) \in T^* \cc d \times T^* \cc d , \\
& \qquad \qquad \qquad \qquad \qquad \quad \ \, Y=(y_1,y_2) \in T^* \cc d \times T^* \cc d;
\end{aligned}
\end{equation*}

\item $T^* \cc {2d}$ equipped with its canonical symplectic form \eqref{cansympform}.

\end{enumerate}

We use the notation $(x,\xi)'=(x,-\xi)$ for $(x,\xi) \in T^* \cc d$. 

\begin{lem}\label{sympekv}
Let $\lambda \subseteq T^* \cc d \times T^* \cc d$ be a complex linear subspace. Then 
$\lambda$ is a Lagrangian subspace with respect to $\sigma_1$ if and only if 
\begin{equation}\label{twistdef}
\lambda' = \{ (X,Y') \in T^* \cc d \times T^* \cc d : \, (X,Y) \in \lambda \},
\end{equation}
is a Lagrangian subspace with respect to $\sigma_2$, if and only if 
\begin{equation*}
\widetilde \lambda' = \{ (x_1,x_2; \xi_1, \xi_2) \in T^* \cc {2d} : \, (x_1, \xi_1; x_2, \xi_2) \in \lambda' \},
\end{equation*}
is a Lagrangian subspace of $T^* \cc {2d}$. 
The transformations preserves positivity of Lagrangians. 
\end{lem}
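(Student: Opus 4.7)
The plan is to reduce all three conditions to a single observation: a linear isomorphism pulling one symplectic form back to another sends Lagrangians bijectively to Lagrangians, and, when it is real, also preserves positivity. Accordingly, I would exhibit two explicit linear bijections, a ``twist'' intertwining $\sigma_1$ with $\sigma_2$, followed by a coordinate permutation intertwining $\sigma_2$ with the canonical form on $T^*\cc{2d}$.

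First I would define $\tau : T^*\cc d \times T^*\cc d \to T^*\cc d \times T^*\cc d$ by $\tau(z_1,z_2) = (z_1, z_2')$, with $(x,\xi)' = (x,-\xi)$. A direct computation from \eqref{cansympform} yields the sign identity $\sigma(z_2',w_2') = -\sigma(z_2,w_2)$, whence
\begin{equation*}
\sigma_1(Z,W) = \sigma(z_1,w_1) - \sigma(z_2,w_2) = \sigma(z_1,w_1) + \sigma(z_2',w_2') = \sigma_2(\tau Z, \tau W)
\end{equation*}
for $Z=(z_1,z_2)$, $W=(w_1,w_2)$. Since $\tau$ is a linear bijection pulling $\sigma_2$ back to $\sigma_1$, a subspace $\lambda$ is Lagrangian with respect to $\sigma_1$ if and only if $\tau(\lambda)$ is Lagrangian with respect to $\sigma_2$; and by \eqref{twistdef} we have $\tau(\lambda) = \lambda'$, giving the first equivalence.

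Next I would introduce the coordinate permutation $\mu : (x_1,\xi_1;x_2,\xi_2) \mapsto (x_1,x_2;\xi_1,\xi_2)$ from $T^*\cc d \times T^*\cc d$ onto $T^*\cc{2d}$. Writing the canonical form \eqref{cansympform} on $T^*\cc{2d}$ as an inner product over the concatenated coordinates, one reads off at once that $\sigma_2(Z,W) = \sigma(\mu Z, \mu W)$, so $\mu$ is a symplectic isomorphism between $(T^*\cc d \times T^*\cc d, \sigma_2)$ and $T^*\cc{2d}$ with its canonical form. Applying $\mu$ to $\lambda'$ yields the second equivalence.

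Finally, for the preservation of positivity the key observation is that $\tau$ and $\mu$ are defined over $\ro$ and thus commute with complex conjugation, so $\overline{\tau Z} = \tau \overline Z$ and $\overline{\mu W} = \mu \overline W$. Consequently
\begin{equation*}
i\,\sigma_1(\overline Z, Z) \;=\; i\,\sigma_2(\overline{\tau Z}, \tau Z) \;=\; i\,\sigma(\overline{\mu \tau Z}, \mu \tau Z),
\end{equation*}
so the three quantities governing positivity coincide on corresponding elements of $\lambda$, $\lambda'$ and $\widetilde\lambda'$. I do not foresee any genuine obstacle in this proof; the only point requiring care is the sign identity $\sigma(z',w') = -\sigma(z,w)$, which is exactly what makes the twist convert a difference of symplectic forms into a sum.
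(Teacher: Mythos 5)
Your proposal is correct and follows essentially the same route as the paper: both verify by direct computation that the twist intertwines $\sigma_1$ with $\sigma_2$ and the coordinate permutation intertwines $\sigma_2$ with the canonical form on $T^*\cc{2d}$, from which the Lagrangian equivalences follow, and positivity is preserved since the maps are real and intertwine the forms. Your remark that the (real) maps commute with conjugation just makes explicit the step the paper dismisses with ``clearly preserve positivity.''
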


\begin{proof}
Let $X =(x_1,x_2) \in T^* \cc d \times T^* \cc d$ with $x_j = (x_{j 1},x_{j 2}) \in T^* \cc d$, $j=1,2$, and likewise for $Y \in T^* \cc d \times T^* \cc d$. Then 
\begin{align*}
\sigma_1 (X,Y) 
& = \sigma(x_1,y_1) - \sigma(x_2,y_2) \\
& = \la x_{1 2}, y_{1 1} \ra - \la x_{1 1}, y_{1 2} \ra
- \la x_{2 2}, y_{2 1} \ra + \la x_{2 1}, y_{2 2} \ra \\
& = \la x_{1 2}, y_{1 1} \ra - \la x_{1 1}, y_{1 2} \ra
+ (\la - x_{2 2}, y_{2 1} \ra - \la x_{2 1}, -y_{2 2} \ra) \\
& = \sigma(x_1,y_1) + \sigma(x_2',y_2') \\
& = \sigma_2((x_1,x_2'), (y_1,y_2')),  
\end{align*}
which proves the first equivalence. 

To show the second equivalence let $X, Y \in T^* \cc d \times T^* \cc d$ with $X=(x_{1 1}, x_{1 2}, x_{2 1}, x_{2 2})$, 
$\widetilde X = (\widetilde X_1,\widetilde X_2) = (x_{1 1}, x_{2 1}, x_{1 2}, x_{2 2}) \in T^* \cc {2d}$, $x_j = (x_{j 1},x_{j 2}) \in T^* \cc d$, $j=1,2$, and likewise for $Y$. We have, using the fourth and fifth equality above,
\begin{align*}
\sigma (\widetilde X,\widetilde Y) 
& = \la \widetilde X_2, \widetilde Y_1 \ra - \la \widetilde X_1, \widetilde Y_2 \ra \\
& = \la x_{1 2}, y_{1 1} \ra +  \la x_{2 2}, y_{2 1} \ra - \la x_{1 1}, y_{1 2} \ra - \la x_{2 1}, y_{2 2} \ra \\
& = \sigma_2((x_1,x_2), (y_1,y_2)),
\end{align*}
which proves the second equivalence. 
The transformations clearly preserve positivity. 
\end{proof}

In the literature Lagrangians in $T^* \cc {2d}$ with its canonical symplectic form \eqref{cansympform} are often identified with Lagrangians in $T^* \cc d \times T^* \cc d$ with the symplectic form $\sigma_2$,
due to the second equivalence in Lemma \ref{sympekv}. We adopt this convention from now on. 

By the first equivalence of Lemma \ref{sympekv} each positive Lagrangian $\lambda$ in $T^* \cc {2d}$  corresponds to a twisted positive Lagrangian $\lambda'$, defined by a change of sign in the fourth coordinate as in \eqref{twistdef}, in 
$T^* \cc d \times T^* \cc d$ with the form $\sigma_1$. 

For a quadratic form $q$ on $T^*\rr d$ defined by a symmetric matrix $Q \in \cc {2 d \times 2 d}$ with Hamiltonian $F=\J Q$ and $\re Q \geq 0$,
we show in the next lemma that the matrix $e^{-2 i t F} \in \cc {2d \times 2d}$ for $t \geq 0$ belongs to $\Sp(d,\co)$, and its graph in $T^* \cc d \times T^* \cc d$ is a positive Lagrangian with respect to the symplectic form $\sigma_1$.

\begin{lem}\label{poslemma}
Let $q$ be a quadratic form on $T^*\rr d$ defined by a symmetric matrix $Q \in \cc {2 d \times 2 d}$ with Hamiltonian $F=\J Q$ and $\re Q \geq 0$. 
Then for any $t \geq 0$
\begin{equation*}
e^{-2 i t F} \in \Sp(d,\co)
\end{equation*}
and the graph $\mathcal G( e^{-2 i t F}  )$ is a positive Lagrangian in $T^* \cc d \times T^* \cc d$ with respect to the symplectic form $\sigma_1$. 
\end{lem}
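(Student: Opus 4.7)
The plan is to verify the two assertions in turn, reducing each to a direct algebraic computation governed by the assumptions that $Q$ is symmetric and $\re Q\geq 0$. For the symplectic claim, it suffices to show that the matrix $A:=-2itF$ lies in the complex symplectic Lie algebra $\ssp(d,\co)=\{A\in\cc{2d\times 2d}:A^t\J+\J A=0\}$, since then $\chi(t):=e^{A}=e^{-2itF}\in\Sp(d,\co)$ via the standard exponential map. Writing $F=\J Q$ and using $Q^t=Q$, $\J^t=-\J$ and $\J^2=-I_{2d}$, a short computation gives $A^t\J=-2itQ$ and $\J A=2itQ$, whose sum vanishes.

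Once $\chi(t)\in\Sp(d,\co)$ is established, the graph $\mathcal G(\chi(t))$ being Lagrangian with respect to $\sigma_1$ is immediate from the identity
\begin{equation*}
\sigma_1((\chi(t)X,X),(\chi(t)Y,Y))=\sigma(\chi(t)X,\chi(t)Y)-\sigma(X,Y)=0
\end{equation*}
already recorded before Lemma~\ref{sympekv}, so only the positivity remains.

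For the positivity, I would introduce the auxiliary function $g(t):=i\sigma_1(\overline{Z(t)},Z(t))$ with $Z(t)=(\chi(t)X,X)$, noting that $g(0)=0$ since the two summands in $\sigma_1$ coincide at $t=0$, and exploit $\chi'(t)=-2iF\chi(t)$ together with $\overline{\chi(t)}'=2i\overline F\,\overline{\chi(t)}$. Differentiating and applying the polarization identities $\sigma(Y,FX)=q(Y,X)$ and $\sigma(Y,\overline FX)=\overline q(Y,X)$, the antisymmetry of $\sigma$, and $\overline q(Y,X)=\overline{q(\overline Y,\overline X)}$ (which follows from the symmetry of $\overline Q$), I expect to obtain
\begin{equation*}
g'(t)=4\,\re q\bigl(\overline{X'},X'\bigr),\qquad X':=\chi(t)X.
\end{equation*}
Decomposing $X'=a+ib$ with $a,b\in\rr{2d}$ and $Q=\re Q+i\im Q$ with both summands real symmetric, the symmetry of $\im Q$ forces the cross terms to cancel, leaving $\re q(\overline{X'},X')=\la a,\re Q\,a\ra+\la b,\re Q\,b\ra\geq 0$ by the hypothesis $\re Q\geq 0$. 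Hence $g'\geq 0$ on $[0,\infty)$, and integrating from $g(0)=0$ yields $g(t)\geq 0$, which is exactly the positivity of $\mathcal G(e^{-2itF})$.

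The main obstacle I anticipate is the bookkeeping in the derivative computation: distinguishing $q$ from $\overline q$, tracking complex conjugates through the antisymmetric form $\sigma$, and identifying $g'(t)$ correctly with a quantity controlled by $\re Q$ rather than with some mixed expression in $Q$ and $\overline Q$. It is precisely at this stage that the restriction to $t\geq 0$ (rather than $t\in\ro$) becomes essential, since the conclusion comes from integrating a nonnegative derivative from $t=0$.
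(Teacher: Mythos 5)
Your proposal is correct and follows essentially the same route as the paper: positivity is obtained exactly as in the paper's proof, by showing that $f(t)=i\bigl(\sigma(\overline{e^{-2itF}X},e^{-2itF}X)-\sigma(\overline X,X)\bigr)$ vanishes at $t=0$ and has derivative $f'(t)=4\la \re Q\,\overline{e^{-2itF}X},e^{-2itF}X\ra\geq 0$, which is nonnegative by the symmetry of $\re Q$ and the decomposition into real and imaginary parts, so integration from $t=0$ gives the claim for $t\geq 0$. The only cosmetic difference is that you verify $e^{-2itF}\in\Sp(d,\co)$ through the Lie-algebra identity $A^t\J+\J A=0$ for $A=-2itF$, whereas the paper checks the invariance of $\sigma$ directly using $\J F^t\J=F$; the two verifications are equivalent.
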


\begin{proof}
Using $F=\J Q$, $\J^{-1}=-\J= \J^t$ and the symmetry of $Q$ we have
$$
\J F^t \J = \J ( -\J F)^t = \J Q^t = F,
$$
which gives
$$
- \J e^{-2 i t F^t} \J = e^{2 i t \J F^t \J} = e^{2 i t F}. 
$$
This gives for any $X,Y \in T^* \cc d$
\begin{align*}
\sigma( e^{-2 i t F} X, e^{- 2 i t F} Y ) 
& = \la \J e^{-2 i t F} X, e^{- 2 i t F} Y \ra 
= \la e^{-2 i t F^t} \J e^{-2 i t F} X, Y \ra \\
& = \la \J (-\J) e^{-2 i t F^t} \J e^{-2 i t F} X, Y \ra \\
& = \la \J e^{2 i t F} e^{-2 i t F} X, Y \ra
= \la \J  X, Y \ra
= \sigma(X,Y), 
\end{align*}
which proves $e^{-2 i t F} \in \Sp(d,\co)$.

To show that the graph $\mathcal G( e^{-2 i t F}  )$ is a positive Lagrangian in $T^* \cc d \times T^* \cc d$ with respect to the symplectic form $\sigma_1$ we must show that 
\begin{equation*}
f(t) := i \left( \sigma( \overline{e^{-2 i t F} X}, e^{- 2 i t F} X) - \sigma(\overline X,X)  \right) \geq 0,
\end{equation*}
for any $X \in T^* \cc d$ and any $t \geq 0$. Fix $X \in T^* \cc d$. 
The function $f$ is real-valued due to the antisymmetry of the symplectic form $\sigma$. 
Since $f(0)=0$ the nonnegativity of $f$ will follow if we show $f'(t) \geq 0$ for any $t \geq 0$. 
We have
\begin{align*}
F^t \J - \J \overline F
= (-\J F)^t  - \J \overline F 
= Q - \J^2 \overline Q = Q + \overline Q = 2  \, \re Q. 
\end{align*}
This gives for any $t \geq 0$
\begin{align*}
f'(t) & = -2 \left( \sigma( \overline{F \, e^{-2 i t F} X}, e^{-2 i t F} X) - \sigma( \overline{e^{-2 i t F} X}, F e^{-2 i t F} X)  \right) \\
& = 2 \la ( F^t \J - \J \overline{F} ) \, \overline{e^{-2 i t F} X}, e^{-2 i t F} X \ra \\
& = 4 \la \re Q \, \overline{e^{-2 i t F} X}, e^{- 2 i t F} X \ra \geq 0.
\end{align*}
\end{proof}

We will now bring together the results above with H\"ormander's results \cite{Hormander2}. 
Let $q$ be a quadratic form on $T^*\rr d$ defined by a symmetric matrix $Q \in \cc {2 d \times 2 d}$ with Hamiltonian $F=\J Q$ and $\re Q \geq 0$.
According to \cite[Theorem 5.12]{Hormander2} the Schwartz kernel of the propagator $e^{-t q^w(x,D)}$ for $t \geq 0$ is an oscillatory integral of type \eqref{oscillint1}.   
More precisely we have
\begin{equation*}
e^{-t q^w(x,D)} = \cK_{e^{-2 i t F}},
\end{equation*}
where $\cK_{e^{-2 i t F}}: \cS(\rr d) \mapsto \cS'(\rr d) $ is the linear continuous operator with Schwartz kernel 
\begin{multline}\label{schwartzkernel1}
K_{e^{-2 i t F}} (x,y) \\
= (2 \pi)^{-(d+N)/2} \sqrt{\det \left( 
\begin{array}{ll}
p_{\theta \theta}''/i & p_{\theta y}'' \\
p_{x \theta}'' & i p_{x y}'' 
\end{array}
\right) } \int e^{i p(x,y,\theta)} d\theta \in \cS'(\rr {2d}),  
\end{multline}
where the quadratic form $p$ will be specified shortly. 

By \cite[Proposition~5.8]{Hormander2} $\cK_{e^{-2 i t F}}$ is in fact continuous on $\cS (\rr d)$.
The kernel $K_{e^{-2 i t F}}$ is indexed by the matrix $e^{-2 i t F}$. By Lemma \ref{poslemma}, the matrix $e^{-2 i t F}$ belongs to $\Sp(d,\co)$ and its graph 
\begin{equation}\label{graph1}
\lambda' : = \mathcal G(e^{- 2 i t F}) \subseteq T^* \cc d \times T^* \cc d,
\end{equation}
is a positive Lagrangian with respect to the symplectic form $\sigma_1$ defined by \eqref{sympform1}. 
As explained after Lemma \ref{sympekv} the Lagrangian $\lambda'$ can be twisted as in \eqref{twistdef} to give a positive Lagrangian $\lambda \subseteq T^* \cc {2d}$. 
According to \cite[Theorem 5.12 and p.~444]{Hormander2} the oscillatory integral \eqref{schwartzkernel1} is associated with the positive Lagrangian $\lambda$. 

By Proposition \ref{poslagrparam} there exists a quadratic form $p$ on $\rr {2d + N}$ defined by a symmetric Gaussian generator matrix $P \in \cc {(2d+N) \times (2d+N)}$ (cf. \eqref{Pdef}) that defines $\lambda$ by means of \eqref{lagrangian1} with $x \in \cc d$ replaced by $(x,y) \in \cc {2d}$. 
This form $p$ defines \eqref{schwartzkernel1}. 

The factor in front of the integral  \eqref{schwartzkernel1} is designed to make the oscillatory integral independent of the quadratic form $p$ on $\rr {2d + N}$, including possible changes of dimension $N$ as discussed after Proposition \ref{oscintprop}, as long as $p$ defines $\lambda$ by means of \eqref{lagrangian1} with $x \in \cc d$ replaced by $(x,y) \in \cc {2d}$. 

It is shown in \cite[p.~444]{Hormander2} that the kernel $K_{e^{-2 i t F}}$ is uniquely determined by the Lagrangian $\lambda$, apart from a sign ambiguity which is not essential for our purposes. 

If we now combine these considerations with Theorem \ref{WFinclusion} we obtain the following inclusion for the Gabor wave front set of the Schwartz kernel $K_{e^{- 2 i t F}}$ of the propagator $e^{- t q^w(x,D)}$. 

\begin{thm}\label{WFkernel}
Let $q$ be a quadratic form on $T^*\rr d$ defined by a symmetric matrix $Q \in \cc {2 d \times 2 d}$ with Hamiltonian $F=\J Q$ and $\re Q \geq 0$.
Consider the Schwartz kernel \eqref{schwartzkernel1} of the propagator $e^{-t q^w(x,D)}$ for $t \geq 0$. 
Then
\begin{equation}\label{WFkernel1}
\begin{aligned}
& WF( K_{e^{-2 i t F}} ) \\
& \subseteq \{ (x, y, \xi, -\eta) \in T^* \rr {2d} \setminus 0 : \, (x,\xi) =  e^{-2 i t F} (y,\eta), \, \im e^{- 2 i t F} (y,\eta) = 0 \}. 
\end{aligned}
\end{equation}
\end{thm}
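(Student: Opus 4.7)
The strategy is to apply Theorem \ref{WFinclusion} directly to the oscillatory integral representation \eqref{schwartzkernel1} of $K_{e^{-2itF}}$, and then translate the abstract intersection $\lambda \cap T^*\rr{2d}$ into the concrete parametric description on the right-hand side of \eqref{WFkernel1}.

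First I would observe that the prefactor in \eqref{schwartzkernel1} is a fixed complex scalar: either it vanishes, in which case the kernel is zero and there is nothing to prove, or it is nonzero, in which case multiplication by it does not affect the Gabor wave front set. Writing $K_{e^{-2itF}}$ (up to such a scalar) as the oscillatory integral $\int e^{i p(x,y,\theta)}\,d\theta$ on $\rr{2d}$ of the type studied in Section \ref{oscintsec}, Theorem \ref{WFinclusion} yields
\begin{equation*}
WF(K_{e^{-2itF}}) \subseteq (\lambda \cap T^*\rr{2d}) \setminus \{0\},
\end{equation*}
where $\lambda \subseteq T^*\cc{2d}$ is the positive Lagrangian associated with the quadratic phase $p$ via \eqref{lagrangian1}.

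Next I would unwind the chain of identifications recalled in the paragraphs following Lemma \ref{poslemma}. By Hörmander's result in \cite{Hormander2} together with Lemmas \ref{sympekv} and \ref{poslemma}, this $\lambda$ is obtained from the graph $\lambda' = \mathcal{G}(e^{-2itF}) \subseteq T^*\cc d \times T^*\cc d$ by applying the twist $((x,\xi),(y,\eta)) \mapsto ((x,\xi),(y,-\eta))$ from \eqref{twistdef}, followed by the reordering $(x_1,\xi_1,x_2,\xi_2) \mapsto (x_1,x_2,\xi_1,\xi_2)$ that identifies $T^*\cc d \times T^*\cc d$ (with $\sigma_2$) and $T^*\cc{2d}$. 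Tracing a generic element $((x,\xi),(y,\eta))$ of $\lambda'$, characterized by $(x,\xi) = e^{-2itF}(y,\eta)$, through these two bijections gives the explicit parametrization
\begin{equation*}
\lambda = \{(x,y,\xi,-\eta) \in T^*\cc{2d} : (x,\xi) = e^{-2itF}(y,\eta), \ (y,\eta) \in T^*\cc d \}.
\end{equation*}

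Finally I would intersect $\lambda$ with the real subspace $T^*\rr{2d}$. Since $y$ and $-\eta$ are free coordinates in this parametrization, their reality forces $(y,\eta) \in T^*\rr d$, and then reality of $(x,\xi) = e^{-2itF}(y,\eta)$ is precisely the condition $\im e^{-2itF}(y,\eta) = 0$. This reproduces the set on the right-hand side of \eqref{WFkernel1}, and combining with the inclusion from Theorem \ref{WFinclusion} completes the proof. The only delicate point is the bookkeeping with the sign conventions in the twist \eqref{twistdef} and the reordering, which must be done carefully to ensure that the minus sign lands on the fourth coordinate $\eta$ rather than on $\xi$, consistently with the twisted Gabor wave front set notation $WF(K_T)'$ introduced in Section~1.
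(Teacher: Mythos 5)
Your proposal is correct and follows essentially the same route as the paper's own proof: identify the Lagrangian associated to the oscillatory integral \eqref{schwartzkernel1} as the twisted graph of $e^{-2itF}$ via Lemma \ref{sympekv} and the discussion after Lemma \ref{poslemma}, then apply Theorem \ref{WFinclusion} and take real points. The extra remarks about the scalar prefactor and the sign bookkeeping are harmless refinements of the same argument.
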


\begin{proof}
By the discussion above, \eqref{graph1} and Lemma \ref{sympekv} the Lagrangian $\lambda  \subseteq T^* \cc {2d}$ associated to the oscillatory integral \eqref{schwartzkernel1} is the twisted graph
\begin{equation}\label{twistgraph}
\lambda = \{ (x, y, \xi, -\eta) \in T^* \cc {2d}: \, (x,\xi) =  e^{-2 i t F} (y,\eta) \}. 
\end{equation}
The result now follows from Theorem \ref{WFinclusion}. 
\end{proof}

\begin{rem}\label{remark2}
As we saw in Remark \ref{remark1} we could replace the positive Lagrangian $\lambda$ by the subspace $\widetilde \lambda \subseteq \lambda$ in the use of Theorem \ref{WFinclusion}. 
For the positive Lagrangian $\lambda'$, defined by the graph \eqref{graph1} in $T^* \cc d \times T^* \cc d \simeq T^* \cc {2d}$ of the matrix $e^{-2 i t F}$, equipped with the symplectic form $\sigma_1$ defined by \eqref{sympform1}, the criterion $\sigma(\overline X,X)=0$ in \eqref{kritlamdbatilde} is
\begin{equation}\label{krittildebidim}
\sigma( \overline{e^{- 2 i t F} (y,\eta)}, e^{- 2 i t F} (y,\eta)) = \sigma( \overline{(y,\eta)}, (y,\eta)). 
\end{equation}
In view of \eqref{kritlamdbatilde} this gives for $\lambda$ defined by \eqref{twistgraph}
\begin{equation}\label{tildelambdaK}
\begin{aligned}
\widetilde \lambda & = \{ (x,y,\xi,-\eta) \in T^* \cc {2d} : \, (x,\xi) = e^{-2 i t F} (y,\eta), \\
& \qquad \qquad \quad \sigma(\overline{(x,\xi)},(x,\xi)) = \sigma(\overline{(y,\eta)},(y,\eta) ) \}
\end{aligned}
\end{equation}
and \eqref{WFkernel1} can hence be alternatively formulated
\begin{equation}\label{WFkernel1a}
WF( K_{e^{-2 i t F}} ) \subseteq  (\widetilde \lambda \cap T^* \rr {2d}) \setminus \{ 0 \}. 
\end{equation}
\end{rem}

%%%%%%%%%%%%%%%%%%%%%%%%%%%%%%%%%%
\section{Propagation of Gabor singularities in the singular space}
\label{generalcasesec}
%%%%%%%%%%%%%%%%%%%%%%%%%%%%%%%%%%

In this section, we prove our general result on propagation of Gabor singularities in the singular space. 
This is achieved using Theorem \ref{WFkernel}. 
Secondly, we sort out the special case when the Poisson bracket $\{q, \overline q\} = 0$ is zero.

In the following lemma we use the relation mapping between two sets $A \subseteq X_1 \times X_2$ and $B \subseteq X_2$,
\begin{equation*}
A \circ B = \{ X \in X_1 : \, \exists Y \in B : \, (X,Y) \in A \} \subseteq X_1. 
\end{equation*}

\begin{lem}\label{propagationsing2}
Let $q$ be a quadratic form on $T^*\rr d$ defined by a symmetric matrix $Q \in \cc {2d \times 2d}$, $F=\J Q$ and $\re Q \geq 0$. 
Suppose $K_{e^{-2 it F}}$ is the oscillatory integral \eqref{schwartzkernel1} associated to the positive Lagrangian $\lambda$ whose twisted positive Lagrangian $\lambda'$ is \eqref{graph1}.
Then for $u \in \cS'(\rr d)$
\begin{align*}
WF(e^{- t q^w(x,D)}u) 
& \subseteq  WF( K_{e^{-2 i t F}} )' \circ WF(u) \\
& \subseteq  e^{-2 i t F} \left( WF(u) \cap \Ker (\im e^{-2 i t F} ) \right), \quad t \geq 0. 
\end{align*}
\end{lem}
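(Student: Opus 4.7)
The plan is to assemble the lemma from two ingredients already available: the general microlocal inclusion \eqref{WFpropbasic} for the Gabor wave front set of $Tu$ in terms of the twisted Gabor wave front set of the Schwartz kernel $K_T$, and the description of $WF(K_{e^{-2itF}})$ provided by Theorem \ref{WFkernel}.

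First, I would note that the propagator $e^{-tq^w(x,D)}=\cK_{e^{-2itF}}$ is continuous from $\cS(\rr d)$ into $\cS'(\rr d)$ and admits the Schwartz kernel $K_{e^{-2itF}} \in \cS'(\rr{2d})$ given by the oscillatory integral \eqref{schwartzkernel1}. Applying \eqref{WFpropbasic} with $T=e^{-tq^w(x,D)}$ then yields immediately the first inclusion
\[
WF(e^{-tq^w(x,D)}u) \subseteq WF(K_{e^{-2itF}})' \circ WF(u).
\]

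Next, I would unfold the twist operation and the relation composition and feed Theorem \ref{WFkernel} into them. By definition of the twist, $(x,\xi;y,\eta) \in WF(K_{e^{-2itF}})'$ means $(x,y,\xi,-\eta) \in WF(K_{e^{-2itF}})$, so Theorem \ref{WFkernel} gives
\[
WF(K_{e^{-2itF}})' \subseteq \bigl\{ (x,\xi;y,\eta) \in T^*\rr{2d} : (x,\xi) = e^{-2itF}(y,\eta), \ \im e^{-2itF}(y,\eta)=0 \bigr\}.
\]
By the definition of the relation composition $\circ$, an element $(x,\xi)$ of $WF(K_{e^{-2itF}})' \circ WF(u)$ must then satisfy $(x,\xi) = e^{-2itF}(y,\eta)$ for some $(y,\eta) \in WF(u)$ such that $\im e^{-2itF}(y,\eta)=0$. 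Since $(y,\eta)$ is a real vector, this last condition is exactly $(y,\eta) \in \Ker(\im e^{-2itF})$, in which case $(x,\xi) = \re e^{-2itF}(y,\eta)$ is automatically real. This yields the second inclusion
\[
WF(K_{e^{-2itF}})' \circ WF(u) \subseteq e^{-2itF}\bigl( WF(u) \cap \Ker(\im e^{-2itF}) \bigr).
\]

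The argument is essentially bookkeeping and no serious obstacle is anticipated; the only subtle point is to keep track of the sign flip in the fourth coordinate introduced by the twist, and to read the condition that the image $e^{-2itF}(y,\eta)$ of a real phase-space point $(y,\eta)$ lies in $T^*\rr d$ as precisely the vanishing of $\im e^{-2itF}$ on $(y,\eta)$.
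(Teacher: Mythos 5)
Your overall strategy is exactly the paper's: combine the kernel theorem (Theorem \ref{WFkernel}) with H\"ormander's composition result for the Gabor wave front set, and then unwind the twist to get the second inclusion. Your treatment of the second inclusion is correct and coincides with the paper's computation.

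There is, however, one genuine gap: the first inclusion is not ``immediate'' from \eqref{WFpropbasic}. As the paper itself states, \eqref{WFpropbasic} is only the \emph{rough} form of the precise result, \cite[Proposition 2.11]{Hormander1}, and that proposition has hypotheses on the wave front set of the Schwartz kernel which must be verified before the inclusion
\begin{equation*}
WF(e^{-tq^w(x,D)}u) \subseteq WF(K_{e^{-2itF}})' \circ WF(u)
\end{equation*}
can be invoked: one needs that $WF(K_{e^{-2itF}})$ contains no points of the form $(x,0,\xi,0)$ with $(x,\xi)\neq(0,0)$ and no points of the form $(0,y,0,-\eta)$ with $(y,\eta)\neq(0,0)$ (under the twist these are the points over the two ``zero sections'', which would obstruct the composition of relations). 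Your proposal skips this check entirely. It is easy to supply, and the paper does so: by Theorem \ref{WFkernel} any point $(x,y,\xi,-\eta)\in WF(K_{e^{-2itF}})$ satisfies $(x,\xi)=e^{-2itF}(y,\eta)$, and since $e^{-2itF}$ is an invertible matrix, $(x,\xi)=(0,0)$ if and only if $(y,\eta)=(0,0)$; hence neither type of forbidden point can occur. With this verification inserted before appealing to \cite[Proposition 2.11]{Hormander1}, your argument becomes complete and is essentially identical to the paper's proof.
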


\begin{proof}
Consider the Schwartz kernel $K_{e^{-2 i t F}}$ of the propagator $e^{-t q^w(x,D)}$. 
Since $e^{-2 i t F} \in \cc {2d \times 2d}$ is an invertible matrix, it follows from Theorem \ref{WFkernel} that the wave front set
$WF( K_{e^{-2 i t F}} )$ contains no points of the form $(0,y,0,-\eta)$ for $(y,\eta) \in T^* \rr d \setminus 0$, nor points of the form $(x,0,\xi,0)$ for $(x,\xi) \in T^* \rr d \setminus 0$. 
Thus \cite[Proposition 2.11]{Hormander1} gives the microlocal inclusion
\begin{equation}\label{composition1}
WF(e^{-t q^w(x,D)}u) 
\subseteq  WF( K_{e^{-2 i t F}} )' \circ WF(u), 
\end{equation}
expressed using the twisted Gabor wave front set of $K_{e^{-2 i t F}}$. 
From this we obtain using Theorem \ref{WFkernel}
\begin{align*}
& WF(e^{-t q^w(x,D)}u) \\
& \subseteq \{ (x,\xi) \in T^* \rr d: \, \exists (y,\eta) \in WF(u): \,  (x,y,\xi,-\eta) \in WF( K_{e^{-2 i t F}})  \} \\
& \subseteq e^{-2 i t F} \left( WF(u) \cap \Ker (\im e^{-2 i t F} ) \right),
\end{align*}
where as usual we understand $\Ker (\im e^{-2 i t F} )= \Ker (\im e^{-2 i t F}) \cap T^* \rr d$. 
\end{proof}

In what remains of this section, we prove a sharpening of Lemma \ref{propagationsing2} that is expressed using the notion of singular space. To that end, we notice from the definition (\ref{h1}) that the singular space is also equal to the following infinite intersection of kernels
\begin{equation}\label{h1g}
S=\Big(\bigcap_{j=0}^{+\infty} \Ker \big[\re F(\im F)^j \big]\Big) \cap T^*\rr d \subseteq T^*\rr d, 
\end{equation}
since the Cayley--Hamilton theorem implies that 
$$
(\im F)^k X \in \textrm{Span}(X,...,(\im F)^{2d-1}X),
$$
for all $X \in T^*\rr d$ and $k \geq 0$.
From the description (\ref{h1g}), we observe that the singular space enjoys the following stability properties
\begin{equation}\label{ee2}
(\re F) S=\{0\}, \quad (\im F)S \subseteq S. 
\end{equation}
We are now in a position to state and prove the announced sharpening of Lemma~\ref{propagationsing2} that is our general result on propagation of Gabor singularities. This result says that the singularities 
actually only propagate inside the singular space $S$, following the integral curves of the Hamilton vector field $H_{\textrm{Im}q}$ associated to the imaginary part of the Weyl symbol.

\medskip

\begin{thm}\label{th1}
Let $q$ be a quadratic form on $T^*\rr d$ defined by a symmetric matrix $Q \in \cc {2d \times 2d}$ satisfying $\re Q \geq 0$, and $F = \J Q$ its Hamilton map. 
Then the following microlocal inclusion holds for all $u \in \mathscr{S}'(\rr d)$, $t > 0$,
\begin{align*}
WF\big(e^{-tq^w(x,D)}u\big) \subseteq \left( \exp(tH_{\emph{\textrm{Im}}q})\big(WF(u) \cap S\big) \right) \cap S, 
\end{align*}
where $S$ denotes the singular space of $q$ and where $\exp(tH_{\emph{\textrm{Im}}q})=e^{2t \im F}$ stands for the flow of 
the Hamilton vector field $H_{\emph{\textrm{Im}}q}$ associated to $\im q$.
\end{thm}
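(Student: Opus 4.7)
Our approach is to combine the one-step bound of Lemma~\ref{propagationsing2} with an analyticity argument that identifies the singular space $S$ with the intersection of the kernels $\Ker(\im e^{-2isF}) \cap T^*\rr d$ over $s$ in any nonempty open interval. First I record the key algebraic identity on $S$: for $X \in S$, the stability properties \eqref{ee2} give $\re F \cdot X = 0$ and $(\im F) X \in S$, so $FX = i(\im F)X$, and by induction $F^k X = i^k (\im F)^k X$ for all $k \geq 0$. Exponentiating termwise yields $e^{-2itF} X = e^{2t\im F} X = \exp(tH_{\im q}) X$ for every $t \in \ro$; in particular $S \subseteq \Ker(\im e^{-2itF}) \cap T^*\rr d$ and $e^{-2itF}$ acts on $S$ as the real Hamilton flow.

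For the converse inclusion I claim that if $X \in T^*\rr d$ satisfies $\im(e^{-2isF} X) = 0$ for $s$ in some nonempty open interval $I \subseteq \ro$, then $X \in S$. Writing $A = \re F$, $B = \im F$ and $e^{-2isF} X = Y_r(s) + i Y_i(s)$, the real-analytic functions $Y_r, Y_i \colon \ro \to \rr{2d}$ satisfy
\begin{equation*}
\dot Y_r = 2BY_r + 2AY_i, \qquad \dot Y_i = -2AY_r + 2BY_i, \qquad Y_r(0) = X, \quad Y_i(0) = 0.
\end{equation*}
Vanishing of $Y_i$ on $I$ forces $Y_i \equiv 0$ by real analyticity; the second equation then gives $AY_r \equiv 0$, the first reduces to $\dot Y_r = 2BY_r$, and so $Y_r(s) = e^{2sB} X$. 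Hence $A e^{2sB} X = 0$ for every $s \in \ro$, and expanding the exponential yields $AB^k X = 0$ for all $k \geq 0$, i.e., $X \in S$ by \eqref{h1bis}.

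Now fix $t > 0$. Applying Lemma~\ref{propagationsing2} to the semigroup splitting $e^{-tq^w(x,D)} u = e^{-sq^w(x,D)} \bigl(e^{-(t-s) q^w(x,D)} u\bigr)$ gives, for each $s \in (0, t]$, $WF(e^{-tq^w(x,D)}u) \subseteq e^{-2isF}(\Ker(\im e^{-2isF}) \cap T^*\rr d)$. Any element $Z$ of this image has the form $Z = e^{-2isF} Y$ for some $Y \in T^*\rr d$, so $Z \in T^*\rr d$ and $e^{2isF} Z = Y \in T^*\rr d$. Thus $\im(e^{2isF} Z) = 0$ for every $s \in (0, t]$; substituting $\sigma = -s \in [-t, 0)$ places the problem in the setting of the second paragraph with interval $(-t, 0)$, yielding $Z \in S$ and hence $WF(e^{-tq^w(x,D)}u) \subseteq S$.

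Finally, apply Lemma~\ref{propagationsing2} once more at time $t$: any $X \in WF(e^{-tq^w(x,D)}u)$ may be written $X = e^{-2itF} Y$ with $Y \in WF(u) \cap \Ker(\im e^{-2itF}) \subseteq T^*\rr d$. Since $X \in S$, the identity of the first paragraph applied with $-t$ in place of $t$ gives $Y = e^{2itF} X = e^{-2t\im F} X$, which lies in $S$ because $\im F$ preserves $S$; hence $Y \in WF(u) \cap S$ and $X = e^{2t\im F} Y = \exp(tH_{\im q}) Y$. The inclusion into $S \setminus \{(0,0)\}$ is immediate since $e^{2t\im F}$ is a bijection on $T^*\rr d$ preserving $S$ and fixing the origin. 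The main obstacle is the analyticity identification of $S$ in the second paragraph; once in hand, the semigroup bootstrap and the concluding refinement assemble without further difficulty.
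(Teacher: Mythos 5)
Your proposal is correct and follows essentially the same route as the paper: the one-step bound of Lemma~\ref{propagationsing2}, the semigroup splitting at every intermediate time $s\in(0,t]$ to force membership in $\Ker(\im e^{2isF})$ for all such $s$, the identification of this constraint with the singular space $S$, and the observation that on $S$ one has $e^{-2itF}X=e^{2t\im F}X$, which converts the complex flow into $\exp(tH_{\im q})$. The only differences are cosmetic: you prove the identification step by the ODE system for $\re$ and $\im$ of $e^{-2isF}X$ together with real analyticity, where the paper expands the power series and runs the induction giving $\bigcap_{k\geq 1}\Ker\big[\im\big((iF)^k\big)\big]\cap T^*\rr d=S$ (the two arguments are equivalent), and you use the semigroup law $e^{-tq^w(x,D)}u=e^{-sq^w(x,D)}\big(e^{-(t-s)q^w(x,D)}u\big)$ on $\cS'(\rr d)$ without comment, which holds only up to an immaterial sign and is justified in the paper via H\"ormander's results on the kernels $\cK_{e^{-2itF}}$.
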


\medskip

\begin{proof}
By Lemma \ref{propagationsing2} we have 
\begin{equation}\label{we1}
WF\big(e^{-t q^w(x,D)}u\big) \subseteq e^{-2itF}\big(WF(u) \cap \Ker (\im e^{-2itF})\big),
\end{equation}
for $u \in \mathscr{S}'(\rr d)$ and $t \geq 0$, 
with $\im e^{-2itF}=(2i)^{-1}(e^{-2itF}-e^{2it\overline{F}})$. 
Since $WF(u) \subseteq T^*\rr d$, we observe that 
\begin{align}\label{we0}
 & \ e^{-2itF}\big(WF(u) \cap \Ker (\im e^{-2itF})\big)\\ \notag
= & \ e^{-2itF}\big(WF(u) \cap \Ker (\im e^{-2itF}) \cap T^*\rr d \big)\\ \notag
= & \ \big(e^{-2itF}WF(u)\big) \cap \Big(e^{-2itF}\big[\Ker (\im e^{-2itF}) \cap T^*\rr d \big]\Big)\\ \notag
= & \ \big(e^{-2itF}WF(u)\big) \cap \Ker (\im e^{2itF}) \cap T^*\rr d,
\end{align}
because 
$$
X \in \Ker (\im e^{2itF}) \cap T^*\rr d \quad \Longleftrightarrow \quad Y \in \Ker (\im e^{-2itF}) \cap T^*\rr d,$$
when $X=e^{-2itF}Y$.
It follows from \eqref{we0} that \eqref{we1} may be rephrased as
\begin{equation}\label{we-1}
WF\big(e^{-tq^w(x,D)}u\big) \subseteq
\big(e^{-2itF}WF(u)\big) \cap \Ker (\im e^{2itF}) \cap T^*\rr d, 
\end{equation}
for all $u \in \mathscr{S}'(\rr d)$ and $t \geq 0$. 
We deduce from \cite[Propositions~5.8 and~5.9]{Hormander2} and Lemma \ref{poslemma} that for all $t \geq 0$, the linear continuous operator 
$$\mathscr{K}_{e^{-2itF}} : \mathscr{S}(\rr d) \rightarrow \mathscr{S}'(\rr d),$$ is a continuous map on $\mathscr{S}(\rr d)$, and extends to a continuous map in $\mathscr{S}'(\rr d)$, satisfying
\begin{equation}\label{sd2}
\mathscr{K}_{e^{-2i(t_1+t_2)F}}=\pm \mathscr{K}_{e^{-2it_1F}}\mathscr{K}_{e^{-2it_2F}},
\end{equation}
for all $t_1,t_2 \geq 0$. It follows from \cite[Theorem~5.12]{Hormander2} and \eqref{sd2} that 
\begin{equation}\label{sd3}
e^{-(t_1+t_2)q^w(x,D)}=\pm e^{-t_1q^w(x,D)}e^{-t_2q^w(x,D)},
\end{equation}
for all $t_1,t_2 \geq 0$. 
We deduce from \eqref{we-1} and \eqref{sd3} that for all $u \in \cS'(\rr d)$ and $t=t_1+t_2 \geq 0$, with $t_1,t_2 \geq 0$,
\begin{align}\label{we3}
& \ WF\big(e^{-tq^w(x,D)}u\big)=WF\big(e^{-t_1q^w(x,D)}e^{-t_2q^w(x,D)}u\big) \\ \notag
\subseteq & \ \big[e^{-2it_1F}WF\big(e^{-t_2q^w(x,D)}u\big)\big] \cap \Ker (\im e^{2it_1F}) \cap T^*\rr d \\ \notag
\subseteq & \ \Big[e^{-2it_1F}\Big(\big(e^{-2it_2F}WF(u)\big) \cap \Ker (\im e^{2it_2F}) \cap T^*\rr d \Big)\Big] \cap \Ker (\im e^{2it_1F}) \cap T^*\rr d \\ \notag
\subseteq & \ \big(e^{-2itF}WF(u)\big) \cap \big[e^{-2it_1F}\big(\Ker (\im e^{2it_2F}) \cap T^*\rr d \big)\big] \cap \Ker (\im e^{2it_1F}) \cap T^*\rr d \\ \notag
\subseteq & \ \big(e^{-2itF}WF(u)\big) \cap \Ker (\im e^{2it_1F}) \cap T^*\rr d.
\end{align}
We deduce from \eqref{we3} that for all $t \geq 0$,
\begin{equation}\label{we7}
WF\big(e^{-tq^w(x,D)}u\big) \subseteq \big(e^{-2itF}WF(u)\big) \cap \Big(\bigcap_{0 \leq s \leq t} \Ker (\im e^{2isF})\Big)\cap T^*\rr d.
\end{equation}
We aim at establishing the following formula for the singular space
\begin{equation}\label{qw1}
\forall t>0, \quad S=\Big(\bigcap_{0 \leq s \leq t} \Ker (\im e^{2isF})\Big)\cap T^*\rr d.
\end{equation}
Let $t>0$. We notice that
$$
X \in \Big(\bigcap_{0 \leq s \leq t} \Ker (\im e^{2isF})\Big)\cap T^*\rr d,
$$
if and only if $X \in T^*\rr d$ is such that the function 
$$f(s)=(\im e^{2isF})X=\im (e^{2isF}X)=\im \Big(\sum_{k=0}^{+\infty}\frac{(2isF)^kX}{k!}\Big)=\sum_{k=1}^{+\infty}\frac{2^k\textrm{Im}\big((iF)^k\big)X}{k!}s^k,$$
is zero on the interval $[0,t]$. It implies that for all $t>0$,
\begin{equation}\label{sd6}
\Big(\bigcap_{0 \leq s \leq t} \Ker (\im e^{2isF})\Big)\cap T^*\rr d= \Big(\bigcap_{k= 1}^{+\infty} \Ker \big[\im \big((iF)^k\big)\big]\Big)\cap T^*\rr d.
\end{equation}

Next we check by induction that 
\begin{equation}\label{basis}
\Big(\bigcap_{k=1}^m \Ker \big[\im \big((iF)^k\big)\big]\Big)\cap T^*\rr d
=\Big(\bigcap_{k=0}^{m-1} \Ker \big[\re F(\im F)^k\big]\Big)\cap T^*\rr d,
\end{equation}
holds for all integers $m \geq 1$. 
When $m=1$ the formula \eqref{basis} is trivial since $\im (iF) = \re F$. 
Assume that formula \eqref{basis} holds for $m \geq 1$. By expanding the product 
$$
(iF)^{m+1}=(-\im F+i \re F)^{m+1}
$$ 
and taking the imaginary part, we notice that each term in this sum necessarily contains at least one factor $\re F$, so that 
$$
\big[\im \big((iF)^{m+1}\big)\big]X=\big[\im \big((-\im F+i \re F)^{m+1}\big)\big]X=(-1)^m\re F(\im F)^mX,
$$
when 
$$X \in \Big(\bigcap_{k=0}^{m-1} \Ker \big[\re F(\im F)^k\big]\Big)\cap T^*\rr d.$$
This shows that the formula \eqref{basis} holds also for $m+1$. 
By induction  \eqref{basis} holds for any integer $m \geq 1$.

On the other hand, we observe from \eqref{h1} and \eqref{h1g} that 
\begin{equation}\label{cay}
\forall m \geq 2d-1, \quad S=\Big(\bigcap_{k=0}^{m} \Ker \big[\re F(\im F)^k\big]\Big)\cap T^*\rr d.
\end{equation}
We deduce from \eqref{basis} and \eqref{cay} that 
$$
\forall m \geq 2d, \quad S=\Big(\bigcap_{k=1}^m \Ker \big[\textrm{Im}\big((iF)^k\big)\big]\Big)\cap T^*\rr d.
$$
This implies that 
$$
S=\Big(\bigcap_{k=1}^{+\infty} \Ker \big[\im \big((iF)^k\big)\big]\Big)\cap T^*\rr d.
$$
According to \eqref{sd6}, this proves formula \eqref{qw1}. We deduce from \eqref{we7} and \eqref{qw1} that 
\begin{equation}\label{we8}
\forall t > 0, \quad WF\big(e^{-tq^w(x,D)}u\big) \subseteq \big(e^{-2itF}WF(u)\big) \cap S.
\end{equation}
Let $t>0$ and $X \in \big(e^{-2itF}WF(u)\big) \cap S$. We can find $Y \in WF(u) \subseteq T^*\rr d$ such that $X=e^{-2itF}Y \in S \subseteq T^*\rr d$. It follows from (\ref{h1g}) that 
\begin{equation}\label{sd7}
Y=e^{2itF}X=\sum_{k=0}^{+\infty}\frac{(2itF)^kX}{k!}=\sum_{k=0}^{+\infty}\frac{(-2t \im F)^kX}{k!}=e^{-2t \im F}X,
\end{equation}
since $(iF)^kX=(-\im F)^kX$, when $\re F(\im F)^jX=0$ for all $j \geq 0$. We deduce that 
$$
X=e^{2t \im F}Y \in \big(e^{2t \im F}WF(u)\big) \cap S.
$$
Conversely, we observe from \eqref{sd7} that if $X \in \big(e^{2t \im F}WF(u)\big) \cap S$, then 
$$
Y=e^{-2t \im F}X=e^{2itF}X \in WF(u).
$$
This implies that 
\begin{equation}\label{we9}
\begin{aligned}
\forall t > 0, \quad \big(e^{-2itF}WF(u)\big) \cap S 
& = \big(e^{2t \im F}WF(u)\big) \cap S \\
& = e^{2t \im F}\big(WF(u) \cap (e^{-2t \im F}S)\big).
\end{aligned}
\end{equation}
On the other hand, we deduce from \eqref{ee2} that 
\begin{equation*}
e^{-2t \im F}S \subseteq S, 
\end{equation*}
since $S$ is invariant with respect to $\im F$. The linear mapping $e^{-2t \im F}$ is invertible so its restriction to the singular space defines an injective endomorphism of $S$. Since $S$ is a finite-dimensional vector space, we deduce that $e^{-2t \im F}|_S : S \rightarrow S$ is surjective:
\begin{equation}\label{we10}
e^{-2t \im F}S=S.
\end{equation}
It follows from \eqref{we8}, \eqref{we9}
and \eqref{we10} that 
\begin{equation}\label{we11}
\forall t > 0, \quad WF\big(e^{-tq^w(x,D)}u\big) \subseteq \left( e^{2t \im F} \left( WF(u) \cap S \right) \right) \cap S.
\end{equation}
We observe that the Hamilton vector field associated to the imaginary part of the quadratic symbol $q$ is equal to 
$$
H_{\im q}=\mathcal{J}(\nabla_{x,\xi}\im q)=\mathcal{J}(-2 \J \im F)=2\im F.
$$ 
We deduce that the inclusion \eqref{we11} also reads as 
$$
\forall t > 0, \quad WF\big(e^{-tq^w(x,D)}u\big) \subseteq \left( \exp(tH_{\im q}) \left( WF(u) \cap S \right) \right) \cap S,
$$
where $H_{\im q}$ stands for the Hamilton vector field associated to the imaginary part of $q$.
\end{proof}

Finally we consider the particular case when the quadratic operator $q^w(x,D)$ commutes with its formal adjoint $\overline{q}^w(x,D)$ when acting on $\mathscr{S}(\rr d)$,
$$
q^w(x,D) \overline{q}^w(x,D) = \overline{q}^w(x,D) q^w(x,D) u, \quad u \in \cS (\rr d). 
$$
This condition of normality is equivalent to the condition on the Poisson bracket 
$$
\{q,\overline{q}\} 
= \la q_{\xi}', \overline q_{x}' \ra - \la q_{x}', \overline q_{\xi}' \ra 
= 2i\{\im q,\re q\}=0.
$$ 
The condition is also equivalent to $[\re F,\im F]=0,$
since the Hamilton map of the Poisson bracket $\{\im q,\re q\}$ is given by $-2[\im F,\re F]$, see e.g. \cite[Lemma~2]{Pravda-Starov1}.
In this case, we deduce from the definition \eqref{h1} that the singular space reduces to 
$$
S=\Ker (\re F)\cap T^*\rr d.
$$
We obtain the following corollary:

\medskip

\begin{cor}\label{co1}
Let $q$ be a quadratic form on $T^*\rr d$ defined by a symmetric matrix $Q \in \cc {2d \times 2d}$ satisfying $\re Q \geq 0$, and $F = \J Q$ its Hamilton map.
When the Poisson bracket $\{q,\overline{q}\}=0$ is identically equal to zero, then the following microlocal inclusion holds for all $u \in \cS'(\rr d)$, $t>0$,
\begin{equation*}
WF\big(e^{-tq^w(x,D)}u\big) 
\subseteq \left( \exp(tH_{\emph{\textrm{Im}}q})\big( WF(u) \cap \Ker (\re F) \big) \right)
 \cap \Ker (\re F),
\end{equation*}
where $\exp(tH_{\emph{\textrm{Im}}q})=e^{2t \im F}$ stands for the flow of 
the Hamilton vector field $H_{\emph{\textrm{Im}}q}$ associated to $\im q$.
\end{cor}

%%%%%%%%%%%%%%%%%%
\section{Some particular equations}
\label{secparticulareq}
%%%%%%%%%%%%%%%%%%

In this section, we look at propagation of the Gabor wave front set according to Theorem \ref{th1} and Corollary \ref{co1} in some particular cases of quadratic forms $q$. 

\subsection{A first example}

Let us consider the equation
\begin{equation}\label{heatvariant1}
\partial_t u(t,x) + \la x,A x \ra u(t,x) = 0, \quad t \geq 0, \quad x \in \rr d, 
\end{equation}
where $0 \leq A \in \rr {d \times d}$ is symmetric. 
It is a particular case of the general equation \eqref{schrodeq} where the quadratic form $q(x,\xi) = \la x, A x \ra$ 
is defined by the matrix
\begin{equation*}
Q = \left(
\begin{array}{cc}
A & 0 \\
0 & 0
\end{array}
\right),
\end{equation*}
and the Hamiltonian is
\begin{equation*}
F = \J Q = \left(
\begin{array}{cc}
0 & 0 \\
-A & 0
\end{array}
\right). 
\end{equation*}
The solution operator is 
\begin{equation*}
e^{-t q^w(x,D)} u (x) 
= e^{- t \la x, A x \ra} u(x). 
\end{equation*}
We have $\Ker (\re F) = \Ker A \times \rr d$ so Corollary \ref{co1} gives
\begin{equation}\label{propagation1}
WF( e^{- t \la \cdot, A \cdot \ra}  u) 
\subseteq  WF(u) \cap ( \Ker A \times \rr d ), \quad t > 0, \quad u \in \cS'(\rr d). 
\end{equation}
This example gives a generalization of \eqref{example3} as follows. 
Let $t=1$ and $u=1$. By \eqref{example2} and \eqref{propagation1} we have
\begin{equation}\label{propagation1a}
WF( e^{- \la \cdot, A \cdot \ra} ) 
\subseteq \Ker A \setminus 0 \times \{ 0 \}. 
\end{equation}
To see the opposite inclusion, let $x \in \Ker A \setminus 0$ and let $\fy(y) = e^{-|y|^2}$ for $y \in \rr d$. 
The STFT evaluated at $(t x, 0) \in T^* \rr d$ is then for any $t>0$
\begin{align*}
V_\fy (e^{- \la \cdot, A \cdot \ra} )( t x, 0)
& = \int_{\rr d} e^{- \la y, A y \ra - |y-tx|^2 } \, dy \\
& = \int_{\rr d} e^{- \la tx+ y, A (tx+y) \ra - |y|^2  } \, dy \\
& = \int_{\rr d} e^{- \la y, A y \ra - |y|^2 } \, dy, 
\end{align*}
which is a positive constant that does not depend on $t>0$. 
The STFT cannot therefore decay superpolynomially in any cone in $T^* \rr d$ containing $(t x, 0)$. 
This proves the opposite inclusion to \eqref{propagation1a} so we have
\begin{equation}\label{propagation1b}
WF( e^{- \la \cdot, A \cdot \ra} ) 
= \Ker A \setminus 0 \times \{ 0 \} \subseteq T^* \rr d. 
\end{equation}
Let $A \in \cc {d \times d}$ be symmetric with $\im A \geq 0$. Since $e^{i \la x, \re A x \ra/2 } = \mu(\chi)$
where 
\begin{equation*}
\chi = 
\left(
\begin{array}{ll}
I_d & 0 \\
\re A  & I_d 
\end{array}
\right) \in \Sp(d,\ro),
\end{equation*}
(cf. \eqref{symplecticoperator} and \eqref{chidef}),  
we obtain from \eqref{symplecticinvarianceWF} and \eqref{propagation1b}
\begin{equation*}
\begin{aligned}
WF(  e^{i \la x, A x \ra/2 }  ) 
& = WF(  e^{i \la x, \re A x \ra/2 } e^{- \la x, \im A x \ra/2 } ) \\
& = \chi WF ( e^{- \la x, \im A x \ra/2 } ) \\
& = \{ (x, (\re A) x + \xi):  \, (x,\xi)  \in WF( e^{- \la x, \im A x \ra/2 } )  \} \\
& = \{ (x, (\re A) x ):  \, x \in \rr d \cap \Ker (\im A) \setminus 0 \},
\end{aligned}
\end{equation*}
which is the announced generalization of \eqref{example3}.

\subsection{The heat equation}

The heat equation 
\begin{equation}\label{heatequation}
\partial_t u(t,x) - \Delta_x u(t,x) = 0, \quad t \geq 0, \quad x \in \rr d, 
\end{equation}
is a particular case of the general equation \eqref{schrodeq} where $q(x,\xi) = |\xi|^2$. 
Thus $q$ is defined by the matrix
\begin{equation*}
Q = \left(
\begin{array}{cc}
0 & 0 \\
0 & I_d
\end{array}
\right) \in \cc {2d \times 2d},
\end{equation*}
and the Hamiltonian is
\begin{equation*}
F = \J Q = \left(
\begin{array}{cc}
0 & I_d \\
0 & 0
\end{array}
\right). 
\end{equation*}
Therefore $\Ker (\re F) = \rr d \times \{0\}$ so Corollary \ref{co1} yields
\begin{equation}\label{heatinclusion}
WF(e^{- t q^w(x,D)} u) 
\subseteq  WF(u) \cap ( \rr d \times \{ 0 \} ), \quad t > 0, \ u \in \cS'(\rr d). 
\end{equation}
We may conclude that the solution to the heat equation is immediately regularizing
\begin{equation*}
e^{-t q^w(x,D)} u \in \cS(\rr d), \quad t > 0, 
\end{equation*}
provided the Gabor wave front set of the initial data $u \in \cS'(\rr d)$ satisfies
\begin{equation*}
WF(u) \cap ( \rr d \times \{ 0 \} ) = \emptyset. 
\end{equation*}
Note that (cf. \eqref{example2})
\begin{equation*}
(\rr d \setminus \{ 0 \}) \times \{ 0 \} = WF( e^{i \la \cdot, \xi \ra} ), 
\end{equation*}
for any $\xi \in \rr d$, in particular $e^{i \la \cdot, \xi \ra}=1$ when $\xi = 0$. 
If the initial data is $u=1$, then $u(t,\cdot)=1 \notin \cS$ for any $t \geq 0$ so the equation is not regularizing. 
The inclusion \eqref{heatinclusion} says that the equation is immediately regularizing if $WF(u)$ is disjoint from $WF(1)$. 

\subsection{Another equation with real $q$}

Next we look at the equation
\begin{equation}\label{heatvariant2}
\partial_t u(t,x) + (|x|^2 - \Delta_x) u(t,x) = 0, \quad t \geq 0, \quad x \in \rr d, 
\end{equation}
for which $q(x,\xi) = | x |^2 + |\xi|^2$ 
is defined by the matrix
$Q = I_{2d}$, 
and the Hamiltonian is
$F = \J Q = \J$. 
Thus $\Ker (\re F) = \{ (0,0) \}$ so Corollary \ref{co1} yields
$WF(e^{-t q^w(x,D)} u) = \emptyset$ for $u \in \cS'(\rr d)$ and $t>0$.
Thus we have immediate regularization
\begin{equation}\label{propagation5}
e^{-t q^w(x,D)} u \in \cS(\rr d), \quad t>0, \ u \in \cS'(\rr d). 
\end{equation}

\subsection{The free Schr\"odinger equation}

The free Schr\"odinger equation 
\begin{equation*}
\partial_t u(t,x) - i \Delta_x u(t,x) = 0, \quad t \in \ro, \quad x \in \rr d, 
\end{equation*}
is a particular case of the general equation \eqref{schrodeq} where $q(x,\xi) = i|\xi|^2$. 
Thus $q$ is purely imaginary-valued and defined by the matrix
\begin{equation*}
Q = \left(
\begin{array}{cc}
0 & 0 \\
0 & iI_d
\end{array}
\right),
\end{equation*}
and the Hamiltonian is
\begin{equation*}
F = \J Q = \left(
\begin{array}{cc}
0 &  iI_d \\
0 & 0
\end{array}
\right). 
\end{equation*}
We have
\begin{equation*}
e^{-2 i t F} = I_{2d} - 2 i t F
= \left(
\begin{array}{cc}
I_d & 2 t I_d \\
0 & I_d
\end{array}
\right).
\end{equation*}
Since $q$ is purely imaginary-valued the propagation of the Gabor wave front set is given exactly by \eqref{realcase} 
\begin{equation*}
WF(e^{-t q^w(x,D)} u) = 
\left( 
\begin{array}{cc}
I_d & 2 t I_d \\
0 & I_d
\end{array}
\right) WF(u), \quad t \in \ro, \ u \in \cS'(\rr d). 
\end{equation*}

\subsection{The harmonic oscillator}

The harmonic oscillator Schr\"odinger equation 
\begin{equation*}
\partial_t u(t,x) + i(|x|^2- \Delta_x) u(t,x) = 0, \quad t \in \ro, \quad x \in \rr d, 
\end{equation*}
is a particular case of the general equation \eqref{schrodeq} where $q(x,\xi) = i (|x|^2 + |\xi|^2)$. 
Thus $q$ is defined by the matrix $Q =i  I_{2d}$
and the Hamiltonian is $F = \J Q = i \J$.
We have (cf. \cite[p.~188]{Folland1})
\begin{equation*}
e^{-2 i t F} 
= e^{2 t \J}
= \left(
\begin{array}{cc}
\cos(2t) I_d & \sin(2t) I_d \\
-\sin(2t) I_d & \cos(2t) I_d \\
\end{array}
\right).
\end{equation*}
Since $q$ is again purely imaginary-valued the propagation of the Gabor wave front set is periodic, given exactly by \eqref{realcase} for $u \in \cS'(\rr d)$
\begin{equation*}
WF(e^{- t q^w(x,D)} u) = 
\left(
\begin{array}{cc}
\cos(2t) I_d & \sin(2t) I_d \\
-\sin(2t) I_d & \cos(2t) I_d \\
\end{array}
\right) WF(u), \quad t \in \ro. 
\end{equation*}

\subsection{Equations with nonzero real and imaginary parts of $q$}

Next we study the equation
\begin{equation}\label{schrodvariant1}
\partial_t u(t,x) - (1+i)\Delta_x u(t,x) = 0, \quad t \geq 0, \quad x \in \rr d, 
\end{equation}
for which $q(x,\xi) = (1+i) |\xi|^2$ 
is defined by the matrix
\begin{equation*}
Q = \left(
\begin{array}{cc}
0 & 0 \\
0 & (1+i) I_d
\end{array}
\right),
\end{equation*}
and the Hamiltonian is
\begin{equation*}
F = \left(
\begin{array}{cc}
0 & (1+i) I_d \\
0 & 0
\end{array}
\right). 
\end{equation*}
Since $[\re F, \im F]=0$, which is equivalent to $\{q, \overline q \}=0$, we may apply Corollary \ref{co1}. 
We have $\Ker (\re F) = \rr d \times \{ 0 \}$ and
\begin{equation*}
e^{2 t \im F} = I_{2d} + 2 t \im F
= \left(
\begin{array}{cc}
I_d & 2 t I_d \\
0 & I_d
\end{array}
\right). 
\end{equation*}
Corollary \ref{co1} gives for $u \in \cS'(\rr d)$
\begin{equation*}
\begin{aligned}
WF(e^{-t q^w(x,D)} u) 
& \subseteq  
\left(
\begin{array}{cc}
I_d & 2 t I_d \\
0 & I_d
\end{array}
\right)
\left( WF(u)  \cap  (   \rr d \times \{ 0 \}) \right) \\
& =  
WF(u)  \cap  \left(   (\rr d \setminus \{ 0 \} ) \times \{ 0 \}  \right), \quad t > 0. 
\end{aligned}
\end{equation*}
The conclusion is thus the same as for the heat equation. 

Let us next study the equation
\begin{equation}\label{schrodvariant2}
\partial_t u(t,x) +(|x|^2 - i\Delta_x) u(t,x) = 0, \quad t \geq 0, \quad x \in \rr d, 
\end{equation}
for which $q(x,\xi) = |x|^2 + i |\xi|^2$ 
is defined by the matrix
\begin{equation*}
Q = \left(
\begin{array}{cc}
I_d & 0 \\
0 & i I_d
\end{array}
\right),
\end{equation*}
and the Hamiltonian is
\begin{equation*}
F = \left(
\begin{array}{cc}
0 & i I_d \\
-I_d & 0
\end{array}
\right). 
\end{equation*}
Since $[\re F, \im F] \neq 0$ we have $\{q, \overline q \} \neq 0$ so we cannot apply Corollary \ref{co1}. 
But Theorem \ref{th1} is applicable. 
We compute the singular space $S$. 
We have $\Ker (\re F) = \{ 0 \} \times \rr d$ and $\Ker (\re F \, \im F) = \rr d \times \{ 0 \}$,
which taken together imply $S=\{ (0,0) \}$. 
By Theorem \ref{th1},
\begin{equation*}
e^{-t q^w(x,D)} u \in \cS(\rr d), \quad t > 0, \ u \in \cS'(\rr d). 
\end{equation*}

\subsection{Non-selfadjoint equations with very degenerate diffusions}

We study first the equation
$$
\partial_tu+ x_1^2u+2\sum_{j=1}^{d-1}x_{j+1}\partial_{x_j}u-i \Delta_x u=0, \quad t \geq 0, \quad x \in \rr d, \quad d \geq 2,
$$
for which 
$$
q(x,\xi)=x_1^2+i\Big(\sum_{j=1}^d\xi_j^2+2\sum_{j=1}^{d-1}x_{j+1}\xi_j\Big).
$$
It was checked in \cite[p.~48--49]{kps21} that the Hamilton map of $q$ is given by
$$
(y_1,\dots,y_d,\eta_1,\dots,\eta_d)=F(x_1,\dots,x_d,\xi_1,\dots,\xi_d),
$$ 
with $y_j = i (\xi_j+x_{j+1})$ for $1 \leq j \leq d-1$, $y_{d} = i \xi_{d}$,
$\eta_1=-x_1$, $\eta_j=-i\xi_{j-1}$ for $2 \leq j \leq d$, and that the singular space is zero:
$$
S=\Big(\bigcap_{j=0}^{2d-1} \Ker \big[\re F(\im F)^j \big]\Big) \cap T^*\rr d =\{0\}.
$$
Starting with an initial datum given by any tempered distribution $u \in \cS'(\rr d)$, there is therefore a Schwartz smoothing for any positive time
$$
e^{-tq^w(x,D)}u \in \cS (\rr d), \quad t>0.
$$
Finally we consider the equation
$$
\partial_t u+ x_1^2u+2\sum_{j=1}^{d-2}x_{j+1}\partial_{x_j}u-i\Delta_x u+i x_d^2 u=0, \quad t \geq 0, \quad x \in \rr d, \quad d \geq 3,
$$
for which 
$$
q(x,\xi) = x_1^2+i\Big(\sum_{j=1}^d\xi_j^2+2\sum_{j=1}^{d-2}x_{j+1}\xi_j+x_d^2\Big).
$$
The Hamilton map of $q$ is 
$$
(y_1,\dots,y_d,\eta_1,\dots,\eta_d) = F(x_1,\dots,x_d,\xi_1,\dots,\xi_d),
$$ 
with $y_j=i(\xi_j+x_{j+1})$ for $1 \leq j \leq d-2$, $y_{d-1}=i\xi_{d-1}$, $y_d=i\xi_d$,
$\eta_1=-x_1$, $\eta_j=-i\xi_{j-1}$ for $2 \leq j \leq d-1$, $\eta_d=-ix_d$. The singular space is
$$
S=\Big(\bigcap_{j=0}^{2d-1} \Ker \big[\re F(\im F)^j \big]\Big) \cap T^*\rr d=(\{0\}_{d-1} \times \ro) \times (\{0\}_{d-1} \times \ro),
$$
with $\{0\}_{d-1}=(0,\dots,0) \in \rr {d-1}$. 
Theorem \ref{th1} shows that for all $u \in \cS'(\rr d)$, $t>0$,  
$$
WF(e^{-t q^w(x,D)}u) \subseteq \pi_1\left(\{0\}_{2d-2} \times  
\left(\begin{array}{cc}
  \cos(2t) & \sin(2t) \\
  -\sin(2t) & \cos(2t)
 \end{array}\right)
\pi_0(WF(u))\right),
$$
where the mappings $\pi_0 : \rr {2d} \rightarrow \rr  2$ and $\pi_1 : \rr {2d} \rightarrow \rr {2d}$ are defined by 
\begin{align*}
\pi_0(x_1,\dots,x_d,\xi_1,\dots,\xi_d) & = (x_d,\xi_d), \\ 
\pi_1(x_1,\xi_1,x_2,\xi_2,\dots,x_d,\xi_d) & = (x_1,\dots,x_d,\xi_1,\dots,\xi_d).
\end{align*}

%%%%%%%%%%%%%%%%%%%%%
\section{Examples and counterexamples}
\label{secexamples}
%%%%%%%%%%%%%%%%%%%%%

In this final section we collect some observations on the Gabor wave front set of some basic tempered distributions. 
As a byproduct we find an example that shows that the heat propagator microlocal inclusion \eqref{heatinclusion} may be strict for some initial data $u \in \cS'(\rr d)$. For simplicity we work with $d=1$. 

First we study the family of functions 
\begin{equation*}
u(x) = e^{i x^p}, \quad x \in \ro, 
\end{equation*}
for an exponent $p>0$. If $x<0$ then 
\begin{equation*}
u(x) = \exp(i (-1)^p |x|^p) = \exp(i \cos(\pi p) |x|^p - \sin(\pi p) |x|^p),
\end{equation*}
so $u \in \cS'(\ro)$ if and only if there exists $n \in \no_0 = \no \cup \{0\}$ such that $2n \leq p \leq 2n+1$. 
We need the following lemma which is proved by a straightforward induction argument. 

\begin{lem}\label{Tlemma}
If $\xi \in \ro$, $p>1$, $\fy \in C^\infty(\ro)$ and the operator $T$ is defined by
\begin{equation*}
T \fy (y) = \frac{\partial}{\partial y} \left( \frac{\fy(y)}{p y^{p-1} - \xi} \right), \quad y \in \ro, 
\end{equation*}
for $\xi \neq p y^{p-1}$, then for $m \in \no$ we have 
\begin{equation}\label{generalcase}
T^m \fy (y) = \sum_{n \geq m, \, \ell \geq 0, \, s > -m: \, (p-1)n - s \geq(p-1)m} C_{s,\ell,n,p} \frac{y^s \fy^{(\ell)}(y)}{(p y^{p-1} - \xi)^n},
\end{equation}
where the sum is finite and $C_{s,\ell,n,p}$ are real constants. 
If $2 \leq p \in \no$ then
\begin{equation}\label{integercase}
T^m \fy (y) = \sum_{n \geq m, \, \ell \geq 0, \, \no \ni s \geq 0: \, (p-1)n - s \geq(p-1)m} C_{s,\ell,n,p} \frac{y^s \fy^{(\ell)}(y)}{(p y^{p-1} - \xi)^n}.
\end{equation}
\end{lem}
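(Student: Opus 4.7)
The plan is to proceed by induction on $m \in \no$. For the base case $m=1$, I directly compute
\[
T\fy(y) = \frac{\fy'(y)}{py^{p-1}-\xi} - \frac{p(p-1)y^{p-2}\fy(y)}{(py^{p-1}-\xi)^2},
\]
and verify the four constraints for each of the two summands. For the first term $(s,\ell,n)=(0,1,1)$, while for the second $(s,\ell,n)=(p-2,0,2)$; in each case $n\geq 1$, $s>-1$, and $(p-1)n-s\geq p-1$, and moreover $s\geq 0$ whenever $p\geq 2$ is an integer. This establishes both \eqref{generalcase} and \eqref{integercase} at $m=1$.

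For the inductive step, assume \eqref{generalcase} holds for some $m\geq 1$. It suffices to apply $T$ to a single summand $\psi_{s,\ell,n}(y)=y^s\fy^{(\ell)}(y)/(py^{p-1}-\xi)^n$ and check that every resulting term lies in the index set for $m+1$. Differentiating
\[
T\psi_{s,\ell,n}(y) = \frac{\partial}{\partial y}\left(\frac{y^s \fy^{(\ell)}(y)}{(py^{p-1}-\xi)^{n+1}}\right)
\]
produces three contributions, with new indices
\[
(s-1,\ell,n+1),\qquad (s,\ell+1,n+1),\qquad (p-2+s,\ell,n+2),
\]
respectively. In each of the three cases the inequalities $n'\geq m+1$ and $\ell'\geq 0$ are immediate, and a one-line computation shows $(p-1)n'-s'\geq (p-1)(m+1)$ using the inductive bound $(p-1)n-s\geq (p-1)m$ and the fact that the defect $p$ or $p-1$ is added to the left-hand side. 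For the constraint on $s'$: in case (a) $s'=s-1>-m-1$; in case (b) $s'=s>-m>-(m+1)$; in case (c) $s'=p-2+s>p-2-m>-(m+1)$ since $p>1$. This proves \eqref{generalcase}.

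For the refinement \eqref{integercase} in the case $2\leq p\in\no$, the only point that requires care is case (a), where $s'=s-1$ could \emph{a priori} be negative. The key observation is that the coefficient carries a factor $s$ coming from $\partial_y(y^s)=sy^{s-1}$, so when $s=0$ the term vanishes identically and can be dropped; for integer $s\geq 1$ one has $s'=s-1\geq 0$. In cases (b) and (c) one has $s'=s\geq 0$ and $s'=p-2+s\geq 0$ directly, using $p\geq 2$. Hence all summands again have $s'\in\no$, completing the induction.

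The main obstacle is purely bookkeeping: there are no analytic subtleties, only verifying that the linear inequalities on $(s,\ell,n)$ are preserved by each of the three derivative operations. The delicate point is the integer case, where the argument relies on the algebraic vanishing of the $sy^{s-1}$ term at $s=0$ to avoid producing a negative power of $y$.
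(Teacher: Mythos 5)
Your proof is correct and fills in exactly the argument the paper omits: the paper states only that Lemma~\ref{Tlemma} ``is proved by a straightforward induction argument'' without giving details, and your induction is the natural one. All the bookkeeping checks out -- the three child indices $(s-1,\ell,n+1)$, $(s,\ell+1,n+1)$, $(p-2+s,\ell,n+2)$ each increase $(p-1)n-s$ by $p$, $p-1$, $p$ respectively (so at least $p-1$, since $p>1$), preserve $n\geq m$, and preserve $s>-m$, and your observation that the factor $s$ from $\partial_y(y^s)$ kills the $s=0$ term is precisely what makes the integer-$p$ refinement $s\geq 0$ go through.
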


\begin{prop}\label{frequencyaxis}
If $u(x) = e^{i x^p}$, $2n \leq p \leq 2n+1$ for some $n \in \no$, and $p>2$ then 
\begin{equation*}
WF(u) \subseteq \{ 0 \} \times (\ro \setminus 0).
\end{equation*}
\end{prop}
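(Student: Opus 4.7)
The plan is to use the STFT microlocal characterization of $WF$ recalled after Definition \ref{wavefront1}, with a window $\fy \in C_c^\infty(\ro)$ supported in $[-1,1]$. Fix $(x_0, \xi_0)$ with $x_0 \neq 0$. Any sufficiently small open conic neighborhood $\Gamma$ of $(x_0, \xi_0)$ satisfies $|x| \geq \delta r$ with $x$ of constant sign (that of $x_0$) for some $\delta \in (0,1)$ and all $(x,\xi) \in \Gamma$, where $r = |(x,\xi)|$. For such $(x,\xi)$ with $r$ large, the integrand of
\begin{equation*}
V_\fy u(x,\xi) = \int_\ro e^{iy^p} \overline{\fy(y-x)} e^{-iy\xi} \, dy
\end{equation*}
is supported in $[x-1,x+1]$, on which $|y| \geq \delta r/2$ and $y$ has the sign of $x_0$. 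The decisive bound, forced by $p > 2$, is
\begin{equation*}
|py^{p-1} - \xi| \geq p(\delta r/2)^{p-1} - r \gtrsim r^{p-1}
\end{equation*}
uniformly on this support for $r$ large.

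When the phase $\Phi(y) = y^p - y\xi$ is real on the support---namely when $x_0 > 0$, or when $x_0 < 0$ and $p \in \{2n, 2n+1\}$ is an integer so that $u(y) = e^{iy^p}$ with the ordinary integer power---I apply Lemma \ref{Tlemma} with an arbitrary $m \in \no$. Iterated integration by parts rewrites $V_\fy u(x,\xi)$ up to a constant as $\int e^{i\Phi(y)} T^m[\overline{\fy(\cdot - x)}](y) \, dy$, and each summand in \eqref{generalcase} or \eqref{integercase} is bounded on the support by $|y|^s / |py^{p-1}-\xi|^n \lesssim r^{s - (p-1)n} \leq r^{-(p-1)m}$, using the constraint $(p-1)n - s \geq (p-1)m$. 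Integration over a bounded set yields $|V_\fy u(x,\xi)| \lesssim_m r^{-(p-1)m}$, and letting $m \to \infty$ gives decay faster than any polynomial on $\Gamma$.

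The remaining case $x_0 < 0$ with $p \notin \no$ (so $2n < p < 2n+1$ and $\sin(\pi p) > 0$) is in fact easier: the interpretation $u(y) = \exp(ie^{i\pi p}|y|^p)$ gives $|u(y)| \leq e^{-\sin(\pi p)(\delta r/2)^p}$ pointwise on the support, hence
\begin{equation*}
|V_\fy u(x,\xi)| \leq 2\|\fy\|_\infty e^{-c r^p},
\end{equation*}
which again beats any polynomial. The main technical point is to split cleanly between integer and non-integer $p$ on the negative real axis, since $y^p$ is defined there only by the chosen complex extension; once split, the non-integer subcase is disposed of by built-in exponential decay, and every oscillatory subcase reduces to a single non-stationary-phase estimate powered by $|\Phi'| \gtrsim r^{p-1}$, which is exactly what the hypothesis $p > 2$ secures (and which would fail for $p = 2$, consistent with the chirp example \eqref{example3}).
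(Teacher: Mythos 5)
Your proposal is correct and follows essentially the same route as the paper: conic localization around a point with $x_0\neq 0$, the lower bound $|p\,y^{p-1}-\xi|\gtrsim |(x,\xi)|^{p-1}$ on the support of $\fy(\cdot-x)$ (exactly where $p>2$ enters), and iterated integration by parts via Lemma \ref{Tlemma} yielding superpolynomial decay of $V_\fy u$ on the cone. The only deviation is that you split off the case $x_0<0$, $p\notin\no$ and dispatch it through the pointwise bound $|u(y)|=e^{-\sin(\pi p)|y|^p}$, whereas the paper runs the same integration-by-parts estimate uniformly in the sign of $x$ (implicitly using the branch $y^p=e^{i\pi p}|y|^p$, for which $|e^{iy^p}|\le 1$ and the bound on $|p\,y^{p-1}-\xi|$ persist); this is a harmless, indeed clarifying, variation rather than a different method.
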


\begin{proof}
If $0 \neq z \notin \{ 0 \} \times (\ro \setminus 0)$ then there exists $C>0$ such that $z \in \Gamma = \{(x,\xi): \, |x| > C |\xi| \} \subseteq T^* \ro$ which is an open conic set. 
If $|y-x| \leq 1$ and $(x,\xi) \in \Gamma$ we have
\begin{equation}\label{estbelow1}
\begin{aligned}
|p y^{p-1} - \xi| & \geq p |y|^{p-1} - |\xi|
\geq p (|x|-1)^{p-1} - |\xi| \\
& = p |x|^{p-1} \left( (1-|x|^{-1})^{p-1} - \frac{|\xi|}{p |x|^{p-1}} \right) \\
& > p |x|^{p-1} \left( (1-|x|^{-1})^{p-1} - \frac{1}{p C |x|^{p-2}} \right) \\
& \geq \frac{p}{2} |x|^{p-1},
\end{aligned}
\end{equation}
provided $|x| \geq A$ for $A \geq 2$ sufficiently large. 
If $|x| < A$ and $(x,\xi) \in \Gamma$ then $|(x,\xi)|$, and therefore $ \eabs{(x,\xi)}^N |V_\fy u(x,\xi)|$, is uniformly bounded.
Hence it suffices to consider $|x| \geq A$ and $(x,\xi) \in \Gamma$. 
Let $\fy \in C_c^\infty(\ro)$ be real-valued with $\supp \fy \subseteq [-1,1]$ and let $N \geq 0$. 
Since $A \geq 2$ we have $|y| \geq 1$ when $|y-x| \leq 1$. 
Due to
\begin{equation*}
\frac{\partial}{\partial y} e^{i (y^p - y \xi)} = i(p y^{p-1} - \xi) e^{i (y^p - y \xi)}, 
\end{equation*}
we obtain by integration by parts for any $m \in \no$ using  \eqref{generalcase} in Lemma \ref{Tlemma}
if $(x,\xi) \in \Gamma$
\begin{equation}\label{stftest1}
\begin{aligned}
& \eabs{(x,\xi)}^N |V_\fy u(x,\xi)| \\
& \lesssim \eabs{x}^N \left| \int_\ro e^{i (y^p - y \xi)} \fy(y-x) dy \right| \\
& = \eabs{x}^N \left| \int_{|y-x| \leq 1} e^{i (y^p - y \xi)} T^m \fy(y-x) dy \right| \\
& \lesssim \eabs{x}^N \sum_{n \geq m, \, \ell \geq 0, \, s > -m: \, (p-1)n -s \geq(p-1)m} \int_{|y-x| \leq 1}  
\frac{|y|^s | \fy^{(\ell)}(y-x)|}{|p y^{p-1} - \xi|^n} dy. 
\end{aligned}
\end{equation}
For $s<0$ we can estimate $|y|^s \leq 1$ since $|y| \geq 1$ when $|y-x| \leq 1$. 
In this case we may estimate using \eqref{estbelow1}, $|x| \geq A$ and $n \geq m$,
\begin{equation}\label{sneg}
\frac{|y|^s}{|p y^{p-1} - \xi|^n} \lesssim \eabs{x}^{-n(p-1)} \leq \eabs{x}^{-m(p-1)}. 
\end{equation} 
If instead $s \geq 0$ we estimate $|y|^s \leq \eabs{y}^s \lesssim \eabs{x}^s \eabs{y-x}^s$
and again using \eqref{estbelow1}, $|x| \geq A$ and $(p-1)n -s \geq(p-1)m$,
\begin{equation}\label{spos}
\frac{|y|^s}{|p y^{p-1} - \xi|^n} \lesssim \eabs{x}^{s-n(p-1)} \eabs{y-x}^s \leq \eabs{x}^{-m(p-1)} \eabs{y-x}^s. 
\end{equation} 
Inserting \eqref{sneg} and \eqref{spos} into \eqref{stftest1} gives if $(x,\xi) \in \Gamma$ and $|x| \geq A$,
\begin{equation*}
\eabs{(x,\xi)}^N |V_\fy u(x,\xi)| 
\lesssim \eabs{x}^{N-(p-1)m} \leq 1,
\end{equation*}
provided $m$ is sufficiently large. 
Thus $z \notin WF(u)$. 
\end{proof}

If $u(-x) = \pm u(x)$ then $|V_\fy u(x,\xi)| = |V_{\check \fy} u(-x,-\xi)|$ where $\check \fy(x) = \fy(-x)$, so $\check u = \pm u$ implies
\begin{equation}\label{mirror}
WF(u) = - WF(u). 
\end{equation}
Since $u(x) = e^{i x^p} \notin \cS (\ro)$ this gives

\begin{cor}
If $u(x) = e^{i x^{2k}}$ with $k \in \no$ and $k \geq 2$ then 
\begin{equation*}
WF(u) = \{ 0 \} \times (\ro \setminus 0).
\end{equation*}
\end{cor}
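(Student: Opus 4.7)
The proof is a quick combination of three ingredients already at hand, so the plan is really just to assemble them.

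First, observe that $p=2k$ satisfies $2 n\le p\le 2 n+1$ with $n=k$, and $p\geq 4>2$, so Proposition~\ref{frequencyaxis} applies and gives the inclusion
\begin{equation*}
WF(u)\subseteq \{0\}\times (\ro\setminus 0).
\end{equation*}
This is the ``easy half'' coming from the integration-by-parts argument in Lemma~\ref{Tlemma}.

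Second, I would note that $u$ is not Schwartz: indeed $|u(x)|=1$ for all $x\in\ro$, so $u\in\cS'(\ro)\setminus\cS(\ro)$. By property (1) of the Gabor wave front set (Proposition~2.4 of \cite{Hormander1}) we therefore have $WF(u)\neq\emptyset$. Hence there exists some $\xi_0\neq 0$ with $(0,\xi_0)\in WF(u)$.

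Third, the conicity of the Gabor wave front set yields $(0,t\xi_0)\in WF(u)$ for every $t>0$. Since $2k$ is even, $u(-x)=u(x)$, so by the mirror symmetry observation \eqref{mirror} we also get $(0,-t\xi_0)\in WF(u)$ for every $t>0$. Combined with the upper inclusion from Proposition~\ref{frequencyaxis}, this produces exactly
\begin{equation*}
WF(u)=\{0\}\times (\ro\setminus 0),
\end{equation*}
which is the desired equality. There is essentially no obstacle here; the only point worth checking carefully is that the three ingredients (Proposition~\ref{frequencyaxis}, the nonemptiness criterion, and \eqref{mirror}) are each available for this specific $u$, which is immediate from $p=2k\ge 4$ and the evenness of the exponent.
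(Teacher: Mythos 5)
Your proof is correct and follows essentially the same route as the paper: the upper inclusion from Proposition~\ref{frequencyaxis}, non-emptiness of $WF(u)$ because $|u|\equiv 1$ so $u\notin\cS(\ro)$, and the evenness symmetry \eqref{mirror} together with conicity to fill out both halves of the frequency axis. The only difference is that you make the conicity step explicit, which the paper leaves implicit.
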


In the case when $p \geq 3$ is an odd integer the Gabor wave front set is described by the following result where we denote $\ro_+ = \{ x \in \ro: \, x>0 \}$. 

\begin{prop}\label{WFairy}
If $u(x) = e^{i x^{2k+1}}$ with $k \in \no$ and $k \geq 1$ then 
\begin{equation*}
WF(u) = \{ 0 \} \times \ro_+.
\end{equation*}
\end{prop}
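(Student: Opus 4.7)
The plan is to combine three ingredients: Proposition \ref{frequencyaxis}, an integration-by-parts argument to exclude negative frequencies, and stationary phase to force positive frequencies into the wave front set. Set $p = 2k+1$, fix a real window $\fy \in C_c^\infty(\ro)$ with $\supp \fy \subseteq [-1,1]$ and $\fy(0) \neq 0$, and compute
\[
V_\fy u(x,\xi) = \int_{\ro} e^{i(y^{p} - y\xi)} \fy(y-x)\,dy.
\]
Since $p > 2$ and $2k \leq p \leq 2k+1$, Proposition \ref{frequencyaxis} gives $WF(u) \subseteq \{0\} \times (\ro \setminus 0)$, so it remains to show $(0,\xi_0) \notin WF(u)$ for every $\xi_0 < 0$ and $(0,\xi_0) \in WF(u)$ for every $\xi_0 > 0$.

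For the first assertion, I would fix $\xi_0 < 0$ and the open conic neighborhood $\Gamma = \{(x,\xi) : \xi < 0,\ |x| < \epsilon|\xi|\}$ of $(0,\xi_0)$. The key observation is that because $p - 1 = 2k$ is even, the phase derivative $py^{p-1} - \xi = py^{p-1} + |\xi|$ never vanishes and satisfies $|py^{p-1} - \xi| \geq \max(|\xi|, p|y|^{p-1})$ for every $y \in \ro$. Applying \eqref{integercase} of Lemma \ref{Tlemma} with $g(y) = \fy(y-x)$ and integrating by parts $m$ times expresses $V_\fy u(x,\xi)$ as a finite linear combination of integrals of $y^s \fy^{(\ell)}(y-x)/(py^{p-1}-\xi)^n$ with $s \geq 0$, $n \geq m$ and $(p-1)n - s \geq (p-1)m$. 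Splitting into the regions $p|y|^{p-1} \geq |\xi|$ and $p|y|^{p-1} < |\xi|$ yields in both cases the bound
\[
\frac{|y|^s}{|py^{p-1}-\xi|^n} \leq C\,|\xi|^{s/(p-1) - n} \leq C\,|\xi|^{-m},
\]
requiring no lower bound on $|y|$ thanks to $s \geq 0$ (this is why we must use the integer case of Lemma \ref{Tlemma}). Integrating against the $L^1$ factors $|\fy^{(\ell)}(\cdot - x)|$ gives $|V_\fy u(x,\xi)| \leq C_m |\xi|^{-m}$ on $\Gamma$ for every $m$, and since $|\xi| \gtrsim |(x,\xi)|$ on $\Gamma$ this is superpolynomial decay.

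For the second assertion, I would fix $\xi_0 > 0$ and choose $\xi_n \to +\infty$ together with $x_n = (\xi_n/p)^{1/(p-1)}$. Then $\xi_n > 0$ and $x_n/\xi_n = p^{-1/(p-1)}\,\xi_n^{1/(p-1)-1} \to 0$, so $(x_n,\xi_n)$ lies in every open conic neighborhood of $(0,\xi_0)$ for $n$ large. Writing $y = x_n + z$ gives
\[
V_\fy u(x_n,\xi_n) = e^{-ix_n\xi_n}\int_{\ro} e^{i\phi_n(z)}\fy(z)\,dz, \qquad \phi_n(z) = (x_n+z)^p - z\xi_n,
\]
and the choice of $x_n$ is precisely such that $\phi_n'(0) = px_n^{p-1} - \xi_n = 0$, while $\phi_n''(0) = p(p-1)x_n^{p-2} \to +\infty$. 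The other zero of $\phi_n'$ sits at $z = -2x_n$, outside $\supp \fy$ for large $n$, so $z = 0$ is the unique stationary point in the support. Stationary phase then gives
\[
|V_\fy u(x_n,\xi_n)| = |\fy(0)|\sqrt{2\pi/\phi_n''(0)} + O(\phi_n''(0)^{-3/2}) \sim C\,\xi_n^{-(2k-1)/(4k)}
\]
with $C > 0$. Since $(2k-1)/(4k) < 1$, $|(x_n,\xi_n)|^N |V_\fy u(x_n,\xi_n)| \to \infty$ for every $N \geq 1$, so $(0,\xi_0) \in WF(u)$.

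The main obstacle is making the stationary phase estimate rigorous, because the phase $\phi_n$ varies with $n$ rather than being of the semiclassical form $\lambda \phi(z)$. A clean way is to rescale $w = z\sqrt{\phi_n''(0)}$: on $|z| \leq 1$ one has by Taylor expansion $\phi_n(z) - \phi_n(0) = w^2/2 + R_n(w)$ with $R_n(w) = O(w^3 x_n^{-p/2})$ (using $|\phi_n'''(z)| \leq C x_n^{p-3}$ uniformly there), and since $x_n^{-p/2} \to 0$ the integral reduces to a Gaussian with vanishing perturbation, which delivers the stated leading asymptotic and in particular a positive lower bound of order $\xi_n^{-(2k-1)/(4k)}$. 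The exclusion argument in the first step is then a direct variant of the integration-by-parts scheme already used in the proof of Proposition \ref{frequencyaxis}, adapted to the regime where $|\xi|$ rather than $|x|$ dominates the denominator.
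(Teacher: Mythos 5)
Your proof is correct, but it takes a genuinely different route from the paper in the half where the positive ray is shown to lie in $WF(u)$. For the exclusion of negative frequencies you use essentially the paper's machinery: the bound $|py^{p-1}-\xi|\geq\max\big(p|y|^{p-1},|\xi|\big)$, valid for $\xi<0$ because $p-1=2k$ is even, combined with the integer case \eqref{integercase} of Lemma \ref{Tlemma}; the paper organizes the bookkeeping slightly differently (it first integrates by parts $N$ times against $e^{-iy\xi}$ to absorb the factor $\xi^N$ and only then applies $T^m$), but the estimates are the same in substance, and your case analysis giving $|y|^s|py^{p-1}-\xi|^{-n}\lesssim|\xi|^{s/(p-1)-n}\leq|\xi|^{-m}$ is sound. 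The real divergence is in the positive direction: the paper proves no lower bound at all, arguing instead that $u\notin\cS(\ro)$ forces $WF(u)\neq\emptyset$, Proposition \ref{frequencyaxis} confines $WF(u)$ to $\{0\}\times(\ro\setminus 0)$, and once the single point $(0,-1)$ is excluded, conicity leaves only the ray $\{0\}\times\ro_+$, which a nonempty conic set must fill. You instead prove membership of each $(0,\xi_0)$, $\xi_0>0$, directly via stationary phase along $(x_n,\xi_n)$ with $x_n=(\xi_n/p)^{1/(p-1)}$, obtaining $|V_\fy u(x_n,\xi_n)|\gtrsim\xi_n^{-(2k-1)/(4k)}$. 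This is more work, and the rescaling argument as written needs one more touch: after the substitution $w=z\sqrt{\phi_n''(0)}$ the amplitude $\fy\big(w/\sqrt{\phi_n''(0)}\big)$ spreads out, so one cannot simply pass to the limit against the non-integrable factor $e^{iw^2/2}$; it is cleaner to write $\phi_n=\lambda_n\psi_n$ with $\lambda_n=\phi_n''(0)$, note that on $[-1,1]$ the normalized phases $\psi_n$ have derivative bounds and a lower bound on $\psi_n''$ uniform in $n$, and invoke the standard stationary phase lemma with uniform constants. In exchange, your route yields something the paper's does not: a self-contained, quantitative proof that the positive ray carries Gabor singularities, with an explicit STFT decay rate along it, independent of the soft input $u\notin\cS(\ro)$.
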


\begin{proof}
Since $u \notin \cS$ we must have $WF(u) \neq \emptyset$, and hence 
by Proposition \ref{frequencyaxis} it suffices to show $(0,-1) \notin WF(u)$. 
The open conic set $\Gamma = \{(x,\xi): \, \xi < - |x| \} \subseteq T^* \ro$ contains  $(0,-1)$. 
Let $\fy \in C_c^\infty(\ro)$ be real-valued with $\supp \fy \subseteq [-1,1]$ and let $N \geq 0$. Since $\eabs{(x,\xi)} \lesssim \eabs{\xi}$ when $(x,\xi) \in \Gamma$ it suffices to show 
\begin{equation}\label{reducedestimate}
\sup_{(x,\xi) \in \Gamma, \, |\xi| \geq 1} | \xi^N V_\fy u(x,\xi)| < +\infty. 
\end{equation}
If $(x,\xi) \in \Gamma$ then 
\begin{equation}\label{estbelow2}
|(2k+1) y^{2k} - \xi| = (2k+1) y^{2k} + |\xi| \geq \max( (2k+1) y^{2k}, | \xi |). 
\end{equation}
Integration by parts gives
\begin{equation*}
\begin{aligned}
| \xi^N V_\fy u(x,\xi)| 
& = \left| \int_\ro (i \partial_y)^N \left( e^{- iy \xi} \right) e^{i y^{2k+1}} \fy(y-x) dy \right| \\
& = \left| \sum_{n \leq N} \int_{|y-x| \leq 1} e^{i (y^{2k+1} - y \xi)} p_n (y)  \fy^{(n)} (y-x) dy \right|, \\
\end{aligned}
\end{equation*}
where $p_n$ are polynomials of degree $\leq 2 k N$. 
From \eqref{integercase} in Lemma \ref{Tlemma} with $p=2k+1$ we have for any $m \in \no$,
\begin{align}\label{integralestimate1}
& \ \left| \int_{|y-x| \leq 1} e^{i (y^{2k+1} - y \xi)} p_n (y) \fy^{(n)} (y-x) dy \right| \\ \notag
 = & \ \left| \int_{|y-x| \leq 1} e^{i (y^{2k+1} - y \xi)} T^m \left( p_n (y)  \fy^{(n)} (y-x) \right) dy \right| \\  \notag
 \lesssim & \ \sum_{\substack{n \geq m, \, \ell \geq 0, \, \no \ni s \geq 0 \\ 2k n - s \geq 2 k m}} \int_{|y-x| \leq 1} \frac{\eabs{y}^{s} }{|(2k+1) y^{2k} - \xi |^n} \left| \partial_y^\ell \left( p_n (y) \fy^{(n)}(y-x) \right) \right| dy.
\end{align}
Using \eqref{estbelow2} and the assumption $|\xi| \geq 1$ we estimate the integrand when $|y| < 1$ as
\begin{equation}\label{sneg2}
\frac{\eabs{y}^s}{|(2k+1) y^{2k} - \xi|^n} \left| \partial_y^\ell \left( p_n (y) \fy^{(n)}(y-x) \right) \right|
\lesssim  |\xi|^{-n} \leq 1. 
\end{equation} 
When $|y| \geq 1$ we estimate the integrand as, again using \eqref{estbelow2}, plus $\deg p_n \leq 2 k N$ and $2k n - s \geq 2 k m$, 
\begin{multline}\label{spos2}
\frac{\eabs{y}^s}{|(2k+1) y^{2k} - \xi|^n} \left| \partial_y^\ell \left( p_n (y) \fy^{(n)}(y-x) \right) \right|\\
\lesssim \eabs{y}^{s-2kn+2kN}  \leq \eabs{y}^{2k(N-m)} \leq 1, 
\end{multline} 
if $m \geq N$. 
Inserting \eqref{sneg2} and \eqref{spos2} into \eqref{integralestimate1} shows that the latter integral is bounded when $(x,\xi) \in \Gamma$ and $|\xi| \geq 1$. 
This shows that \eqref{reducedestimate} is valid. 
\end{proof}

\begin{example}
Consider the heat propagator inclusion \eqref{heatinclusion} for $d=1$: 
\begin{equation}\label{heatinclusion1}
WF(e^{-t q^w(x,D)} u) 
\subseteq  WF(u) \cap ( \ro \times \{ 0 \} ), \quad t > 0, \ u \in \cS'(\ro). 
\end{equation}
Let 
\begin{equation}\label{airy}
u = \cF^{-1} (e^{i x^3}),
\end{equation}
which is (up to normalization) the Airy function (cf. \cite[Chapter 7.6]{Hormander0}). 
The heat equation \eqref{heatequation} for $d=1$ has the solution operator
$$e^{- t q^w(x,D)} u(x) = (2 \pi)^{-1} \int_{\ro} e^{i x \xi-t |\xi|^2} \widehat u (\xi) \, d \xi = \cF^{-1} ( e^{-t |\cdot|^2} \widehat u )(x), \quad x \in \ro.$$
If $t>0$ then
\begin{equation*}
e^{-t |\xi|^2} \widehat u(\xi) = e^{-t |\xi|^2 + i \xi^3} \in \cS(\ro),
\end{equation*}
so we have $WF(e^{-t q^w(x,D)} u) = \emptyset$ for $t>0$, that is, the equation is immediately regularizing for the initial datum \eqref{airy}. 

From Proposition \ref{WFairy} and $\J WF(u) = WF (\widehat u)$ (cf \eqref{symplecticinvarianceWF}) we obtain
\begin{equation*}
WF(u) = -\J WF (\widehat u) = \ro_{-} \times \{ 0 \},
\end{equation*}
where $\ro_- = \{x \in \ro: \, x<0 \}$. 
Thus 
\begin{equation*}
WF(u) \cap ( \ro \times \{ 0 \} ) = \ro_{-} \times \{ 0 \}, 
\end{equation*}
and hence this example shows that the inclusion \eqref{heatinclusion1} may be strict. 
\end{example}

The Gabor wave front set of the derivative of order $k \in \no_0$ of the Dirac delta on $\ro$ satisfies by microlocality 
\begin{equation*}
WF(\partial^k \delta_0) \subseteq WF(\delta_0) = \{0\} \times \ro \setminus 0. 
\end{equation*}
$WF(\partial^k \delta_0)$ satisfies \eqref{mirror} since $u = \partial^k \delta_0$ satisfies $\check u = (-1)^k u$.
Since $WF(\partial^k \delta_0)$ must be non-empty we conclude
\begin{equation*}
WF(\partial^k \delta_0) =  \{0\} \times \ro \setminus 0, \quad k \in \no_0. 
\end{equation*}
Combined with $WF (\widehat u) = \J WF(u)$ this gives the Gabor wave front set of a monomial
\begin{equation*}
WF(x^k) = \ro \setminus 0 \times  \{0\}, \quad k \in \no_0. 
\end{equation*}
Finally we determine the Gabor wave front set of the Heaviside function $H$.  

\begin{prop}
\begin{equation*}
WF(H) = \{ 0 \} \times (\ro \setminus 0) \cup \ro_+ \times \{ 0 \}. 
\end{equation*}
\end{prop}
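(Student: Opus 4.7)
The plan is to establish the two inclusions separately. The upper bound on $WF(H)$ will come from short-time Fourier transform (STFT) estimates with a Gaussian window, and the lower bound splits into a microlocal argument for the frequency axis together with a direct ray computation for the positive real axis.

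For the upper bound $WF(H) \subseteq \{0\} \times (\ro \setminus 0) \cup \ro_+ \times \{0\}$, I would fix the window $\varphi(y) = e^{-y^2}$, so
\[
V_\varphi H(x,\xi) = \int_0^\infty e^{-(y-x)^2} e^{-iy\xi} \, dy.
\]
Two conic regions must be treated. If $x_0 < 0$, I pick a conic neighborhood of $(x_0,\xi_0)$ of the form $\{(x,\xi) : x \leq -c|(x,\xi)|\}$; on it $y \geq 0$ and $x \leq 0$ force $(y-x)^2 \geq y^2 + x^2$, which gives the pointwise bound $|V_\varphi H(x,\xi)| \leq \tfrac{\sqrt\pi}{2} e^{-x^2}$, and hence rapid decay in $|(x,\xi)|$. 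If $x_0 > 0$ and $\xi_0 \neq 0$, I pick a conic neighborhood on which $x$ and $|\xi|$ are each comparable to $|(x,\xi)|$; iterating integration by parts in $y$ against $e^{-iy\xi}$ yields
\[
V_\varphi H(x,\xi) = \sum_{k=0}^{N-1} \frac{\partial_y^k e^{-(y-x)^2}\big|_{y=0}}{(i\xi)^{k+1}} + \frac{1}{(i\xi)^N} \int_0^\infty \partial_y^N e^{-(y-x)^2}\, e^{-iy\xi}\, dy.
\]
The boundary terms are bounded by polynomials in $x$ times $e^{-x^2}/|\xi|^{k+1}$, and the remainder is controlled by $|\xi|^{-N}$ times the uniform Hermite-weighted Gaussian integral $\int_\ro |H_N(z)| e^{-z^2} dz$. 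On the chosen cone, $e^{-x^2}$ swallows the polynomial factors and $|\xi|^{-N}$ supplies decay of order $N$ in $|(x,\xi)|$, so rapid decay of every order holds.

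For the frequency-axis part of the lower bound I would use that the symbol $(x,\xi) \mapsto \xi$ lies in $G^1$, its Weyl quantization is $D_x$, and $D_x H = -i\delta_0$; property (2) of the Gabor wave front set recalled in Section~\ref{prelim}, together with \eqref{example1}, then gives
\[
\{0\} \times (\ro \setminus 0) = WF(\delta_0) = WF(D_x H) \subseteq WF(H).
\]
For the positive-axis part I would compute directly, for $x_0 > 0$ and $t > 0$,
\[
V_\varphi H(tx_0,0) = \int_{-tx_0}^\infty e^{-z^2}\, dz \longrightarrow \sqrt\pi \quad \text{as } t \to +\infty.
\]
Since every open conic neighborhood of $(x_0,0)$ contains the ray $\{(tx_0,0) : t \geq 1\}$, on which $\eabs{(tx_0,0)}^N |V_\varphi H(tx_0,0)|$ is unbounded for $N \geq 1$, rapid decay must fail and $(x_0,0) \in WF(H)$.

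The main obstacle will be the bookkeeping in the integration-by-parts estimate of the second upper-bound case: one has to verify uniformly in the cone that the Gaussian boundary factor $e^{-x^2}$ dominates the Hermite-type polynomial in $x$, and that the $y$-integral of $|\partial_y^N e^{-(y-x)^2}|$ is bounded independently of $x$.
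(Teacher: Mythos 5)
Your proposal is correct, and its skeleton coincides with the paper's: the frequency axis comes from microlocality applied to $H'=-i\,D_xH=\delta_0$ together with \eqref{example1}, the inclusion $\ro_+\times\{0\}\subseteq WF(H)$ from the non-decaying ray computation $V_\varphi H(tx_0,0)\to\widehat\varphi(0)\neq0$, and the upper bound from superpolynomial STFT decay on open cones covering the complement. The difference lies in how the upper bound is implemented. The paper takes $\varphi\in C_c^\infty(\ro)$ with $\supp\varphi\subseteq[-1,1]$: then for $|x|>1$ the integration by parts $\xi^NV_\varphi H(x,\xi)=\pm\int_0^\infty e^{-iy\xi}\,\partial^N\varphi(y-x)\,dy$ has no boundary terms, the single hyperbolic cone $C_1|x|<|\xi|<C_2|x|$ disposes of all points with $x\neq0$ and $\xi\neq0$ at once, and on the cone $x<-|\xi|$ the STFT vanishes identically for $x<-1$, so the negative real axis is excluded for free. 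You instead fix the Gaussian window and split the complement into $\{x_0<0\}$, where the elementary bound $|V_\varphi H(x,\xi)|\leq\tfrac{\sqrt{\pi}}{2}e^{-x^2}$ gives exponential decay on a cone where $x\leq-c|(x,\xi)|$, and $\{x_0>0,\ \xi_0\neq0\}$, where the integration by parts produces boundary terms at $y=0$ that must be absorbed by $e^{-x^2}$ (possible since $x\geq c|(x,\xi)|$ on the chosen cone) plus a remainder of size $O(|\xi|^{-N})$; the required uniformity you flag does hold, since the boundary terms are Hermite polynomials in $x$ times $e^{-x^2}$ and $\int_\ro|\partial_z^Ne^{-z^2}|\,dz$ is a constant depending only on $N$. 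Both routes work: the compactly supported window buys boundary-free integrations by parts and an identically vanishing STFT on the left half-line, while your Gaussian buys explicit exponential bounds and treats the whole half-plane $x<0$ in one stroke (your expansion's boundary terms should carry alternating signs, but this does not affect any estimate).
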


\begin{proof}
Since $\delta_0 = H'$, we have by microlocality
\begin{equation*}
\{ 0 \} \times (\ro \setminus 0) = WF(\delta_0) \subseteq WF(H). 
\end{equation*}
If $x \neq 0$ and $\xi \neq 0$ then for some $C_1,C_2>0$, $(x,\xi)$ belongs to the open conic set
\begin{equation*}
\Gamma = \{ (x,\xi) : \, C_1 |x| < |\xi| < C_2 |x| \}. 
\end{equation*}
Let $\fy \in C_c^\infty(\ro)$ be real-valued with $\supp \fy \subseteq [-1,1]$ and let $N \geq 0$.
For $|x|>1$ we have for any $N \in \no$ integrating by parts
\begin{equation*}
\begin{aligned}
\left| \xi^N V_\fy H(x,\xi) \right|
& = \left| \int_0^\infty (i \partial_y)^N ( e^{- i y \xi} ) \fy(y-x) \, dy \right| \\
& = \left| \int_0^\infty e^{- i y \xi}   \partial^N \fy(y-x) \, dy \right|
\lesssim 1, 
\end{aligned}
\end{equation*}
and since $\eabs{(x,\xi)} \lesssim \eabs{\xi}$ when $(x,\xi) \in \Gamma$, 
and $\eabs{(x,\xi)} \lesssim 1$ when $|x| \leq 1$ and $(x,\xi) \in \Gamma$, it follows that 
\begin{equation*}
\{ 0 \} \times (\ro \setminus 0) \subseteq 
WF(H) \subseteq \{ 0 \} \times (\ro \setminus 0) \cup (\ro \setminus 0) \times \{ 0 \}. 
\end{equation*}
The open conic set $\Gamma=\{ (x,\xi): \, x < - |\xi|\}$ contains $(-1,0) \in T^* \ro$. 
If $|x| \leq 1$ then $|(x,\xi)| \lesssim 1$ when $(x,\xi) \in \Gamma$, and if 
$|x| > 1$ and $(x,\xi) \in \Gamma$ then $x<-1$ and 
\begin{equation*}
V_\fy H(x,\xi) = \int_0^\infty e^{- i y \xi}  \fy(y-x) \, dy = 0.
\end{equation*}
This shows that $(-1,0) \notin WF(H)$ so we have 
\begin{equation}\label{inclusions1}
\{ 0 \} \times (\ro \setminus 0) \subseteq 
WF(H) \subseteq \{ 0 \} \times (\ro \setminus 0) \cup \ro_+ \times \{ 0 \}. 
\end{equation}
Finally, suppose $\widehat \fy (0) \neq 0$. For $x>0$ fixed and $t>0$ we have 
\begin{equation*}
\left| V_\fy H( t x, 0) \right|
= \left| \int_{-t x}^\infty \fy(y) \, dy \right| 
\longrightarrow |\widehat \fy (0)|, \quad t \longrightarrow +\infty, 
\end{equation*}
which shows that $\ro_+ \times \{ 0 \} \subseteq WF(H)$. 
Together with \eqref{inclusions1} this proves the result. 
\end{proof}

%%%%%%%%%%%%%%%%%%%%%%%%%%%%%

\end{document}